\definecolor{OliveGreen}{rgb}{0,0.6,0}
\definecolor{tempblue}{RGB}{36, 56, 231 }
\definecolor{tablegray}{RGB}{215, 219, 221 }
\numberwithin{equation}{section}  
\numberwithin{table}{section}
\numberwithin{figure}{section}
\def\R{\mathbb{R}}
\def\Rpp{\R_{++}}
\def\Sc{\mathbb{S}}
\def\Sn{\Sc^n}
\def\Snp{\Sc_+^n}
\def\Snpp{\Sc_{++}^n}
\def\Rk{\mathbb{R}^k}
\def\Rn{\mathbb{R}^n}
\def\Rnp{\mathbb{R}_+^n}
\def\Rnpp{\mathbb{R}_{++}^n}
\def\cref#1{{\normalfont(\ref{#1})}}
\newcommand*\colvec[1]{\begin{pmatrix}#1\end{pmatrix}}
\newenvironment{noteH}{\begin{quote}\small\sf \color{red} HenryW $\clubsuit$~}{\end{quote}}
\newenvironment{noteD}{\begin{quote}\small\sf \color{purple} David
$\diamondsuit$~}{\end{quote}}
\newenvironment{noteL}{\begin{quote}\small\sf \color{cyan} Leo
$\diamondsuit$~}{\end{quote}}
\def\cref#1{{\normalfont(\ref{#1})}}
\newtheorem{theorem}{Theorem}[section]
\newtheorem{example}[theorem]{Example}
\newtheorem{prop}[theorem]{Proposition}
\newtheorem{corollary}[theorem]{Corollary}
\newtheorem{remark}[theorem]{Remark}
\newtheorem{lemma}[theorem]{Lemma}
\newtheorem{fact}[theorem]{Fact}
\crefname{thm}{Theorem}{Theorems}
\Crefname{thm}{Theorem}{Theorems}
\crefname{problem}{Problem}{Theorems}
\Crefname{problem}{Problem}{Theorems}
\Crefname{assump}{Assumption}{Theorems}
\crefname{assump}{Assumption}{Theorems}
\crefname{conjecture}{Conjecture}{Theorems}
\Crefname{conjecture}{Conjecture}{Theorems}
\crefname{prop}{Proposition}{Propositions}
\Crefname{prop}{Proposition}{Propositions}
\crefname{cor}{Corollary}{Corollaries}
\Crefname{cor}{Corollary}{Corollaries}
\crefname{lem}{Lemma}{Lemmas}
\Crefname{lem}{Lemma}{Lemmas}
\theoremstyle{definition}
\crefname{defn}{definition}{definitions}
\Crefname{defn}{Definition}{Definitions}
\crefname{conj}{Conjecture}{Conjectures}
\Crefname{conj}{Conjecture}{Conjectures}
\crefname{remark}{Remark}{Remarks}
\Crefname{remark}{Remark}{Remarks}
\crefname{rmk}{Remark}{Remarks}
\Crefname{rmk}{Remark}{Remarks}
\crefname{example}{Example}{Examples}
\Crefname{example}{Example}{Examples}
\crefname{algorithm}{Algorithm}{Algorithms}
\Crefname{algorithm}{Algorithm}{Algorithms}
\crefname{align}{}{}
\Crefname{align}{}{}
\crefname{equation}{}{}
\Crefname{equation}{}{}
\newcommand{\textdef}[1]{\textit{#1}\index{#1}}
\newcommand{\bd}{\bar{d}}
\newcommand{\balpha}{\bar{\alpha}}
\newcommand{\cI}{{\mathcal I} }
\newcommand{\cD}{{\mathcal D} }
\newcommand{\Sk}{{\mathcal S^{k}}\,}
\newcommand{\A}{{\mathcal A}}
\newcommand{\bbm}{\begin{bmatrix}}
\newcommand{\ebm}{\end{bmatrix}}
\newcommand{\bem}{\begin{pmatrix}}
\newcommand{\eem}{\end{pmatrix}}
\newcommand{\beq}{\begin{equation}}
\newcommand{\beqs}{\begin{equation*}}
\newcommand{\bet}{\begin{table}}
\newcommand{\eeq}{\end{equation}}
\newcommand{\eeqs}{\end{equation*}}
\newcommand{\beqr}{\begin{eqnarray}}
\newcommand{\omegatwo}{\omega^{-2}}
\newcommand{\dbardhat}{\begin{pmatrix}\bar d\cr \hat d\end{pmatrix}}
\DeclareMathOperator{\size}{{size}}
\DeclareMathOperator{\cond}{{cond}}
\DeclareMathOperator{\norms}{{norms}}
\DeclareMathOperator{\normst}{{norms^2}}
\DeclareMathOperator{\normsa}{{norms^\alpha}}
\DeclareMathOperator{\normsti}{{norms^{-2}}}
\DeclareMathOperator{\nul}{null}
\DeclareMathOperator{\range}{range}
\DeclareMathOperator{\dist}{dist}
\DeclareMathOperator{\kvec}{{vec}}
\DeclareMathOperator{\adj}{{adj}}
\DeclareMathOperator{\blkdiag}{{blkdiag}}
\DeclareMathOperator{\diag}{{diag}}
\DeclareMathOperator{\DWT}{{DWT}}
\DeclareMathOperator{\Triu}{{Triu}}
\DeclareMathOperator{\triu}{{triu}}
\DeclareMathOperator{\Triuk}{{Triu_k}}
\DeclareMathOperator{\triuk}{{triu_k}}
\DeclareMathOperator{\Trirk}{{Trir_k}}
\DeclareMathOperator{\trirk}{{trir_k}}
\DeclareMathOperator{\Dt}{D_{+tk}}
\DeclareMathOperator{\TDiag}{{Diags_2}}
\DeclareMathOperator{\tdiag}{{diags_2}}
\DeclareMathOperator{\Diag}{{Diag}}
\DeclareMathOperator{\Ddiag}{{Ddiag}}
\newcommand{\nc}{\newcommand}
\nc{\arrow}{{\rm arrow\,}}
\nc{\Arrow}{{\rm Arrow\,}}
\nc{\BoDiag}{{\rm B^0Diag\,}}
\nc{\bodiag}{{\rm b^0diag\,}}
\nc{\Mm}{{\mathcal M}^{m} }
\nc{\Mmn}{{\mathcal M}^{mn} }
\nc{\Mnr}{{\mathcal M}_{nr} }
\nc{\Mnmr}{{\mathcal M}_{(n-1)r} }
\nc{\kwqqp}{Q{$^2$}P\,}
\nc{\kwqqps}{Q{$^2$}Ps}
\nc{\notinaho}{(X,S)\in \overline{AHO}(\A)}
\nc{\inaho}{(X,S)\in AHO(\A)}
\newcommand{\bea}{\begin{eqnarray}}%
\newcommand{\eea}{\end{eqnarray}}%
\newcommand{\beas}{\begin{eqnarray*}}%
\newcommand{\eeas}{\end{eqnarray*}}%
\newcommand{\Rmn}{\R^{m \times n}}%
\newcommand{\Rnn}{\R^{n \times n}}%
\newcommand{\Hnp}[1][]{\,\mathbb{H}_+^{\ifthenelse{\equal{#1}{}}{n}{#1}}}
\newcommand{\Hn}[1][]{\,\mathbb{H}^{\ifthenelse{\equal{#1}{}}{n}{#1}}}
\newcommand{\Hk}[1][]{\,\mathbb{H}^{\ifthenelse{\equal{#1}{}}{k}{#1}}}
\newcommand{\Dn}[1][]{\,\mathbb{D}^{\ifthenelse{\equal{#1}{}}{n}{#1}}}
\DeclareMathOperator*{\argmin}{argmin}
\newcommand{\Id}{\ensuremath{\operatorname{Id}}}
\title{
\href{http://orion.math.uwaterloo.ca/~hwolkowi/henry/reports/ABSTRACTS.html}{
The $\omega$-Condition Number:\\
Applications to 
Optimal Preconditioning\\
and\\
Low Rank Generalized Jacobian Updating
}
   \footnote{
Emails resp.: w2jung@uwaterloo.ca, david.torregrosa@ua.es,
hwolkowicz@uwaterloo.ca}
   \footnote{
   This report is available at URL:
   \href{https://www.math.uwaterloo.ca/\~hwolkowi/henry/reports/ABSTRACTS.html}{www.math.uwaterloo.ca/\~{ }hwolkowi/henry/reports/ABSTRACTS.html}
   }
}
\author{
\href{https://uwaterloo.ca/combinatorics-and-optimization/about/people/group/50}{%
Woosuk L. Jung}\thanks{
\href{http://www.math.uwaterloo.ca/co/}{Department of Combinatorics and Optimization}, University of Waterloo, 200 University Avenue West, Waterloo, ON N2L 3G1, Canada}
 \and
\href{https://cvnet.cpd.ua.es/curriculum-breve/es/torregrosa-belen-david/110216}{%
David Torregrosa-Bel\'en}\thanks{
\href{https://www.cmm.uchile.cl/}{Centro de Modelamiento Matemático}, Centro de Modelamiento Matemático (CNRS IRL2807), Universidad de Chile, Beauchef 851, Santiago, Chile} \and
\href{https://www.math.uwaterloo.ca/~hwolkowi/}
{Henry Wolkowicz}\textsuperscript{$\ddagger$}
}
\date{
Revising as of \today, \currenttime
}
\begin{document}
\maketitle

{\bf Key words and phrases:}
$\kappa, \omega, \omegatwo$-condition numbers,
preconditioning, generalized Jacobian, iterative methods, clustering of
eigenvalues

{\bf AMS subject classifications:} 15A12, 65F35, 65F08, 65G50,
49J52, 49K10, 90C32

\tableofcontents

\listoftables
\listoffigures

\begin{abstract}

Preconditioning is essential in iterative methods for solving linear
systems. It is also the implicit objective in updating 
approximations of Jacobians in optimization methods, e.g.,~in 
quasi-Newton methods. Motivated by the latter,
we study a nonclassic matrix condition number, the
\textdef{$\omega$-condition number}, $\omega$ for short. 
$\omega$ is the ratio of the arithmetic and
geometric means of the singular values, rather than largest and smallest.
Moreover, unlike the latter classical $\kappa$ condition number, 
$\omega$ is \emph{not} invariant under
inversion, an important point that allows one to recall that it is the
conditioning of the inverse that is important.

Our study is in the context of optimal conditioning for: (i) low rank updating of generalized Jacobians
arising in the context of nonsmooth Newton methods; and
(ii) iterative methods for linear systems: 
(iia) clustering of eigenvalues; (iib) convergence rates;
and (iic) estimating the actual condition of a linear system.
We emphasize that the simple functions in $\omega$ allow one
to exploit optimality conditions
and derive \emph{explicit} formulae for $\omega$-optimal preconditioners 
of special structure. Connections to partial Cholesky type
sparse preconditioners are made that modify the iterates
of Cholesky decomposition by including the entire diagonal at each iteration.
Our results confirm the efficacy of using the 
$\omega$-condition number compared to the classical 
$\kappa$-condition number.
\end{abstract}

\section{Introduction}
\label{sec:intro}

Preconditioning is essential in iterative and direct solutions of 
linear systems e.g.,~\cite{MR2169217}.
It is also the implicit objective in low rank updating of approximate
Jacobians in optimization, e.g.,~in quasi-Newton methods~\cite{DeWo:90}.
In this paper we study the $\omega$-condition number (abbreviated as
$\omega$), a nonclassic
matrix condition number that, for a positive definite matrix, is the
ratio of the arithmetic and geometric means of the eigenvalues, rather
than the largest and smallest eigenvalues of the classical
$\kappa$-condition number (abbreviated as $\kappa$). We emphasize that $\omega$ provides a more
average indication rather than a worst case measure of conditioning.
Moreover, unlike $\kappa$, $\omega$ is \emph{not} invariant under
inversion. This important property allows one to recall 
and exploit that it is the conditioning of the inverse that is important.

In particular, our original motivation is to find well conditioned,
$\omega$-optimal, low rank updates of the positive definite
generalized Jacobian that arises
in nonsmooth Newton methods e.g.,~\cite{CensorMoursiWeamsWolk:22}.
We use optimality conditions to find \emph{explicit formulae} for these
low rank updates. As well our work includes explicit formulae for
$\omega$-optimal diagonal and sparse upper triangular preconditioners.
We see that these latter relate to a sparse incomplete Cholesky factorization,
i.e.,~we modify the iterates of the Cholesky factorization by
including the entire diagonal at each iteration.
We then illustrate both the efficiency and effectiveness
of using $\omega$ compared to
$\kappa$ when solving 
positive definite linear systems.  In particular, our empirics show 
that $\omega$ is more effective in promoting the important
property of clustering of eigenvalues.

In addition, we show that $\omega$ can be evaluated
exactly following a Cholesky or LU factorization; and that it is
a better indication of the conditioning of a problem when compared to $\kappa$.

\subsection{Background and Preliminaries} 
In numerical analysis, a condition number of a matrix $A$ is the main
tool in the study of error propagation in the problem of solving the
linear equation $Ax=b$.  The linear system $Ax=b$ is said to be
well-conditioned when $A$ has a low condition number.
In particular, in the literature $\kappa(A)$ is used
as a (worst case) measure of the conditioning of a linear system $Ax=b$,
i.e.,~how much a solution $x$, the output, will change 
with respect to changes in the right-hand side $b$, the input:
\begin{equation}
\label{eq:condArelerrelinput}
\textdef{$\cond(A)$} := \frac {\|\Delta x\|/\|x\|}{\|\Delta b\|/\|b\|},
\end{equation}
e.g.,~\cite[Sect. 1.3]{stoer1980introduction}. In general, iterative
algorithms that solve $Ax=b$ require a large number of
iterations to achieve a solution with sufficient accuracy if the problem is
not well-conditioned, i.e.,~is ill-conditioned. 
For simplicity, in this paper,  we
restrict ourselves to $A$ positive definite and so $\kappa(A) =
\lambda_1(A)/\lambda_n(A) \, (=\kappa(A^{-1}))$.

\index{$\kappa$-condition number}

In order to improve the conditioning of a problem, preconditioners are 
employed for obtaining equivalent systems with better condition number.  
For example, in~\cite{davidon1975optimally} a preconditioner that minimizes 
$\kappa$ is obtained in the Broyden family of rank-two updates. 
Also, for applications to inexact Newton methods
see~\cite{BERGAMASCHI20111863,a13040100}, where it is emphasized
that the goal is to improve the \emph{clustering of eigenvalues} around $1$.
The $\omega$-condition number in particular uses \emph{all} the
eigenvalues, rather than just the largest and smallest as in the
classical $\kappa$. 
A recent survey on preconditioning is
given in~\cite{MR4175150}. We emphasize that though many heuristics are
given, the main measure of conditioning, e.g.~\cite{MR4175150},  is 
$\kappa$.\footnote{Links:
\href{https://nhigham.com/2019/01/23/who-invented-the-matrix-condition-number/}
{Who Invented the Matrix Condition Number?} and
\href{https://blogs.mathworks.com/cleve/2017/07/17/what-is-the-condition-number-of-a-matrix/}
{What is the Condition Number of a Matrix?}
}
However, in our view, $\kappa$ has the misleading property that it is
\textdef{inverse invariant}.\footnote{
In \href{https://blogs.mathworks.com/cleve/2017/07/17/what-is-the-condition-number-of-a-matrix/}
{What is the Condition Number of a Matrix?}, the derivation for $\kappa$
for $Ax=b$ 
with $\sigma_{\max},\sigma_{\min}$ largest and smallest singular values for $A$, respectively, 
is that $\|b\|\leq \sigma_{\max} \|x\|, \| \Delta b\|\geq 
\sigma_{\min}\|\Delta x\|$.
However, one can equally argue using
$\|\Delta x\|\leq \sigma_{\max}(A^{-1}) \|\Delta b\|, \| x\|\geq \sigma_{\min}(A^{-1}) \|b\|$
 to get $\frac{ \| \Delta x\| \|b \|}{\|x\| \|\Delta b\|} \leq
\kappa(A^{-1})$.}

\label{page:mainaim}
From the discussions in~\cite{MR1311636,MR929546}, the
(\emph{worst case}) condition number for the linear system $Ax=b$ is
\begin{equation}
\label{eq:condABworst}
\begin{array}{rcl}
\cond(A,b) &:=& \lim\limits_{\epsilon\downarrow 0} 
\sup\limits_{\stackrel {\|\Delta A\|\leq \epsilon \|A\|}
     {\|\Delta b\|\leq \epsilon \|b\|}}
\frac
          {\|(A+\Delta A)^{-1}(b+\Delta b)- A^{-1}b\|}
          {\epsilon \|A^{-1}b\|}
\\&=&
\kappa(A) + \frac {\| A^{-1}\|\|b\|} {\|A^{-1}b\|} \quad
    \left(=\kappa(A^{-1}) + \frac {\| A^{-1}\|\|b\|} {\|A^{-1}b\|}\right),
\end{array}
\end{equation}
where we have added the equivalence for the inverse invariance for
emphasis. The nonstandard condition number 
$\omega$ was proposed in~\cite{DeWo:90}. Interestingly enough, the authors
show that the inverse-sized BFGS and sized DFP~\cite{oren1974self} are
obtained as optimal quasi-Newton updates with respect to this measure. 
In contrast to the worst case condition number $\kappa$, 
we argue that $\omega$ is a more average type condition number and
provides a better measure for improving conditioning. Moreover, it
distinguishes between the conditioning of $A$ and $A^{-1}$.
We illustrate that $\omega$ presents advantages with
respect to the classic $\kappa$. Both are pseudoconvex
over the open convex cone of positive definite matrices, $\Snpp$; thus
a local minimum is a global minimum.
But, $\kappa$ is differentiable if, and
only if, both largest and smallest eigenvalues are singletons,
while $\omega$ is differentiable on all of $\Snpp$. 
This facilitates obtaining explicit formulae for
optimal preconditioners and avoids expensive calculations,  
see e.g.,~\cite{DeWo:90} and \Cref{sect:basic}, 
below.\footnote{                     
Since the original version of this paper was submitted, the recent report
\cite{gao2023scalable} (and many references therein) discusses
numerical scalable algorithms for $\kappa$-optimal diagonal preconditioning.
We have added relationships to this paper in this revised version.
In particular, we present an alternative algorithm as well as illustrate
that using the $\omega$-optimal formula in the positive definite case
has relatively no cost in evaluation, and is a better preconditioner.}   
Moreover, it is expensive to evaluate the classic
condition number~\cite{MR86f:65083} as it uses both
$\|A\|,\|A^{-1}\|$. For large scale, one often uses the $\ell_1$ approximation
in~\cite{MR86f:65083}. We show
that we can find the exact value of the $\omega$-condition number when a
Cholesky or LU factorization is done. Finally, we show that
$\omega$, and particularly $\sqrt{\omega(A^{-2})}$, denoted
\textdef{$\omegatwo$}, provides 
a significantly better estimate for the true conditioning of a linear
system. (Though $\omegatwo$ is currently for theoretical
purposes only as we do not yet have an efficient way of exploiting it
without calculating $A^{-1}$.)

\index{$\omegatwo=\sqrt{\omega(A^{-2})}$}
\index{$\sqrt{\omega(A^{-2})}=\omegatwo$}

\subsection{Notation}
\label{subsect:pre}

We denote: \textdef{$\Rn$} as the real Euclidean space of dimension $n$,
and \textdef{$\Rnp, \Rnpp$} as the nonnegative and 
positive orthants, respectively;
\textdef{$\Rmn$} as the space of $m\times n$ matrices; \textdef{$\Sn$}
as the space of $n\times n$ symmetric matrices;
\textdef{$\Snp$} and \textdef{$\Snpp$} for the cone
of positive semidefinite and positive definite $n\times n$ symmetric
matrices, respectively; and $A\succeq 0$ (resp., $\succ 0$ ) as
$A$ is in $\Snp$ (respectively, $\Snpp$).
We use  the Kronecker product and Hadamard (elementwise) product
$A \otimes B,C \circ D$, respectively, with the matrix to vector
columnwise vectorization \textdef{$x= \kvec(X)$}.
\index{Kronecker product, $A\otimes B$}
\index{$A\otimes B$, Kronecker product}
\index{Hadamard product, $A\circ B$}
\index{$A\circ B$, Hadamard product}

We use \textdef{$\Diag:\R^n\to\R^{n\times n}$} to denote the linear operator that maps a vector $v$  into the diagonal matrix   $\Diag(v)$  whose diagonal is $v$.  Its adjoint  operator is  denoted by \textdef{$\diag = \Diag^*$}.

For integers $t\geq s$, we let \textdef{$[s,t] = \{s,s+1,\ldots,t\}$}.
For a positive integer $k$, let \textdef{$[k] = [1,k]$}
and denote \textdef{$t(k) = k(k+1)/2$, triangular number}.
\index{triangular number, $t(k) = k(k+1)/2$}

For a differentiable function $f:\R^n\to\R$,
we use $\nabla f$ for the gradient. 
\label{page:RnR}
If         
the dimension $n = 1$,
we just write 
$f^{\prime}$ for the derivative of $f$. Given a nonempty open set  $\Omega\subseteq\R^n$, a function $f:\Omega \to\R$ is said to be \emph{pseudoconvex} on $\Omega$ if it is differentiable and
\label{page:transposegrad}
\[
\nabla f(x)^T (y-x) \geq 0 \Longrightarrow f(y) \geq f(x), \quad
\forall x, y\in\Omega.
\]
This implies that for an open convex set $\Omega$ and a
\textdef{pseudoconvex function} $f:\Omega\to\R$,   
we have: $\nabla f(x) = 0$ is  a necessary and sufficient condition for
$x$ to be a global minimizer of $f$ in $\Omega$, see,
e.g.,~\cite{Mang:69}.

\subsection{Outline and Main Results}
The goal of this paper is to show the efficacy of using $\omega$ when
compared to $\kappa$, i.e.,~to illustrate that $\omega$ outperforms
$\kappa$ as a condition number. 
And in particular, we illustrate this on
preconditioning and low rank updating.

\Cref{sect:basic,sect:omegaoptpreconds} introduce basic and 
new properties of $\omega$ as well as \emph{new}
explicit formulae for $\omega$-optimal preconditioners of special
structure: a new $\omega$-optimal diagonal preconditioner
is given in~\Cref{thm:optdiaginvprec}; and various triangular types are
included.
Efficiency and accuracy of computing $\omega$
is given in~\Cref{sect:efficevalomega}. Indeed
\emph{the condition number of the condition number is the condition
number} holds for $\kappa$ but not for $\omega$ indicating that
numerical calculations of ill-conditioned $\kappa$ can be very
inaccurate in contrast to $\omega$.

We include connections to preserving sparsity and to incomplete Cholesky
preconditioners. 
(Further explicit formulae of $\omega$-optimal preconditioners 
with special structure are given in~\Cref{sect:furhterpreconditioners}.)

In \Cref{subsec:erroranal} we empirically illustrate that
$\omega$ is a better indicator of conditioning for iterative solutions
of linear equations. Moreover,~\Cref{rem:condofcond} provides the
justification for using \textdef{$\omegatwo:=\sqrt{\omega(A^{-2})}$} 
as a measure and emphasizing
the advantage over $\kappa$ of not being inverse invariant. This includes
empirical results for better clustering of eigenvalues, 
\Cref{fig:smoothingeigs}. Though as mentioned above, $\omegatwo$ is
currently only for theoretical purposes. 

In \Cref{sec:genJac}, we derive $\omega$-optimal conditioning for
low rank updates of positive definite matrices. These updates
often arise in the construction of generalized Jacobians.  

Numerical results are in \Cref{section:NumericalTests}. We 
use the linear equations that involve positive definite matrices
as well as the generalized Jacobians for our original motivation.
We empirically illustrate  that reducing the
$\omega$-condition number improves the performance of
iterative methods for solving these linear systems. 

Conclusions are provided in \Cref{sect:conclusion}.

\section{Properties and Optimal Preconditioning: $\omega$ vs $\kappa$}
\label{sect:numerevalomega}

We now introduce basic and new properties of $\omega$,
and study the efficiency of its numerical evaluation. 
In addition, we empirically compare its effectiveness with $\kappa$ for
preconditioning, clustering of eigenvalues, and in estimating the 
actual conditioning of positive definite linear systems.

\label{page:5preconds}
In particular, we derive  the following
explicitly found optimal $\omega$-preconditioners (scalings):
(i) diagonal~\cref{eq:optdiagscal};
(ii) block diagonal~\cref{eq:blkdiagscal};
(iii) incomplete upper triangular~\cref{eq:omegaoptincchol};
(iv) lower triangular two diagonal~\cref{eq:omegaobj};
(v) upper triangular  diagonal~\cref{eq:uppertriag2diag}. 
Specifically, preconditioners (i)-(iii) maintain their sparsity
properties after matrix inversion. We include empirical comparisons
with state-of-the-art sparse incomplete Cholesky preconditioners.

\subsection{Basic Properties and $\omega$-Optimal Diagonal Preconditioner}
\label{sect:basic}
For iterative solutions of linear systems
a preconditioner $S$ is often essential, e.g.,~for preconditioned
conjugate gradients for $Ax=b, A\succ 0$, we
solve $(S^TAS) \tilde x =\tilde b = S^T b,\, x=S\tilde x$, 
see e.g.,~\cite{MR3638573,MR2169217,golvl}. Moreover, 
it is known that the simple scaling diagonal preconditioner using
the norms of the columns of $A$ is the 
optimal diagonal preconditioner with respect to $\omega$ and
is efficient in practice, see~\cite{DeWo:90,PiniGambolati:90}, 
i.e.,~$\omega$ validates the use of this specific diagonal
preconditioner.\footnote{In \cite{gao2023scalable} the motivation for
numerically finding diagonal $\kappa$-optimal preconditioners was the
lack of theoretical validation. See also the near optimality results
in~\cite{MR40:6760}. Validation using $\omega$ is now provided in
\Cref{prop:precond}, \Cref{item:optdiagprecond}. However, an improved
diagonal preconditioner is provided in~\Cref{thm:optdiaginvprec}.
}
Various preconditioners based on (partial) factorizations of $A$, are
compared in~\cite{MR3638573}. One is the QR-factorization. We note that
scaling columns is an essential part of a QR-factorization. We see below
that our $\omega$-optimal preconditioners are related to a modified
QR-factorization (Cholesky for positive definite systems). 
Moreover, convergence rates of iterative methods are
correlated to clustering of eigenvalues of $A^TA$,
see e.g.,~\cite{MR98j:65023}. We see below in \Cref{subsec:erroranal} that the
$\omega$-optimal preconditioners promote this property better than those
for $\kappa$.

The  optimal diagonal preconditioner is extended 
to the block diagonal case in~\cite{KrukDoanW:10}.
We now summarize these and other  basic properties of
$\omega$ in the following~\Cref{prop:precond}.
We include a proof of \Cref{prop:precond}, \Cref{item:optdiagprecond},
that is different than that provided in~\cite{DeWo:90} so as to
emphasize the extension to new  formulae
for  $\omega$-optimal preconditioners  
in~\Cref{sect:omegaoptpreconds,sect:optDponeprecondij,sect:lowerTrTwoDiag}.

\begin{prop}[{\cite{DeWo:90,KrukDoanW:10}}]
\label{prop:precond}
The following statements hold.
\begin{enumerate}
\item 
\label{item:globminomega}
$\omega$ is pseudoconvex \index{pseudoconvex function} on the 
cone of symmetric positive definite matrices;
thus every stationary point is a global minimizer of $\omega$.
\item 
\label{item:optdiagprecond}
Let $V$ be a full rank $m \times n$ matrix, $n \leq m$.
Then the optimal column scaling that minimizes $\omega$ is given by:
\begin{equation}
\label{eq:optdiagscal}
d^* = (d_i^*) = \argmin_{d\in \Rnpp}  
      \omega (({V}{\Diag(d)})^{T}({V}{\Diag(d)})),
\quad d_i^* =\frac{1}{\| {V}_{:,i} \|}, \,i\in[n],
\end{equation}
\label{page:Rnpp}
where ${V}_{:,i}$ is the $i$-th column of ${V}$.
\item 
\label{item:optblockdiagprecond}
Let ${V}$ be a full rank $m \times n$ matrix, $n \leq m$, with block
structure ${V}=\begin{bmatrix} {V}_1&{V}_2&\ldots &{V}_k\end{bmatrix}$, ${V}_i \in \R^{m \times n_i}$.
Then \emph{an} optimal corresponding block diagonal scaling 
\[
{D}=\begin{bmatrix} 
        {D}_1&0&0&\ldots &0
\cr
        0& {D}_2&0&\ldots &0
\cr
        \ldots & \ldots &\ldots &\ldots &\ldots 
\cr
        0& 0&0&\ldots &{D}_k
\end{bmatrix}, \quad {D}_i \in \R^{n_i\times n_i},
\]
that minimizes the measure
$\omega$, i.e.,
\begin{equation}
\label{eq:blkdiagscal}
\min ~ \omega (({V}{D})^{T}({V}{D})),
\end{equation}
over ${D}$ block diagonal, is given by the factorization
\[
{D}_i{D}_{i}^T=\{{V}_i^T\,{V}_i\}^{- 1}, \quad i\in[k].
\]
\end{enumerate}
\end{prop}
\begin{proof}
The results are proved in~\cite{DeWo:90,KrukDoanW:10}. We provide a new
proof of \Cref{item:optdiagprecond} as it leads to different extensions
below. Moreover, this proof illustrates the ease in differentiating
	$\omega$ and applying to derive \emph{explicit} formulae.

Let $d:=\diag(D), W:= V^TV, w = \diag(W)$ and note that 
\[
\begin{array}{rcl}
 \omega(d) := \omega (({V}{\Diag(d)})^{T}({V}{\Diag(d)})) 
&=&
 \frac 1{n\det(V^TV)^{1/n}}   
           \frac {\langle w, d\circ d\rangle}
          {\det(D)^{2/n}}
\\&=:&
 K \frac {\sum_{i=1}^n w_i d_i^2}
         {\prod_{i=1}^n  d_i^{2/n}}
\\&=:&
 K \frac {f_w(d)}{g(d)},
\end{array}
\]
thus defining the constant $K>0$ and functions $f_w,g:\Rn_{++} \to \Rpp$.
The reason for including this proof is to emphasize that $V$ only
appears in the numerator $f_w$ of the function to be minimized as the
denominator involves only $d$.

We now differentiate this pseudoconvex function with respect to $d_i$ :
\[
\begin{array}{rcl}
\frac {\partial \omega(d)}{\partial d_i}
&=&
\frac K{g(d)^2}
\left( g(d)2w_id_i - f_w(d) \frac 2n g(d) \frac 1{d_i} \right) 
\\&=&
\frac {2K}{g(d)}
\left( w_id_i  - \frac 1n f_w(d)   \frac 1{d_i} \right) 
\\&=&
\frac {2K}{g(d)}
\left( \frac 1{d_i}  - \frac 1n f_w(d)   \frac 1{d_i} \right) 
\\&=&
0,
\end{array}
\]
since $w_i = \|V_{:,i}\|^2 = 1/d_i^2\implies f_w(d) = n$.
\end{proof}

We now derive properties for the (square root)
of the $\omega$-condition number of $A^{-2}$, denoted $\omegatwo$.
The motivation for this is introduced below in~\cref{eq:condAinv2}.
\begin{theorem}
\label{thm:optdiaginvprec}
Let $A\in \Snpp$ and denote the Hadamard (elementwise)
product $B:= A^{-1}\circ A^{-1}  \in  \Snpp \cap \R^{n\times n}_+$.
Let $\bar d\in \Rnpp, \bar D = \Diag(\bar d)$, be the solution of
\[
B\bar d = \diag(\bar D^{-1})>0,
\]
and let
\[
D = \bar D^{1/2}, \, d=\diag(D).
\]
Then, $\bar d^T B \bar d = n$. Moreover, $d$ provides the
$\omegatwo$-optimal scaling, i.e.,~the $\omega$-optimal diagonal scaling 
$D=\Diag(d)$ of $A$ with respect to $A^{-2}$:
\[
d = \argmin_{D = \Diag(d)\succ 0} \omega ( D A^{-1} D D A^{-1} D ).
\]
The corresponding optimal scaling (preconditioning) for solving $Ax=b$,
with respect to the motivation for using $A^{-2}$ in~\cref{eq:condAinv2}, is
\[
(D^{-1}AD^{-1})(Dx) = D^{-1}b.
\]
\end{theorem}
\begin{proof}
First note $\bar d^TB\bar d = \bar d^T \diag \bar D^{-1} = n$. 
From~\cref{eq:condAinv2}, to improve conditioning for the system
$Ax=b$, we want to decrease $\omega(A^{-2})$. We
restrict to a diagonal scaling and try to find:
\begin{equation}
\label{eq:diagscalinv}
\begin{array}{rcl}
d  
&=&
\argmin\limits_{D = \Diag(d)\succ 0} \omega ( DA^{-1} DD A^{-1} D)
\\&=&
\argmin\limits_{D = \Diag(d)\succ 0} \omega ( A^{-1} D^2 A^{-1} D^2) 
\\&=&
\argmin\limits_{\bar D = \Diag(\bar d)\succ 0} \frac
 {\frac 1n\trace ( A^{-1} \bar D A^{-1} \bar D) }
 {\det ( A^{-1} \bar D A^{-1} \bar D) ^{1/n}},\quad \bar D = D^2
\\&=&
       \argmin\limits_{\bar D = \Diag(\bar d)\succ 0} 
\frac {\det(A)^{\frac 2n}}n
\frac
          {\trace ( A^{-1} \bar D A^{-1} \bar D) }
	  {\det ( \bar D^{2/n})}.
\\&=&
       \argmin\limits_{\bar D = \Diag(\bar d)\succ 0} 
\frac
          {\trace ( A^{-1} \bar D A^{-1} \bar D) }
	  {\det ( \bar D^{2/n})}.
\end{array}
\end{equation}
We use  the Kronecker product notation and Hadamard product
notation $\otimes,\circ$, with the vectorization
$\kvec(\cdot)$,and obtain
\[
\trace  A^{-1} \bar D A^{-1} \bar D = \kvec(\bar D)^T A^{-1}\otimes
A^{-1} \kvec(\bar D)
   = \bar d^T A^{-1}\circ A^{-1}\bar d =: \bar d^TB\bar d =:\bar f(\bar d),
\]
thus defining the positive definite matrix $B$ and quadratic form $\bar
f(\bar d)$. We let 
\[
\bar g(\bar d) = \prod_{i=1}^{n}\bar d_i^{2/n} = \det(\bar D^{2/n}).
\]
Then 
	\[
	\nabla \bar f(\bar d) = 2B\bar d,   \quad 
\nabla  \bar g(\bar d) = \begin{pmatrix} \frac{2g(d)}{nd_i}\end{pmatrix} 
             = \frac{2\bar g(\bar d)}{n}\bar D^{-1} e.
	\]

From the minimization problem in~\cref{eq:diagscalinv}, we want the
stationary point
\[
\begin{array}{rcl}
0 
&=&
\frac 1{\bar g(\bar d)^2}
\left(\bar g(\bar d) \nabla \bar f(\bar d) -
	  \bar f(\bar d) \nabla \bar g(\bar d)\right)
\\&=&
2\bar g(\bar d) B\bar d -
	2\frac   {\bar f(\bar d) \bar g(\bar d)}n \bar D^{-1}e
\\&=&
B\bar d - \frac {\bar f(\bar d) }n \bar D^{-1}e.
\end{array}
\]
We normalize and get the two equations
\[
\bar f(\bar d) = n, \quad B\bar d = \diag(\bar D^{-1}).
\]
\end{proof}

\begin{remark}
Though currently only of theoretical interest due to dependence on
having $A^{-1}$, we note that 
solving for $\bar d$ in~\Cref{thm:optdiaginvprec}
can be done by e.g.,~Newton's method. If $B$ is diagonal, then an explicit 
solution is $\bar d_i := \frac 1{\sqrt {B_{ii}}}$.
Note that $B$ diagonal holds if, and only if, $A$ is diagonal and then
the optimal diagonal preconditioner is 
\[
\bar D = \Diag(\bar d) =  \sqrt{B^{-1}} = A.
\]
Therefore, the optimal preconditioner for $A$ is
$\bar D^{-1/2} = A^{-1/2}$ which agrees with our optimal $\omega$
preconditioner.
In general, with $\alpha := \bar d^TB\bar d$, then
 $\sqrt \frac n\alpha \bar d$
provides a good starting point for Newton's method, as
it is highly likely that the matrix $B$ is significantly diagonally dominant.
We solve
\[
F(d) := \Diag(d)Bd - e=0.
\]
The Jacobian with the matrix representation is
\[
F^\prime(d)(\Delta d)=  \Diag(d)B\Delta d +  \Diag(\Delta d)Bd
	  = \begin{bmatrix} \Diag(d)B+  \Diag(Bd) \end{bmatrix} (\Delta d ).
\]
In our experiments Newton's method always converged in a few
iterations, in fact $4$ iterations independent of 
$n$.\footnote{This could be a result 
of monotonicity arising from the nonnegativity of both $F,F^\prime$.
It is still an open question whether $d$ can be found efficiently
without explicitly finding $A^{-1}$ first.}

\end{remark}

We now include the gradients of the condition numbers for use in the
definitions below. In the case of $\kappa$, for simplicity and to avoid the use of subgradients, we assume that the largest and smallest eigenvalues are singletons.
\begin{lemma}
\label{lem:gradconds}
Let $A\in \Snpp$ with eigenvalues $\lambda _1 \geq \lambda_2\geq \ldots
\geq \lambda_{n-1} \geq \lambda_n$, with corresponding orthonormal
eigenvectors $v_1,\ldots,v_n$. Then:
\begin{enumerate}
\item 
\[
\begin{array}{rcl}
\nabla \omega(A) &=& \frac 1{n \det(A)^{1/n}}
\left( I - \frac {\trace A}n  A^{-1}\right)
\, \text{ is indefinite},
\\&& \qquad
\text{  with  } \|\nabla \omega(A)\| = \frac 1{n \det(A)^{1/n}}
\max \left\{
1- \frac {\trace A}{n\lambda_1},
\frac {\trace A}{n\lambda_n}-1
\right\}.
\end{array}
\]
\item In addition, assume that the largest and smallest eigenvaules of $A$ are singletons, i.e., $\lambda _1 > \lambda_2\geq \ldots
\geq \lambda_{n-1} > \lambda_n$. Then:
\[
\nabla \kappa(A) = \frac 1{\lambda_n}
\left( v_1v_1^T - \kappa(A) v_nv_n^T\right),  
\, \text{ is indefinite},\text{   with  }
\|\nabla \kappa(A)\| =  \frac {\kappa(A)}{\lambda_n}.
\]
\end{enumerate}
\end{lemma}
\begin{proof}
\begin{enumerate}
\item
The gradient is
\[
\begin{array}{rcl}
\nabla \omega(A)
&=&
 \frac 1{n \det(A)^{2/n}}
\left(\det(A)^{1/n} I - \frac {\trace A}n  
  \det(A)^{\frac 1n - 1}\adj A\right)\succ 0;
\\&=&
 \frac 1{n \det(A)^{2/n}}
\left(\det(A)^{1/n} I - \frac {\trace A}n  
  \det(A)^{\frac 1n - 1}\adj A\right)\succ 0;
\\&=&
 \frac 1{n \det(A)^{1/n}}
\left( I - \frac {\trace A}n  A^{-1}\right),
\end{array}
\]
where $\adj A$ is the adjunct, the matrix of cofactors. The last
expression follows from $A^{-1} = \frac 1{\det(A)}\adj A$.
The  indefiniteness and norm follow from:
\[
\begin{array}{rcll}
\lambda_{\max}(I-\frac {\trace A}n A^{-1}) 
& = &
\max_{\|x\|=1} x^T(I-\frac {\trace A}n A^{-1}) x
\\& = &
1-  \frac {\trace A}n\min_{\|x\|=1}    x^TA^{-1} x
\\& = &
1- \frac {\trace A}{n\lambda_1}, & \text{with attainment at  } x = v_1,
\\& > & 0;
\end{array}
\]
\[
\begin{array}{rcll}
\lambda_{\min}(I-\frac {\trace A}n A^{-1}) 
& = &
\min_{\|x\|=1} x^T(I-\frac {\trace A}n A^{-1}) x
\\& = &
1+  \frac {\trace A}n\min_{\|x\|=1}  ( - x^TA^{-1} x)
\\& = &
1-  \frac {\trace A}n\max_{\|x\|=1}    x^TA^{-1} x
\\& = &
1- \frac {\trace A}{n\lambda_n}, & \text{with attainement at  } x = v_n,
\\&<& 0.
\end{array}
\]
\item
Since the eigenvalues are singletons, they are differentiable with
gradients $v_1v_1^T, v_nv_n^T$, respectively. The result follows from
the definitions of the gradient of the fractional function $\kappa$,
the spectral norm, and orthonormality of the eigenvectors.
\end{enumerate}
\end{proof}

\subsubsection{Efficiency and Accuracy for Evaluating $\omega,\kappa$}
\label{sect:efficevalomega}
Since eigenvalue decompositions can be expensive,  
one issue with $\kappa({A})$ is how to estimate it 
efficiently when the size of matrix ${A}$ is large.
A survey of estimates and, in particular, estimates using
the $\ell_1$-norm, is given in \cite{MR86f:65083,Hig:87}. 
Extensions to sparse matrices and block-oriented generalizations are 
given in \cite{grle:81,MR1780268}. 
Results from these papers form the basis of the 
\href{http://www.mathworks.com/help/techdoc/ref/condest.html}{\emph{condest}}
command in \textsc{Matlab}. 
More recently~\cite{gao2023scalable} 
deals with scalable methods for finding the $\kappa$-optimal diagonal
preconditioner.
This illustrates the difficulty in accurately estimating $\kappa(A)$.

On the other hand,
the measure $\omega({A})$ can be calculated using the trace and determinant
function that do not require eigenvalue decompositions. Derivatives are
given in \Cref{lem:gradconds} above.\footnote{Since
the first version of this paper we have been made aware of the
new CVX \textsc{Matlab} function  
\href{https://github.com/cvxr/CVX/blob/master/functions/det_rootn.m}
{\textdef{$det\_rootn$}} that calculates
$\det(A)^{1/n}$, the denominator of $\omega$, using the Cholesky decomposition.}
However, for large $n$, the determinant is also numerically
difficult to compute as it could easily result in an overflow
$+\infty$ or $0$ due to the limits of finite precision arithmetic,
e.g.,~if the order of $A$ is $n=50$ and the eigenvalues 
$\lambda_i=.5, \forall i$, then the determinant $.5^n$ is zero to
machine precision. A similar problem arises for e.g.,~$\lambda_i = 2,
\forall i$ with overflow.
In order to overcome this problem, we take the $n$-th root first and
then the product, i.e.,~we define the value obtained from the spectral
factorization as
\[
\textdef{$\omega_{\rm eig}(A)$} = \frac 
{\sum_{i=1}^n\lambda_i(A)/n}{\prod_{i=1}^n(\lambda_i(A)^{1/n})}.
\]
We now let $A=R^TR=LUP$ denote the Cholesky and $LU$ factorizations,
respectively, with appropriate permutation matrix $P$.
We assume that $L$ is unit lower triangular. Therefore,
\begin{equation}
\label{eq:detchol}
\det(A)^{1/n} = \det(R^TR)^{1/n} = \det(R)^{2/n} = 
\prod_{i=1}^n\left(R_{ii}^{2/n}\right).
\end{equation}
Similarly, 
\begin{equation}
\label{eq:detlu}
\det(A)^{1/n} = \det(LUP)^{1/n} = 
\prod_{i=1}^n\left(\left|U_{ii}\right|^{1/n}\right).
\end{equation}
Therefore, we find $\omega(A)$ with numerator $\trace(A)/n$ and denominator
given in \cref{eq:detchol} and \cref{eq:detlu}, respectively:
\[
\textdef{$\omega_R(A)$} = \frac 
{\trace(A)/n}
{\prod_{i=1}^n\left(R_{ii}^{2/n}\right)}, \quad
\textdef{$\omega_{LU}(A)$} = \frac 
{\trace(A)/n}
{\prod_{i=1}^n\left(\left|U_{ii}\right|^{1/n}\right)}.
\]

\Cref{table:wcondtimes,table:wcondprec} provide comparisons
on the time and precision from the three different factorization methods. Each
column presents different order of $\kappa$-condition number, while each
row corresponds to different decompositions with different size $n$ of
the problem. 
We form the random matrix using $A=QDQ^T$ for random orthogonal $Q$ and
positive definite diagonal $D$. We then symmetrize $A \leftarrow
(A+A^T)/2$ to avoid roundoff error in the multiplications.
Therefore, we consider the evaluation using $D$ as the \emph{exact
value} of $\omega(A)$, i.e.,
\[
\omega(A) = \frac{\sum_{i=1}^n\left( D_{ii}\right) /n} {\prod_{i=1}^n \left( D_{ii}^{1/n}\right)}.
\]
\Cref{table:wcondprec} shows
the absolute value of the difference between the exact $\omega$-condition number and the
$\omega$-condition numbers obtained by making use of each factorization,
namely, $\omega_{\rm eig}$,  $\omega_{R}$ and $\omega_{LU}$.
Surprisingly, we see that both the Cholesky and LU decompositions give
better results than the eigenvalue decomposition.
\begin{table}[h]
\resizebox{\columnwidth}{!}{%
	\begin{tabular}{|c|c|c|c|c|c|c|c|c|c|} 
 \hline 
$n$ & Fact.  & order $\kappa$ 1e2& order $\kappa$ 1e3& order $\kappa$ 1e4& order $\kappa$ 1e5& order $\kappa$ 1e6& order $\kappa$ 1e7& order $\kappa$ 1e8& order $\kappa$ 1e9\\ 
 \hline 
\multirow{3}{*}{500} & eig & 5.5267e-02& 5.7766e-02& 5.2747e-02& 5.9256e-02& 6.0856e-02& 6.2197e-02& 5.5592e-02& 5.7626e-02\\ 
 \cline{2-10} 
 & R & 1.1218e-02& 8.0907e-03& 7.5172e-03& 8.4705e-03& 9.2774e-03& 8.5553e-03& 8.1462e-03& 7.9027e-03\\ 
 \cline{2-10} 
 & LU & 2.2893e-02& 1.8159e-02& 1.8910e-02& 2.0902e-02& 2.0057e-02& 2.0308e-02& 1.9060e-02& 1.8879e-02\\ 
 \hline 
 \multirow{3}{*}{1000} & eig & 3.0664e-01& 2.8968e-01& 2.6095e-01& 2.7796e-01& 5.7083e-01& 5.9007e-01& 5.8351e-01& 5.9630e-01\\ 
 \cline{2-10} 
 & R & 2.9328e-02& 2.8339e-02& 2.7869e-02& 3.1909e-02& 5.8628e-02& 6.0873e-02& 6.2429e-02& 6.1074e-02\\ 
 \cline{2-10} 
 & LU & 7.5011e-02& 7.2666e-02& 7.0497e-02& 7.6778e-02& 1.6313e-01& 1.7313e-01& 1.7666e-01& 1.7326e-01\\ 
 \hline 
 \multirow{3}{*}{2000} & eig & 3.4794e+00& 3.4804e+00& 3.1916e+00& 3.4386e+00& 3.4235e+00& 3.4766e+00& 3.2327e+00& 3.3704e+00\\ 
 \cline{2-10} 
 & R & 3.5644e-01& 3.5989e-01& 2.9556e-01& 3.6375e-01& 3.5847e-01& 3.5972e-01& 3.2629e-01& 3.4227e-01\\ 
 \cline{2-10} 
 & LU & 9.0136e-01& 9.0537e-01& 7.1161e-01& 8.7445e-01& 8.6420e-01& 8.8027e-01& 8.1990e-01& 8.1383e-01\\ 
 \hline 
 \end{tabular}
}
\caption{CPU sec. for evaluating $\omega(A)$, 
	averaged over the same 10 random instances; 
	eig, R, LU are eigenvalue, Cholesky, LU decompositions, respectively.
}
\label{table:wcondtimes}
\end{table}
\begin{table}[h]
\resizebox{\columnwidth}{!}{%
	\begin{tabular}{|c|c|c|c|c|c|c|c|c|c|} 
 \hline 
$n$ & Fact.  & order $\kappa$ 1e2& order $\kappa$ 1e3& order $\kappa$ 1e4& order $\kappa$ 1e5& order $\kappa$ 1e6& order $\kappa$ 1e7& order $\kappa$ 1e8& order $\kappa$ 1e9\\ 
 \hline 
\multirow{3}{*}{500} & eig & 1.5632e-13& 2.7853e-12& 2.2618e-10& 1.2695e-08& 8.9169e-07& 5.4109e-05& 2.2610e-03& 1.7349e-01\\ 
 \cline{2-10} 
 & R & 1.7053e-13& 2.5580e-12& 1.0039e-10& 1.1339e-08& 4.9818e-07& 2.6470e-05& 1.3173e-03& 1.6217e-01\\ 
 \cline{2-10} 
 & LU & 1.5987e-13& 2.4585e-12& 1.0652e-10& 1.1987e-08& 5.1592e-07& 2.1372e-05& 1.3641e-03& 1.4268e-01\\ 
 \hline 
 \multirow{3}{*}{1000} & eig & 2.1316e-13& 2.1032e-12& 8.7653e-11& 4.6271e-09& 3.1477e-07& 1.9602e-05& 9.9290e-04& 7.6469e-02\\ 
 \cline{2-10} 
 & R & 4.2633e-13& 1.5632e-12& 4.2235e-11& 3.9297e-09& 2.9562e-07& 1.1498e-05& 9.1506e-04& 5.3287e-02\\ 
 \cline{2-10} 
 & LU & 4.4054e-13& 1.4850e-12& 3.7858e-11& 3.8287e-09& 2.7390e-07& 1.3820e-05& 6.0492e-04& 4.8568e-02\\ 
 \hline 
 \multirow{3}{*}{2000} & eig & 2.4336e-13& 4.1780e-12& 4.2019e-10& 2.0080e-08& 7.7358e-07& 6.4819e-05& 5.5339e-03& 3.7527e-01\\ 
 \cline{2-10} 
 & R & 4.3698e-13& 2.0819e-12& 5.0704e-11& 2.3442e-09& 1.8376e-07& 8.9575e-06& 5.5255e-04& 4.8842e-02\\ 
 \cline{2-10} 
 & LU & 4.3165e-13& 2.2595e-12& 2.3249e-11& 2.5057e-09& 1.5020e-07& 6.0479e-06& 5.4228e-04& 4.4205e-02\\ 
 \hline 
 \end{tabular}
}
\caption{Precision of evaluation of $\omega(A)$
	averaged over the same 10 random instances.
	eig, R, LU are eigenvalue, Cholesky, LU decompositions, respectively.
}
\label{table:wcondprec}
\end{table}

Moreover, \Cref{fig:compareaccuracy} illustrates a comparison of
accuracy in evaluations of $\omega,\kappa$. We use one positive
definite matrix with spectral decomposition $A = QDQ^T$, and $n =
1000, density = 1e-4$ with $\kappa = 200$. We use perturbations
of the eigenvalues $\|D(\epsilon) - D\|/\|D\|=1e-8$ and reform
$A(\epsilon) = QD(\epsilon)Q^T$.  \Cref{fig:compareaccuracy} clearly shows 
that $\omega$ is calculated more accurately as the ill-conditioning grows.
This relates to the \emph{condition number of the condition number} in
\Cref{rem:condofcond}.

\begin{figure}[h]\centering
\includegraphics[width = 0.5\textwidth]{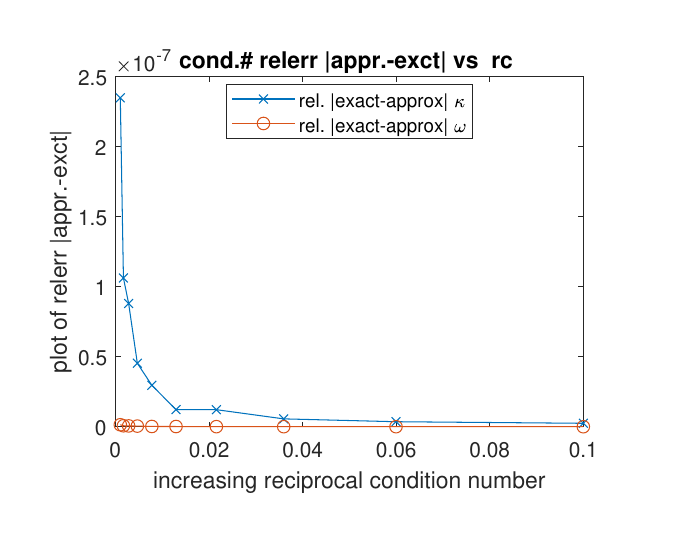}\hfill
\includegraphics[width = 0.5\textwidth]{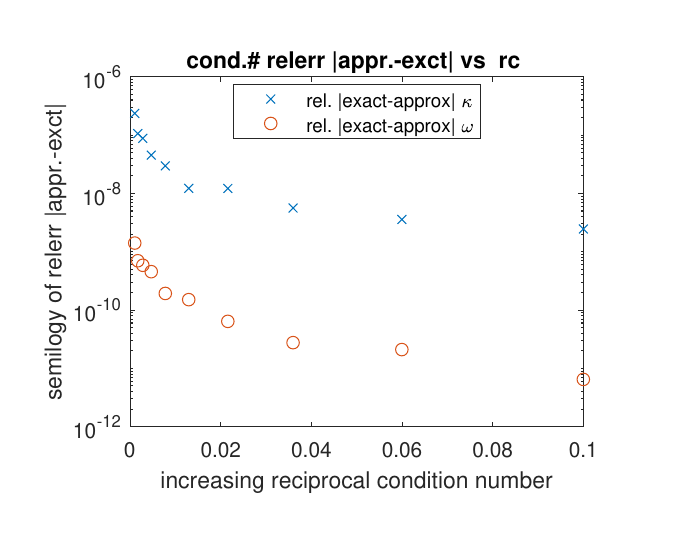}
\caption{Comparing accuracy under perturbations for calculating
	$\omega,\kappa$.
}
\label{fig:compareaccuracy}
\end{figure}
\begin{remark}
\label{rem:condofcond}
Moreover, if we consider $b$ as the input to a function $G$ with output
$x$, then a Taylor type argument gives to first order condition number
as in~\cref{eq:relresratio}
\begin{equation}
	\label{eq:condofcond}
	\cond(G) = \frac {\|b\|}{\|G(b)\|} \frac {\|\Delta G\|}{\|\Delta b\|}
	\cong
	\| \nabla G(b)\| \frac {\|b\|}{\|G(b)\|},
\end{equation}
a first order approximation for the condition number of $G$. Therefore,
if $G$ is one of $\kappa,\omega$, we get the \textdef{condition number
of the condition number}, see e.g.,~\cite{MR1311636,MR88i:15014} and 
the related result that for $\kappa$,
\label{page:condofcond}
\textdef{the condition number of the condition number is the
	 condition number}.  
We have observed empirically that the condition number of $\omega$ is
significantly smaller than the condition number of $\kappa$.

Let $G(\cdot) := A^{-1}(\cdot)$.
Let $v_i$ be orthonormal eigenvectors of $A$ and
$b = \sum \beta_iv_i, \Delta b = \sum \delta_iv_i$
and by abuse of notation 
\[
\|G(b)\|^2 = \langle \beta^2,\frac 1{\lambda^2}\rangle = \sum_i 
         \frac {\beta_i^2}{\lambda_i^2},\qquad
\|G(\Delta b)\|^2 = \langle \delta^2,\frac 1{\lambda^2}\rangle=
         \sum_i \frac {\delta_i^2}{\lambda_i^2},
\]
We now consider the conditioning for the linear system $Ax=b$.
From \cref{eq:condArelerrelinput} (squared) we have:
\begin{equation}
\label{eq:condAinv2}
\begin{array}{rcl}
\cond(A)^2
&=&
 \frac {\|b\|^2}{\|G(b)\|^2} \frac {\|G(\Delta b)\|^2}{\|\Delta b\|^2}
\\&=&
 \frac {\|b\|^2}{\|\Delta b\|^2} 
    \frac {\langle \delta^2, \frac 1{\lambda^2}\rangle }
	 {\langle \beta^2,  \frac 1{\lambda^2}\rangle }
\\&\approx &
 \frac {\|b\|^2}{\|\Delta b\|^2} 
	\frac {\| \delta\|^2 \trace A^{-2} }
	 {\| \beta\|^2  \prod_i \frac1{\lambda_i^{\frac 2n}}}
\\&=&
	 \omega(A^{-2})
\end{array}
\end{equation}
where the extended/strengthened AGM with an expected value gives the
last approximation, i.e. we divide numerator and denominator by $n$ and
then apply the generalized AGM inequality, e.g.,~\cite[page 6]{MR2910302}.

For example, to make $\omega(A^{-2})$ small we can use a diagonal
scaling on left and right 
\[
\omega(DA^{-1}DDA^{-1}D).
\]

The above suggests that we should use  the 
measure $\sqrt {\omega (A^{-2})}$
rather than $\omega(A)$. the numerical tests appear to confirm this as
well. Moreover, from \cite[Prop. 2.1 (i)]{DeWo:90} and the inverse
invariance  of $\kappa$ we have
\[
1\leq \sqrt {\omega (A^{-2})} \leq \sqrt{\kappa(A^{-2})} =\kappa(A)
\leq  \sqrt{4\omega (A^{-2})} =  2\sqrt{\omega (A^{-2})},
\]
i.e.,~the measure $\sqrt {\omega (A^{-2})}$ is a valid condition number.


\end{remark}

\subsection{Error Analysis for Linear System $Ax=b$}
\label{subsec:erroranal}
\label{page:interested}
We consider the linear system $Ax=b, A\in \Snpp, b\in \Rn$. We are interested in understanding how small changes in the data affect the solution of the
system. Let $x + \Delta x$ be a solution of the perturbed system
\begin{equation}\label{eq:perturbedsystem}
	A(x + \Delta x) = b+\Delta b,
\end{equation}
where $\Delta x , \Delta b \in\Rn$. The \emph{condition number}
aims to be a measure on how strongly a relative error in the data affects the relative
error in the solution~\cite{stoer1980introduction}.  Therefore, it can be estimated as
the ratio
\begin{equation}
	\label{eq:relresratio}
	\cond : \approx \frac{ \| \Delta x\| \|b \|}{\|x\| \|\Delta b\|}\quad
	\text{ (rel. error output/rel. error input)}.
\end{equation}
Note that the above ratio depends on the choice of the perturbation
$\Delta b$, as well as on the choice of the norms.
$\kappa=\kappa(A)= \lambda_{\max}(A)/\lambda_{\min}(A)$ 
is taken as a \emph{worst
case} estimator of the condition number as the inequality
\[
cond = 
 \frac{ \| \Delta x\| \|b \|}{\|x\| \|\Delta b\|} \leq \kappa(A) 
\qquad \left(\leq 4\omega(A)^n \text{ \cite{DeWo:90}}\right),
\]
holds for all $\Delta b\in \Rn$. See \cref{eq:condABworst} above, 
and e.g.,~\cite{MR86f:65083} or ~\cite[Chapter~7]{strang2012linear} 
for further details. 

In the literature, the system is termed \emph{ill-conditioned}
if $\kappa$ is \emph{large}, and termed
 \emph{well-conditioned} otherwise.\footnote{                          
	$\kappa(A)$ is also used to measure error that arises from
	perturbations in $A$:
	$
	\frac{\|\Delta x \|}{\|x + \Delta x\|} \leq  \kappa(A) \frac{\|\Delta A\|}{\| A \|}.
	$
	The results are essentially equivalent.}
We now study the correlation between the three estimates of cond: 
(i) $\kappa$, (ii) $\omega$, and (iii)$\omegatwo$, with the estimate of $\cond$ 
evaluated by sampling. We sample as follows:
\begin{enumerate}
	\item
	Generate $200$ linear systems $A_ix=b_i, i\in[200]$, where the positive definite matrices~$A_i$ are randomly generated with uniformly distributed eigenvalues in $(0,1)$ and the column vectors are set as $b_i := A_i x_i$, with  $x_i$ sampled from the standard normal distribution.
	\item
	For each $i\in{[200]}$, we generate 1000 perturbations $\{\Delta
b_j\}_{j\in[1000]}$ of norm	$10^{-6}$ and set $\Delta x_j :=
A_i^{-1} \Delta b_j$.  Then, for each ${j\in[1000]}$ we compute the
relative residual ratio in~\Cref{eq:relresratio}.  We then average over
$j$ to yield an estimate of the condition number, $\cond(A_i)$, of the
$i$-th system.

	\item
	We then check the resulting correlation between the
	following vectors (i) $(\kappa(A_i))_{i=1}^{200}$,
	(ii) $(\omega(A_i))_{i=1}^{200}$, and (iii) $(\sqrt{\omega(A_i^{-2})})_{i=1}^{200}$, with
	$(\cond(A_i))_{i=1}^{200}$, by comparing the corresponding linear regression models.
\end{enumerate}
\Cref{fig:erroranalysis_uniform}, page \pageref{fig:erroranalysis_uniform},
reveals a significant linear
correlation between $\cond$ and $\omega$, with a correlation
coefficient of $0.9251$ for $\omegatwo$ and
of $0.9062$ for $\omega$; whereas in contrast, $\cond$ and $\kappa$ are not linearly
correlated as the correlation coefficient is $0.4530$.

The same experiment with the eigenvalues of
the matrices $\{A_i\}_{i\in[200]}$ generated from the \emph{normal
standard distribution} are displayed in 
\Cref{fig:erroranalysis_normal}, page \pageref{fig:erroranalysis_normal}.
We get correlation coefficients: 
$0.7982 > 0.4847 > 0.0295$ with $\omegatwo,\omega,\kappa(A)$,
respectively, i.e.,~we cannot conclude existence of a  linear relation 
between $\cond$ and $\kappa,\omega$.


\label{page:figlinregrcondkappaomega}
\begin{figure}[ht!]\centering
\includegraphics[width = 0.5\textwidth]{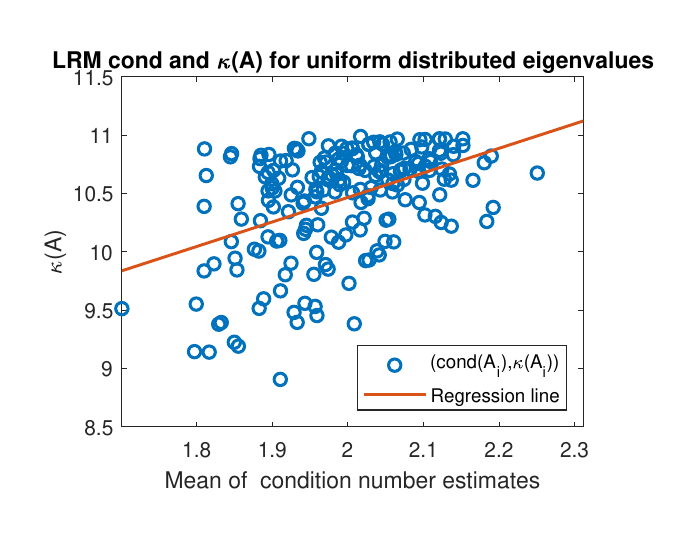}\hfill
\includegraphics[width = 0.5\textwidth]{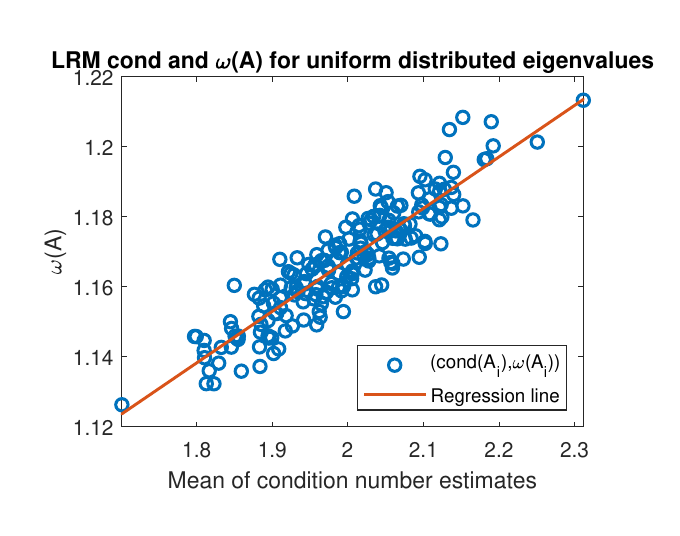}\hfill
\includegraphics[width = 0.5\textwidth]{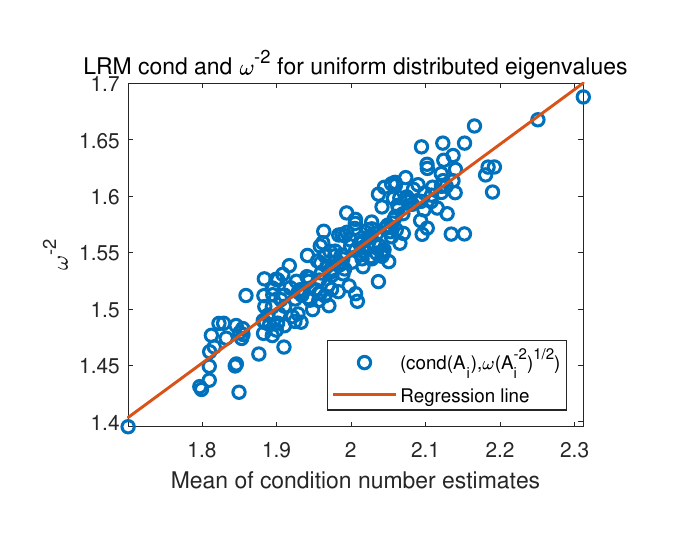}
\caption{Linear regression models (LRM) between $\cond$ and: 
$\kappa,\, \omega,\, \omegatwo$, respectively; uniformly distributed eigenvalues. 
}
\label{fig:erroranalysis_uniform}
\end{figure}
\begin{figure}[ht!]\centering
\includegraphics[width = 0.5\textwidth]{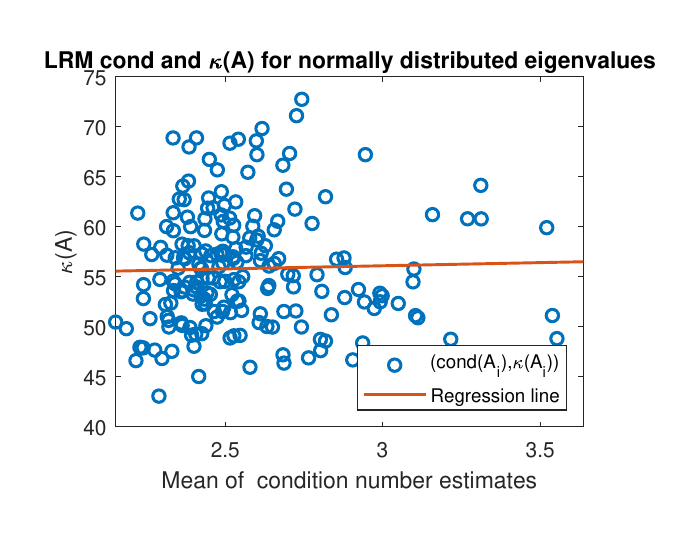}\hfill
\includegraphics[width = 0.5\textwidth]{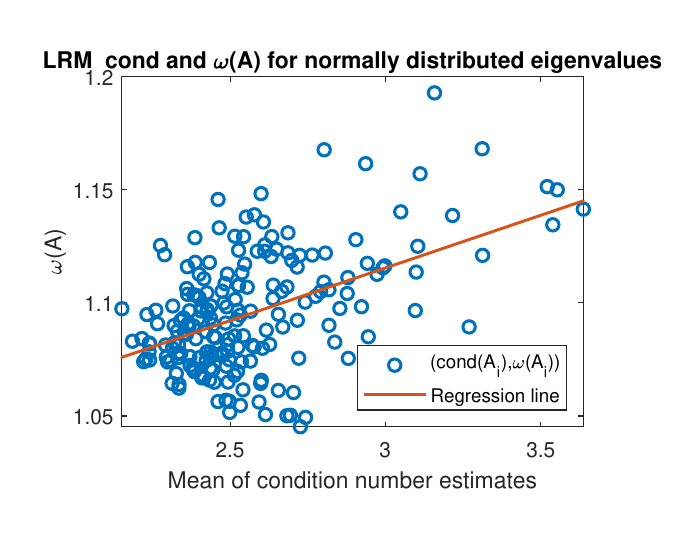}\hfill
\includegraphics[width = 0.5\textwidth]{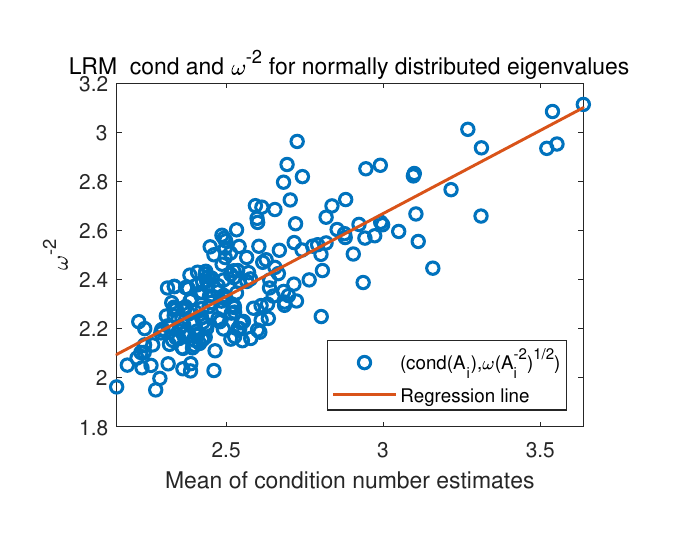}
\caption{Linear regression models (LRM) between $\cond$ and: 
$\kappa,\, \omega,\, \omegatwo$, respectively; normally distributed eigenvalues. 
}
\label{fig:erroranalysis_normal}
\end{figure}

\subsection{Eigenvalue Clustering}
\label{sect:itermethods}
As stated above, preconditioning is essential for iterative methods for
solving linear systems. And, many of the convergence analysis results
depend on clustering of eigenvalues, e.g.,~\cite{MR98j:65023}.
A typical comparison for the eigenvalues of $A\succ 0$ after
preconditioning with the optimal $\kappa,\omega,\textdef{$\omegatwo$}$ 
diagonal preconditioners is given in~\Cref{fig:smoothingeigs}
(the corresponding \textsc{Matlab} code is available online in \href{https://github.com/DavidTBelen/omega-condition-number.git}{https://github.com/DavidTBelen/omega-condition-number}).
\Cref{fig:smoothingeigs} clearly shows the improved clustering of
eigenvalues, as $\kappa$ essentially shifts the eigenvalues to reduce
the $\lambda_{\max}/\lambda_{\min}$ ratio, while both $\omega$ measures
move the eigevenvalues towards $1$ and promote clustering.
We see this in the large number of eigenvalues that are close to the mean
value for the $\omega$ optimal preconditioned matrices in the second
of the two figures in~\Cref{fig:smoothingeigs}.

The effect on
iterations for solving the system is given 
in~\Cref{section:NumericalTests}, below.
\begin{figure}[ht!]\centering
\vspace{-1.1in}
 \includegraphics[width = 0.5\textwidth]{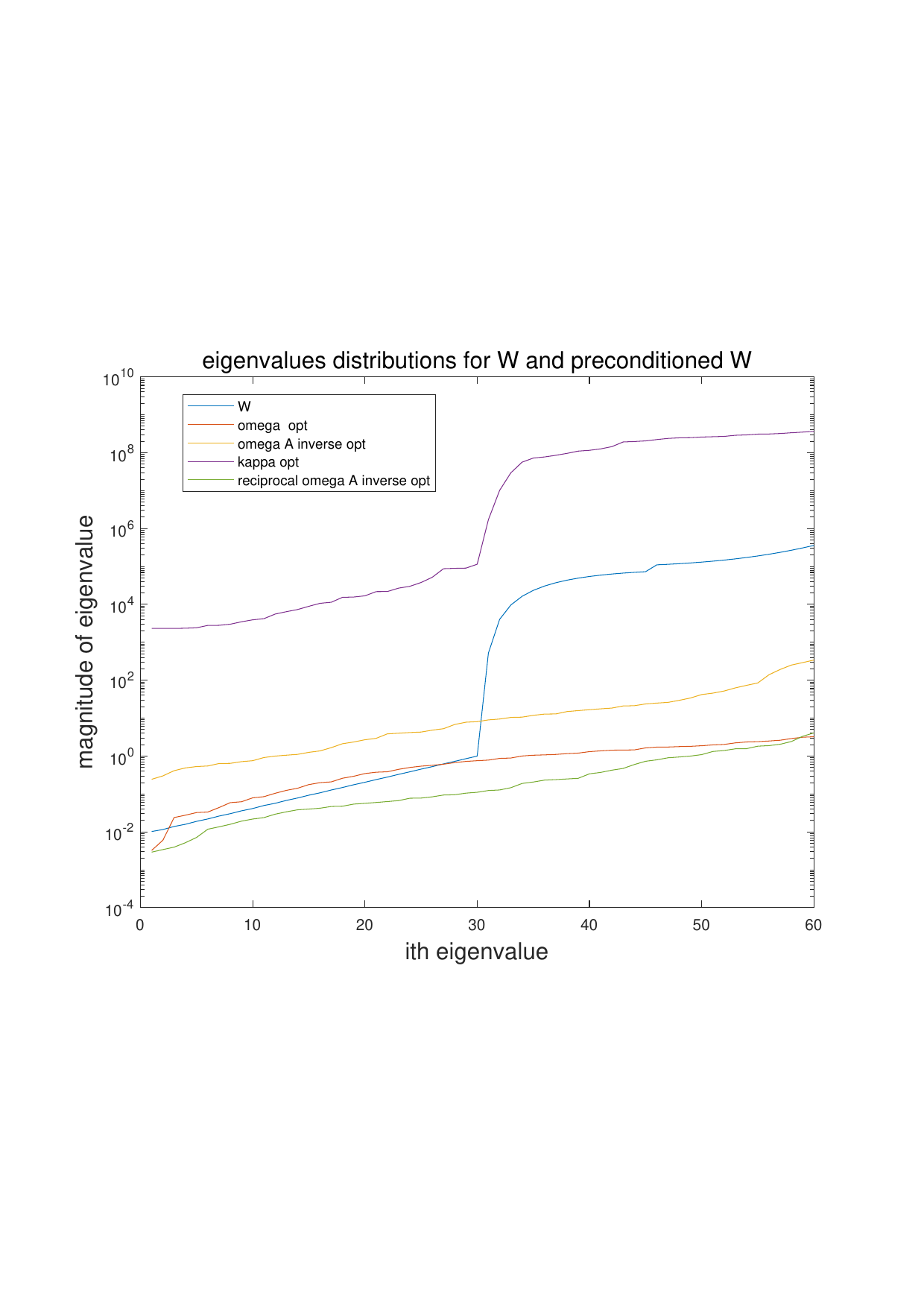}\hfill
 \includegraphics[width = 0.5\textwidth]{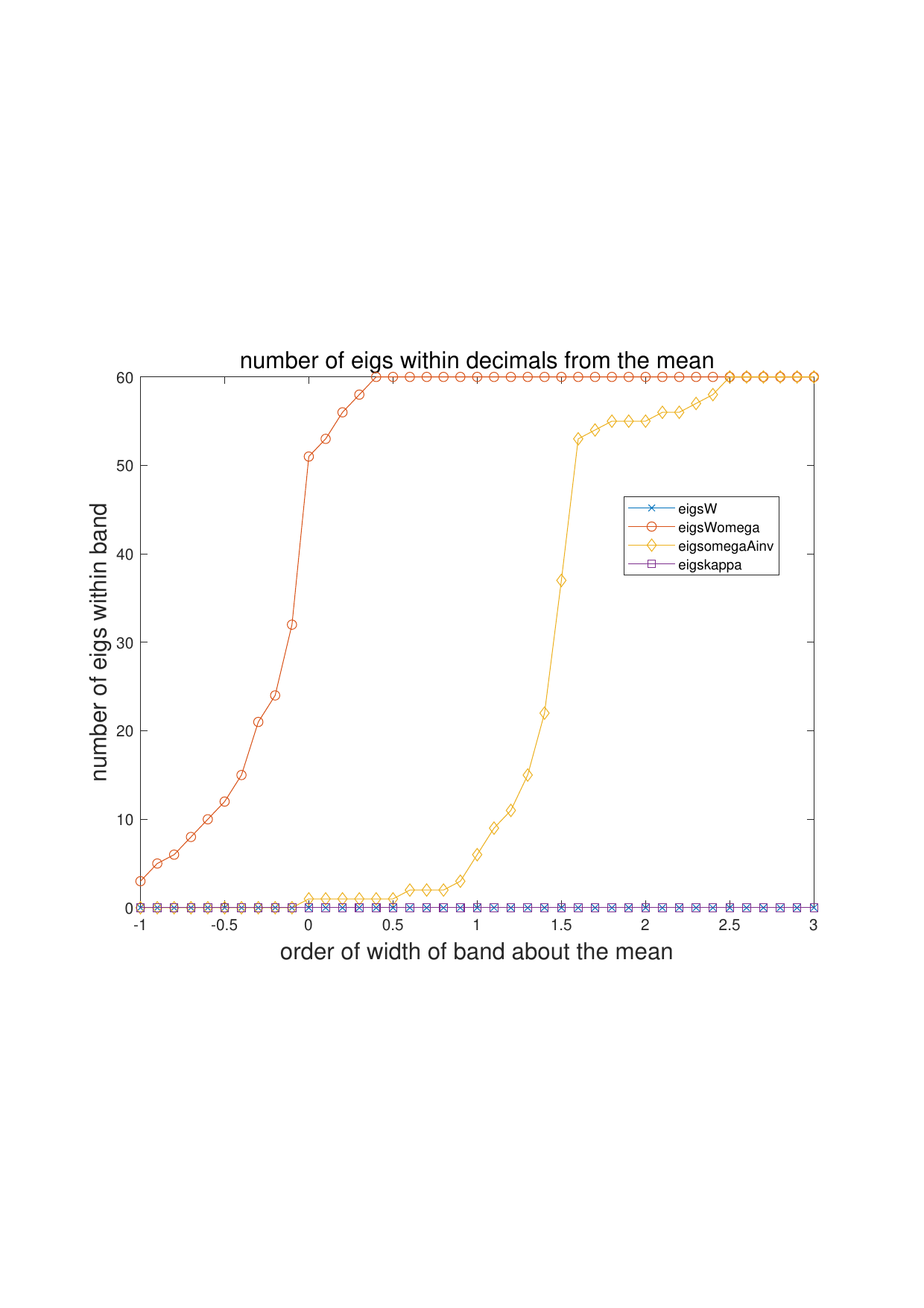}\hfill
\vspace{-1.1in}
 \caption{Comparison for clustering of eigenvalues pre-post
preconditioning}
\label{fig:smoothingeigs}
\end{figure}

\subsection{Incomplete Upper Triangular $\omega$-Optimal Preconditioner}
\label{sect:omegaoptpreconds}

	Approximations of the inverse of the Cholesky decomposition are widely
	used as preconditioners for linear systems. It is easy to 
	verify that the inverse of the Cholesky provides the optimal
	$\omega$ preconditioner. Indeed, let $W=R^TR$ be the Cholesky
	decomposition of $W$.  Then $\omega(R^{-T}WR^{-1}) = \omega(I) = 1$.
	However, it is well-known that sparsity can be lost when finding
	$R$ and $R^{-1}$. Therefore, permutation techniques are used when
	finding an incomplete Cholesky decomposition, e.g.,~\cite{MR3638573}.
	
	\index{$\Dt:\R^n \times \R^{t(k)}\to\R^{n\times n}$}
	In this section we see an interesting relationship between finding
	an $\omega$-optimal \emph{incomplete upper triangular preconditioner}
	and an incomplete Cholesky factorization, see~\Cref{thm:incompleteuppertri}.
	\label{page:pressparsity}
	Specifically, given an integer $2\leq
	k \leq n$, let
	$\alpha=(\alpha_{1,2},\alpha_{1,3},\alpha_{2,3},\ldots,\alpha_{1,k},\ldots,\alpha_{k-1,k})\in\R^{t(k-1)}$
	and $d\in\R^n$. We consider a   preconditioner in the form of 
	\begin{equation}\label{eq:Dt}
		\begin{aligned}
			\Dt (d,\alpha)& =  \Diag(d) +  \Trirk{(\alpha)} \\
			& = 
			\begin{pmatrix}
				d_1& \alpha_{1,2 }&  \alpha_{1,3} &  \ldots & \alpha_{1,k}&  0 & \ldots & 0 \cr
				0 &  d_2 &  \alpha_{2,3}& \ldots  & \alpha_{2,k} & 0 & \ldots &0 \cr
				0 & 0 & d_3 & \ddots  &\alpha_{3,k} &  0 & \ldots & 0 \cr
				0 & \ldots & \ldots & \ddots &\alpha_{k-1,k} &  0 & \ldots & 0  \cr
				\vdots & \ldots & \ldots & \ldots & d_{k} &  0 & \ldots & 0  \cr
				0 & \ldots  & \ldots  &  \ldots & 0 &  d_{k+1}& \ldots & 0 \cr
				0 & \ldots  & \ldots  &  \ldots & \ldots &  0 & \ddots & 0 \cr
				0 & \ldots  & \ldots  &  \ldots & \ldots &  0 & \ldots & d_n \cr
			\end{pmatrix},
		\end{aligned}
	\end{equation}
	where the linear mapping \textdef{$\Trirk:\R^{t(k-1)}\to \R^{n\times
			n}$} is defined accordingly. Its adjoint operator is  \textdef{$\Trirk^*
		= \trirk:\R^{n\times n} \to\R^{t(k-1)}$},
	$M\mapsto(M_{1,2},M_{1,3},M_{2,3},\ldots,M_{1,k},\ldots,M_{k-1,k})$.
	Moreover, the inverse maintains the same structure and sparsity pattern.
	
	Observe that if $k=n$ then $\Dt(d,\alpha)$ returns a complete upper
	triangular matrix. In that case it trivially follows that the
	$\omega$-optimal preconditioner will be given by the Cholesky
	decomposition. In any case, even when $k<n$, the
	$\omega$-optimal incomplete upper triangular preconditioner will
	be related to the Cholesky factorization. Therefore, we first recall
	the following recursive formula for computing the latter.
	
	\begin{remark}[Recursive formula for the Cholesky decomposition]
		Let $W\in\mathbb{S}^{k}$ be  a positive definite matrix and let $W=R^TR$
		be the Cholesky decomposition of $W$. We recall that the upper
		triangular Cholesky factor $R$ admits the following recursive
		columnwise construction for $j=1:k$:
		\begin{equation}\label{eq:chol-rec}
			\begin{aligned}
				R_{i,j} &=  \frac{1}{R_{i,i}} \left(W_{i,j} - \sum_{t=1}^{i-1} R_{t,j}
				\,  R_{t,i} \right), \quad \text{ for  } i = 1,\ldots, j-1, \\
				R_{j,j} &= \sqrt{ W_{j,j}  - \sum_{t=1} ^{j-1} R_{t,j}^2}.
			\end{aligned}
		\end{equation}
	\end{remark}
	
	\begin{theorem}
		\label{thm:incompleteuppertri}
		Let $W\in\mathbb{S}_{++}^n$, and let $W_{1:k,1:k}=R^TR$ be 
		the Cholesky decomposition, $k\in [n]$.
		The  $\omega$-optimal  incomplete upper
		triangular preconditioner in the form of \cref{eq:Dt} for $W$, i.e.,
		\begin{equation}
			\label{eq:omegaoptincchol}
			(\bd,\balpha) := \argmin_{(d,\alpha)\in\R_{++}^n\times \R^{t(k-1)}}
			\omega \left( \Dt(d,\alpha)^T W \Dt(d,\alpha) \right),
		\end{equation}
		is given by
		\begin{equation}\label{eq:bdbalpha} 
			\begin{aligned}
				\bd_j &= R_{j,j}^{-1},   &\text{for } j&\in[k];\\
				\bd_j & = W_{j,j}^{-1/2},  &\text{for } j&\in[k+1,n];\\
				\balpha_{i,j} & = - \frac{1}{R_{i,i}} \left( \sum_{s= i+1}^{j-1} R_{i,s}
				\balpha_{s,j} + R_{i,j} \bd_j\right),                      &\text{for }
				k&\geq j>i \geq 1.
			\end{aligned}
		\end{equation}
		We get, by abuse of notation with \cref{eq:bdbalpha},
		\[
		\Dt(\bar d,\bar \alpha)   = 
		\blkdiag \left(R^{-1},\Diag\left(\bar d_{[k+1,n]}\right)\right).
		\]
	\end{theorem}
	
		
\begin{proof}
	We divide the proof into three claims.
	
	\textbf{Claim 1:}  The $\omega$-optimal $\Dt$ preconditioner is obtained by $(\bd,\balpha)$ solving the nonlinear system
	\begin{equation}\label{eq:sequationt}
		\begin{bmatrix}
			\diag W \left(  \Diag(\bd) + \Trirk(\balpha) \right) \\
			\trirk W \left( \Diag(\bd) + \Trirk(\balpha) \right)
		\end{bmatrix}= \begin{pmatrix} \bd^{-1} \\ 0 \end{pmatrix},
	\end{equation}
	where $\bd^{-1} = (\bd_1^{-1}, \ldots, \bd_n^{-1})^T$.

	In order to prove this, and to ease the notation, fix $W$ and  consider the $\omega$-condition number, $f$ and $g$ as functions of a pair $(d,\alpha)\in\R_{++}^n \times \R^{t(k-1)}$. Namely, we set
	\[
	\omega_{+tk}(d,\alpha) = \frac{f_{+tk} (d,\alpha)}{g_{+tk} (d,\alpha)} := \frac{\tr\left( \Dt(d,\alpha)^T W \Dt(d,\alpha)\right)/n}{ \det \left( \Dt(d,\alpha)^T W \Dt (d,\alpha)  \right)^{1/n} }.
	\]
	Alternatively, we can rewrite $f_{+tk}$ as
	\begin{equation}\label{eq:ft}
		\begin{aligned}
			f_{+tk}(d,\alpha) &= \frac{1}{n} \tr\left( \Dt(d,\alpha)^T W \Dt(d,\alpha)\right) \\
			& = \frac{1}{n} \left\langle \Dt^* W \Dt (d,\alpha), \begin{pmatrix} d \\ \alpha\end{pmatrix} \right\rangle.
		\end{aligned}
	\end{equation}
	Hence,
	\begin{equation}\label{eq:gft}
		\begin{aligned}
			\nabla f_{+tk}(d,\alpha)  & = \frac{2}{n} \Dt^* W \Dt (d,\alpha)\\
			& = \frac{2}{n} \begin{bmatrix}
				\diag W \left(  \Diag(d) + \Trirk(\alpha) \right) \\
				\trirk W \left( \Diag(d) + \Trirk(\alpha) \right)
			\end{bmatrix}.
		\end{aligned}
	\end{equation}
	On the other hand, we have that
	\[
	g_{+tk}(d,\alpha) =  \det(W)\left( \prod_{i=1}^n d_i \right)^{\frac{2}{n}} \quad \text{ and } \quad \nabla g_{+tk}(d,\alpha) = \frac{2}{n} g_{+tk}(d,\alpha) \begin{pmatrix} d^{-1} \\ 0 \end{pmatrix},
	\]
	where $d^{-1} = (d_1^{-1}, \ldots, d_n^{-1})^T\in\R^n_{++}$.
	
	Therefore, the optimality condition for the pseudoconvex  function $\omega_{+t}$ is given  by
	\begin{equation}\label{eq:gomegat}
		\nabla \omega_{+tk} (d,\alpha) = K \left( \Dt^* W \Dt (d,\alpha) - f_{+tk}(d,\alpha)  \begin{pmatrix} d^{-1} \\ 0 \end{pmatrix} \right) =0,
	\end{equation}
	with $K:= 2 / (n \, g_{+tk}(d,\alpha) )>0$. Finally, observe that it suffices to obtain $(\bar{d},\bar{\alpha})\in\R^n_{++}\times\R^{t(k-1)}$ such that 
	\begin{equation}\label{eq:f-Claim1}
		\Dt^* W \Dt (\bar{d},\bar{\alpha}) -  \begin{pmatrix} \bar{d}^{-1} \\ 0 \end{pmatrix}  =0,
	\end{equation}
	as by~\cref{eq:ft} this immediately implies 
	\[
	f_{+tk}(\bar{d},\bar{\alpha}) =  \frac{1}{n}\left\langle   \begin{pmatrix} \bar{d}^{-1} \\ 0 \end{pmatrix}, \begin{pmatrix} \bar{d} \\ \bar{\alpha} \end{pmatrix} \right\rangle = 1,
	\]
	which in turn would yield \cref{eq:gomegat}. Thus, \cref{eq:f-Claim1} together with \cref{eq:gft} concludes this part of the proof.
	
	\textbf{Claim 2:} A solution  $(\bd,\balpha)$ to \cref{eq:sequationt} is
	given by $\bd_{i}= W_{i,i}^{-1/2}$,  for  $i\in[k+1,n]$, and with 
	\begin{equation}\label{eq:claim2}
		Q  := \Diag(\bd_{1:k}) + \Triu(\balpha) 
	\end{equation}
	being the inverse of the Cholesky decomposition of the matrix $W_{1:k,1:k}$.

	We start by fixing notation. Let $\widehat{W}: = W_{1:k,1:k}$ and   $\widetilde{W} := W_{k+1:n,k+1:n}$. Recall the definition of the operator $\Triu$ which applied to a vector  $\alpha =(\alpha_{1,2},\alpha_{1,3},\alpha_{2,3},\ldots,\alpha_{1,k},\ldots,\alpha_{k-1,k})\in\R^{t(k-1)}$ returns the  upper triangular matrix $\Triu(\alpha) = T\in \R^{k\times k}$ such that $T_{i,j}=\alpha_{i,j}$ if $1\leq i< j \leq n$, and $T_{i,j} = 0$ otherwise. The adjoint of $\Triu$ is denoted as $\triu$.  Then the system \cref{eq:sequationt} can be split into the  two equations
	\begin{equation}\label{eq:sequationt-1}
		\diag W \left(  \Diag(\bd) + \Trirk(\balpha) \right)  = 
		\begin{bmatrix}
			\diag\widehat{W}\left(\Diag(\bd_{1:k}) + \Triu(\balpha)\right) \\
			\diag\widetilde{W}\Diag(\bd_{k+1:n})
		\end{bmatrix}
		= \bar{d}^{-1}
	\end{equation}
	and
	\begin{equation}\label{eq:sequationt-2}
		\trirk W \left( \Diag(\bd) + \Trirk(\balpha) \right)   =
		\triu \widehat{W} \left(\Diag(\bd_{1:k}) + \Triu(\balpha)\right) 
		=0.
	\end{equation}
	Observe that the variables $\bd_{k+1},\ldots,\bd_{n}$ only appear in the
	lower block of~\cref{eq:sequationt-1}, that can be directly solved to
	obtain $\bd_{i} = W_{i,i}^{-1/2}$ for all $i\in[k+1,n]$.
	
	On the other hand, the variables $\bd_1,\ldots,\bd_k$ and $\balpha$  are present in \Cref{eq:sequationt-2} and the upper block of \Cref{eq:sequationt-1}. Nonetheless, by taking into account that if $n=k$  then $\Triu = \Trirk$, it is easy to check that these equations define the $\omega$-optimal triangular preconditioner of the matrix $\widehat W\in\mathbb{S}^k_{++}$. Therefore we conclude that $Q$ coincides with the inverse of the Cholesky factorization of $\widehat W$. 
	
	\textbf{Claim 3:}  Let $Q  := \Diag(\bd_{1:k}) + \Triu(\balpha) $ be the inverse of the Cholesky decomposition of $\widehat{W}$. Then 
	$(\bd_{1:k},\balpha)$ is given as in \cref{eq:bdbalpha}.

	Let $\widehat{W}=R^T R$ be the Cholesky decomposition of $\widehat{W}$, where
	\[
	R = \begin{pmatrix}
		R_{1,1} & R_{1,2} &  R_{1,3} &  \ldots & R_{1,k} \cr
		0 &  R_{2,2} &  R_{2,3} & \ldots & R_{2,k} \cr
		\vdots & \ldots & \ldots & \ldots & \vdots \cr
		0 & \ldots & \ldots & 0 & R_{k,k}
	\end{pmatrix},
	\]
	and the entries are given as in~\cref{eq:chol-rec}. Let $Q=R^{-1}$ be the matrix defined in \cref{eq:claim2}. We now use the equation $RQ=\Id$ to obtain an expression of $Q$ in terms of $R$. We have:
	{\footnotesize
		\[
		\Id = 
		\begin{pmatrix}
			R_{1,1} & R_{1,2} &  R_{1,3} &  \ldots & R_{1,k} \cr
			0 &  R_{2,2} &  R_{2,3} & \ldots & R_{2,k} \cr
			\vdots & \ldots & \ddots & \ldots & \vdots \cr
			\vdots & \ldots & \ldots  & R_{k-1,k-1} & R_{k-1,k} \cr
			0 & \ldots & \ldots & 0 &  R_{k,k}
		\end{pmatrix} 
		\begin{pmatrix}
			\bd_1& \balpha_{1,2 }&  \balpha_{1,3} &  \ldots & \balpha_{1,k-1}& \balpha_{1,k} \cr
			0 &  \bd_2 &  \balpha_{2,3}& \ldots & \ldots & \balpha_{2,k} \cr
			\vdots & \ldots & \ldots & \ldots & \ldots &\vdots \cr
			\vdots & \ldots & \ldots & \ldots & \bd_{k-1} & \balpha_{k-1,k} \cr
			0 & \ldots & \ldots & \ldots  &  0 & \bd_k
		\end{pmatrix}.
		\]
	}%
	For each column $j \in{ [k]}$ of $Q$, this leads to the following linear system of  $j$ equations:
	\begin{subequations}
		\begin{align}
			1 & = R_{j,j} \, \bd_{j}, \label{eq:QR-syst-1} \\
			0 & = R_{j-1,j-1} \, \balpha_{j-1,j} + R_{j-1,j} \, \bd_{j} ,  \\
			& \vdots \notag \\
			0 & = R_{j-\ell+1,j-\ell+1}\, \balpha_{j-\ell+1,j} +  \sum_{s=j-\ell+2}^{j-1} R_{j-\ell+1,s} \, \balpha_{s,j} + R_{j-\ell+1,j}\, \bd_{j}, \label{eq:QR-syst-j} \\
			&\vdots \notag \\
			0 & = R_{1,1} \, \balpha_{1,j}  +  \sum_{s=2}^{j-1} R_{1, s}\,  \balpha_{s,j} + R_{1,j} \, \bd_{j} \label{eq:QR-syst-last}.
		\end{align}
	\end{subequations}
	
	Equation \cref{eq:QR-syst-1} readily implies that $\bd_{j} = R_{j,j}^{-1}$ for all $j \in{[k]}$.
	Moreover, for any $\ell\in{[2,j]}$, we can  solve  \cref{eq:QR-syst-j} for getting an expression for $\balpha_{j-\ell+1,j}$ in terms of $\bd_{j}, \balpha_{j-1,j}, \ldots, \balpha_{j-\ell+2,j}$. This yields
	\begin{equation}\label{eq:alpha_ell}
		\balpha_{j-\ell+1,j} = -\frac{1}{R_{j-\ell+1,j-\ell+1}} \left(  \sum_{s=j-\ell+2}^{j-1} R_{j-\ell+1,s} \, \balpha_{s,j} + R_{j-\ell+1,j}\, \bd_{j} \right),
	\end{equation}
	which concludes  Claim 3 and the proof.
\end{proof}

We conclude this section with a simple \textsc{Matlab}'s code for an efficient computation of the $\omega$-optimal incomplete upper triangular preconditioner.

\begin{lstlisting}[style=Matlab-editor]
	%%% Function for computing the $\omega$-optimal incomplete upper triangular preconditioner
	% Input:
	%       - W <- pos. def. matrix
	%       - k <- size of the triangular block
	% Output:
	%       - D <-  optimal preconditioner minimizing omega(D'*W*D)
	function D = i_upper_tri_preconditioner(W,k)
	n = length(W);
	tempR = W(1:k,1:k);
	R = chol(tempR);
	tempW = W(k+1:n,k+1:n);
	tempD = diag(diag(tempW).^(-1/2));
	D = blkdiag(inv(R),tempD);
	end
\end{lstlisting}

\section{Optimal Conditioning for Generalized Jacobians}\label{sec:genJac}
We now consider the problem of improving conditioning
for  low rank updates of very ill-conditioned (close to singular) positive definite matrices. 
\subsection{Preliminaries}
More precisely, given a positive definite matrix $A\in\Snpp$  and a
matrix $U\in\R^{n\times t}$ with $t<<n$,
we aim to find $\gamma\in\R^t$ so as to minimize the condition number of the low rank update
\begin{equation}\label{eq:lowrankupdate}
A + U\Diag(\gamma)U^T.
\end{equation}
This kind of updating arises when finding  generalized Jacobians in
nonsmooth optimization. We provide insight on the 
problem in the following~\Cref{ex:generalizedJacobians}.
\begin{example}[Generalized Jacobians]
\label{ex:generalizedJacobians}
In many nonsmooth and semismooth Newton methods one aims to find a root of a function $F:\R^n\to\R^n$ of the form
\[
F(y):= B(v+B^T y)_+-c,
\]
where $B\in\R^{n \times m}$, $v\in\R^m$, $c\in\R^n$ and $(\,\cdot\,)_+$
denotes the projection onto the nonnegative orthant,
e.g.,~\cite{CensorMoursiWeamsWolk:22,HuImLiWo:21,qi2006quadratically}. 
At every iteration of these algorithms a generalized Jacobian of $F$ 
of the form
\[
J:=\sum_{i\in\cI_+} B_{i}B_i^T + \sum_{j\in\cI_0} \gamma_j B_j B_j^T,
	\text{ with  }\gamma_j\in{[0,1]},
\]
is computed.
Here $B_i$ and $B_j$ denote columns of $B$ over the set of indices
\[
{\small
\begin{array}{rcl}
	\cI_+&:=&\{ i\in [m] : \, (v+B^T y)_i >0\};\text{ and} \cr
	\cI_0 &:=& \{ j\in [m] : (v+B^Ty)_j = 0 \text{ and } (B_j)_{j\in\cI_0} \text{ is a maximal linearly independent set}\}.
\end{array}
}
\]
The generalized Jacobian $J$, is usually singular. It is used to
obtain a Newton direction $d\in\R^n$ by solving  a least-square problem
for the system $(J+\epsilon I) \,d = -F( y)$, where  $\epsilon I$, with
$\epsilon >0$,  is analogous to the regularization term of the
well-known \emph{Levenberg--Marquardt method}. Thus, this linear system
is in general very ill-conditioned. This makes preconditioning by optimal
updating appropriate.

The optimal preconditioned update can be done in our framework as we 
start with
\begin{equation}\label{eq:GJ-setting}
A := \sum_{i\in\cI_+} B_{i}B_i^T + \epsilon I, \quad U =
[B_j]_{j\in\cI_0},
\end{equation}
and then find an optimal low rank update as in \cref{eq:lowrankupdate};
done with additional  box constraints on  $\gamma$, namely, 
$\gamma\in{[0,1]^t}$.
\end{example}

Similar conditioning questions also appear in the normal
equations matrix, $ADA^T$, in interior point methods, e.g.,~modifying the
weights in $D$ appropriately to avoid 
ill-conditioning~\cite{MR4594481,MR4514091}.
For other related work on minimizing condition numbers for low rank updates see,
e.g.,~\cite{MR2765492,MR2288015}.

Here, we propose obtaining an optimal conditioning of the update~\cref{eq:lowrankupdate} by
using  the $\omega$-condition number of~\cite{DeWo:90}, instead of the
classic $\kappa$-condition number.
The $\omega$-condition number presents some advantages with respect to
the classic condition number, since it is differentiable and pseudoconvex in the interior of the positive semidefinite cone, which facilitates addressing minimization problems involving it.
Our empirical results show a significant decrease in the number of
iterations required for a requested accuracy in the residual.

\subsection{Optimal Conditioning for Rank One Updates}
\label{sect:optrankone}
We first consider the special case where the update is rank one.
Related eigenvalue results for rank one updates are well known in the
quasi-Newton literature, e.g.,~\cite{DennSch:79,Schna:77}.
We include this special rank one case as it yields 
 interesting  results.
The general rank-$t$ update is studied in~\Cref{sect:lowrankoptG}, below.
\begin{theorem}
\label{thm:optrankonegamma}
Suppose we have a given $A\in \Snpp$ and   $u\in \Rn$.
Let $A = QDQ^T$ be the (orthogonal) spectral 
decomposition of $A$.
Let $U = uu^T$ and define the rank one update
\[
A(\gamma) = A+\gamma U, \, \gamma \in \R.
\]
 Set
\begin{equation}
\label{eq:wbarDeps}
w =  D^{-1/2}Q^Tu, 
\end{equation}
and
\begin{equation}\label{eq:gammas}
\gamma^* = \frac{ \trace(A) \|w\|^2 - n\|u\|^2 } 
{(n-1)\|u\|^2\| w\|^2}.
\end{equation}
Then, 
$
\gamma^*\in{\left]-\|w\|^{-2},+\infty\right[}
$
provides the  $\omega$-optimal conditioning,
i.e.,
\begin{equation}\label{eq:rank1min}
\gamma^* = \argmin_{A(\gamma)\succ 0} \omega(\gamma).
\end{equation}
\end{theorem}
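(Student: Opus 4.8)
The plan is to reduce the minimization in \cref{eq:rank1min} to a one-dimensional calculus problem by writing $\omega(A(\gamma))$ explicitly. First I would treat numerator and denominator of $\omega$ separately. The trace is affine, $\trace(A(\gamma))=\trace(A)+\gamma\|u\|^2$, while for the determinant I would apply the matrix determinant lemma to the rank-one update to obtain
\[
\det(A(\gamma)) = \det\!\left(A+\gamma uu^T\right) = \det(A)\left(1+\gamma\,u^TA^{-1}u\right).
\]
The key observation is the identity $u^TA^{-1}u=\|w_s\|^2=\|w_c\|^2$: substituting $A^{-1}=QD^{-1}Q^T$ gives $u^TA^{-1}u=\|D^{-1/2}Q^Tu\|^2=\|w_s\|^2$, while substituting $A^{-1}=L^{-T}L^{-1}$ gives $u^TA^{-1}u=\|L^{-1}u\|^2=\|w_c\|^2$. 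This single identity simultaneously explains why $\gamma_s=\gamma_c$ (the two expressions in \cref{eq:gammas} differ only through $\|w_s\|^2$ versus $\|w_c\|^2$) and why the feasible set is the stated interval: $A(\gamma)\succ 0$ if and only if $1+\gamma\|w_s\|^2>0$, i.e.\ $\gamma\in\left]-\|w_s\|^{-2},+\infty\right[$, an open convex domain.

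Writing $t=\trace(A)$, $p=\|u\|^2$, $q=\|w_s\|^2$, on this interval both $t+\gamma p=\trace(A(\gamma))>0$ and $1+\gamma q>0$, so $\omega(A(\gamma))$ equals a positive constant times $g(\gamma):=(t+\gamma p)(1+\gamma q)^{-1/n}$. I would minimize $g$ through its logarithmic derivative,
\[
\frac{g'(\gamma)}{g(\gamma)} = \frac{p}{t+\gamma p} - \frac{1}{n}\,\frac{q}{1+\gamma q}.
\]
Setting this to zero and clearing denominators gives the \emph{linear} equation $np(1+\gamma q)=q(t+\gamma p)$, whose unique root is
\[
\gamma^* = \frac{tq-np}{(n-1)pq} = \frac{\trace(A)\|w_s\|^2-n\|u\|^2}{(n-1)\|u\|^2\|w_s\|^2},
\]
matching $\gamma_s$ in \cref{eq:gammas}. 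Since the stationarity condition is linear, $\gamma^*$ is the only candidate critical point.

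It then remains to confirm feasibility and global optimality. For feasibility I would verify $\gamma^*+\|w_s\|^{-2}>0$; a short computation reduces this to $\trace(A)\,(u^TA^{-1}u)>\|u\|^2$, which follows by multiplying $\trace(A)\geq\lambda_{\max}(A)$ with $u^TA^{-1}u\geq\|u\|^2/\lambda_{\max}(A)$ (from $A^{-1}\succeq\lambda_{\max}(A)^{-1}I$), the inequality being strict when $n\geq 2$ and $u\neq 0$. For global optimality I would invoke item~\ref{item:globminomega} of \Cref{prop:precond}: $\omega$ is pseudoconvex on $\Snpp$, and pseudoconvexity is preserved under composition with the affine map $\gamma\mapsto A+\gamma uu^T$, so $\gamma\mapsto\omega(A(\gamma))$ is pseudoconvex on the feasible interval; hence its unique stationary point $\gamma^*$ is the global minimizer.

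I expect the main obstacle to be these two verification steps rather than the differentiation. Showing that the critical point lands strictly inside the feasible interval is essential, and it rests on the trace/eigenvalue bound above, which must be stated with care in the degenerate cases ($n=1$ makes the denominator vanish, and $u=0$ is excluded). The second delicate point is justifying that the unique stationary point of a pseudoconvex function is the global minimum; here I rely on the affine invariance of pseudoconvexity, which I would state explicitly. As an alternative not using \Cref{prop:precond}, one can argue directly that $g(\gamma)\to+\infty$ at both ends of the interval, since $(1+\gamma q)^{-1/n}\to+\infty$ as $\gamma\downarrow-\|w_s\|^{-2}$ and $g(\gamma)\sim p\,\gamma^{1-1/n}q^{-1/n}\to+\infty$ as $\gamma\to+\infty$ for $n\geq 2$, so a minimizer exists in the interior and must coincide with the unique critical point $\gamma^*$.
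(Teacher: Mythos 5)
Your proof is correct, but it takes a tidier route than the paper's. The paper works via the congruence $A(\gamma)=QD^{1/2}(I+\gamma ww^T)D^{1/2}Q^T$, reads off the eigenvalues of $I+\gamma W$ to get both the feasible interval and $\det(A(\gamma))$, and then runs the quotient-rule computation of $\omega'$ twice --- once for the spectral factor $w_s$ and once, essentially verbatim, for the Cholesky factor $w_c$; the identity $\|w_s\|=\|w_c\|$ is only extracted afterwards (\Cref{cor:orthogequiv}) by equating $\gamma_s=\gamma_c$. You instead apply the matrix determinant lemma directly and observe up front that $u^TA^{-1}u=\|w_s\|^2=\|w_c\|^2$ because $A^{-1}=QD^{-1}Q^T=L^{-T}L^{-1}$; this collapses the two parallel computations into one, replaces the quotient rule by a logarithmic derivative, and incidentally demystifies the ``surprising'' relationship the paper records in \Cref{cor:orthogequiv}, which in your framing is just the tautology $A^{-1}=R_s^TR_s=R_c^TR_c$. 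Your treatment of global optimality matches the paper's (pseudoconvexity from \Cref{prop:precond}~\ref{item:globminomega} plus divergence of $\omega$ at the ends of the interval), though you are more careful on two points the paper glosses over: you state explicitly that pseudoconvexity is preserved under the affine substitution $\gamma\mapsto A+\gamma uu^T$, and you verify directly that $\gamma^*$ lies strictly inside the feasible interval via $\trace(A)\,u^TA^{-1}u>\|u\|^2$, rather than inferring this from the existence of an interior minimizer. The only nit: the determinant identity alone does not give the equivalence $A(\gamma)\succ 0\iff 1+\gamma\|w_s\|^2>0$ (positive determinant is weaker than positive definiteness); you should either add the one-line congruence $A(\gamma)=L\left(I+\gamma w_cw_c^T\right)L^T$ with eigenvalues $1$ and $1+\gamma\|w_c\|^2$, as the paper does, or appeal to continuity of eigenvalues from $A(0)=A\succ 0$ on the interval where the determinant stays positive.
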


\begin{proof}
Let
\[
f(\gamma) := \trace(A(\gamma))/n \quad \text{ and  } \quad 
g(\gamma) := \det(A(\gamma))^{1/n}.
\]
We want to find the optimal $\gamma$ to minimize the condition number
\[
\omega(\gamma) = f(\gamma)/g(\gamma)
\]
subject to $A(\gamma)$ being positive definite. By \Cref{prop:precond,item:globminomega}, $\omega:\mathbb{R}\to\R;\,\gamma \to\omega(\gamma)$ is pseudoconvex as long as $A(\gamma)\succ 0$. We prove that the later is true for $\gamma$ belonging to an open interval in the real line. Indeed, let $A= Q D Q^T$ be the spectral decomposition of $A$ and define 
\begin{equation}\label{eq:Wspect}
 w =  D^{-1/2}Q^Tu\quad \text{ and } \quad
 W =  w w^T = D^{-1/2} Q^Tuu^TQD^{-1/2}.
\end{equation}
Then we can rewrite 
\begin{equation}\label{eq:Agamma}
A(\gamma) = QD^{1/2}(I +\gamma  W)D^{1/2}Q^T,
\end{equation}
which is positive definite if and only if the rank one update of $I$,  $I +\gamma  W $, belongs to the cone of positive definite matrices.  Now, note that the eigenvalues of this term are $\lambda_1=1$, with multiplicity $n-1$, and  $\lambda_2 = 1 +\gamma \|w\|^2$ with multiplicity $1$. We then conclude that
\[
A(\gamma) \in \Snpp \; \Longleftrightarrow \; \gamma \in{\left]- \frac{1}{\|w\|^2}, +\infty \right[},
\]
in which case $\lambda_2 >0$. 
Moreover, $\omega(\gamma)$ tends to $\infty$ as $\gamma$ approaches the
extreme of the above interval. Therefore $\omega$ possesses a  minimizer
in  the open interval, $\gamma^*\in{\left]- \|w\|^{-2}, +\infty
\right[}$, that satisfies $\omega^{\prime}(\gamma^*) = 0$. Note that since $\omega$ is pseudoconvex the fact that its derivative is equal to zero is also a sufficient condition for global optimality (see \Cref{thrm:pseudocnvprog} below).

\label{page:cholspectral}
In the following we obtain an explicit expression for the (unique) minimizer of
\Cref{eq:rank1min}, $\gamma^*$,  by studying
the zeros of $\omega^{\prime}$.  
Using the notation introduced in~\cref{eq:Wspect}, $f$ and its
derivative are expressed as           
\[
f(\gamma) = \left(\trace(A)  +\gamma \|u\|^2 \right)/n \quad  \text{ and } \quad
f^\prime(\gamma) = \|u\|^2/n,
\]
respectively. By making use of~\cref{eq:Agamma}, $g$ becomes
\[
g(\gamma) := 
\left(\det( A)
\det( I + \gamma W )\right)^{1/n},
\]
since $\det(D) = \det(A)$. As explained above the eigenvalues of
$ I + \gamma  W$, are $\lambda_1 = 1 +\gamma \| w\|^2$, and the
others are all $1$, which yields that
\[
g(\gamma) =\left( \det(A) (1+ \gamma \| w\|^2) \right)^{1/n} 
          = \det(A)^{1/n}  (1+ \gamma \|w\|^2)^{1/n}.
\]
We get
\[
g^\prime(\gamma) =  
\frac 1n
           \det(A)^{1/n}  \| w\|^2
           (1+ \gamma \| w\|^2)^{(1-n)/n}.
\]
The derivative of $\omega$ is then obtained as follows
\label{page:typosq}
\begin{equation}
\label{eq:omegaprime}
\begin{aligned}
\omega^\prime (\gamma)
&= \frac{f^\prime(\gamma) g(\gamma)- f(\gamma) g^\prime(\gamma)}{g(\gamma)^2} \\
& =
\frac{1}{g(\gamma)^2}\left[ \frac{\|u\|^2}{n} \det(A)^{1/n} \left(1 +\gamma\|w\|^2\right)^{1/n} \right. \\
&  \quad  \left.  - \frac{\|w\|^2}{n^2} \left(\trace(A) + \gamma \|u\|^2 \right) \det(A)^{1/n} \left( 1+ \gamma\|w\|^2\right)^{(1-n)/n}\right] \\
& = \frac{\det(A)^{1/n}}{g(\gamma)^2 n^2} \left( 1+\gamma\|w\|^2\right)^{(1-n)/n} \left[n \|u\|^2  +(n-1) \gamma \|u\|^2\|w\|^2 - \trace(A) \|w\|^2 \right].
\end{aligned}
\end{equation}
A simple computation shows that this derivative is $0$ only when $\gamma$ attains the value
\begin{equation}\label{eq:omegaprime0}
\gamma^* =
          \frac{ \trace(A) \|w\|^2 -
                            n\|u\|^2 }
             {(n-1)\|u\|^2\|  w\|^2},
\end{equation}
that has to be in the open interval $\left]-\|w\|^{-2},+\infty\right[$. Since $\omega$ is pseudoconvex, we conclude that $\gamma^*$ is the $\omega$-optimal conditioning that solves~\Cref{eq:rank1min}. 
\end{proof}

Equivalently, we can deduce an expression for the  $\omega$-optimal conditioning by making use of the Cholesky decomposition of $A$ instead of the spectral decomposition. This is gathered in our next corollary.  The proof follows from the same calculations than~\Cref{thm:optrankonegamma} and thus is omitted.
\begin{corollary}
Given $A$ and $U$ as in~\Cref{thm:optrankonegamma}. Let $A=LL^T$ be the Cholesky decomposition of $A$. Then, the formula for the  $\omega$-optimal conditioning $\gamma^*$ in~\Cref{eq:gammas} holds with the replacement 
\[
w \leftarrow L^{-1}u.
\]
\end{corollary}

\label{page:oldcorlem}

As shown in \Cref{ex:generalizedJacobians}, in some applications
the preconditioner multiplier $\gamma$ is required 
to take values in the interval $[0,1]$.
In the following, we analize the optimal $\omega$-preconditioner
for the rank $1$ update subject to this interval constraint.

\begin{corollary}\label{cor:rankoneconst}
Let the assumptions of~\Cref{thm:optrankonegamma} hold and let $\bar\gamma$  
be the  $\omega$-optimal conditioning in the interval $[0,1]$, i.e., 
\[
\bar \gamma = \arg \min_{\stackrel{0\leq \gamma\leq 1}{A(\gamma)\succ 0}}
        \omega(\gamma). 
\]
Then, if $\gamma^*\in{]-\|w\|^2,+\infty[}$ is the $\omega$-optimal ``unconstrained'' conditioning obtained in~\Cref{thm:optrankonegamma}, the following hold:
\begin{enumerate}[(i)]
\item\label{it:01-0}
If $\gamma^*\in{[0,1]} \implies \bar \gamma = \gamma^*$;
\item\label{it:01-1}
If $\gamma^*< 0 \implies \bar \gamma = 0$;
\item\label{it:01-2}
If $\gamma^* > 1 \implies \bar \gamma = 1$.
\end{enumerate}
\end{corollary}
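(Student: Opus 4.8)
The plan is to recognize the constrained problem as an instance of the pseudoconvex program in \Cref{thrm:pseudocnvprog} and to certify the claimed points by exhibiting KKT pairs. I take the open convex set $\Omega = {]-\|w\|^{-2},+\infty[}$, on which $A(\gamma)\succ 0$ and on which $\omega$ is pseudoconvex by \Cref{thm:optrankonegamma} (together with \Cref{prop:precond}\ref{item:globminomega}), and I encode the box constraint through the two affine (hence differentiable and quasiconvex) functions $g_1(\gamma):=-\gamma$ and $g_2(\gamma):=\gamma-1$. Since $-\|w\|^{-2}<0$ we have $[0,1]\subseteq\Omega$, so the feasible set is exactly $[0,1]$ and the positive-definiteness constraint is never binding. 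Because the constraints are affine, no constraint qualification is required and, by \Cref{thrm:pseudocnvprog}, it suffices in each case to produce a multiplier pair satisfying \cref{eq:pseudoKKT}.

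The one analytic ingredient I need is the sign of $\omega'$ on either side of its unique zero $\gamma^*$ from \cref{eq:omegaprime0}. This is read off directly from the factored expression in \cref{eq:omegaprime}: the scalar prefactor $\frac{\det(A)^{1/n}}{g(\gamma)^2 n^2}(1+\gamma\|w\|^2)^{(1-n)/n}$ is strictly positive throughout $\Omega$, while the bracketed term is affine in $\gamma$ with strictly positive slope $(n-1)\|u\|^2\|w\|^2$. Hence $\omega'(\gamma)<0$ for $\gamma\in{]-\|w\|^{-2},\gamma^*[}$ and $\omega'(\gamma)>0$ for $\gamma\in{]\gamma^*,+\infty[}$; that is, $\omega$ strictly decreases up to $\gamma^*$ and strictly increases thereafter.

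With this monotonicity in hand, the three cases are short verifications built on the stationarity equation $\omega'(\bar\gamma)-\bar\lambda_1+\bar\lambda_2=0$. In case \ref{it:01-0}, where $\gamma^*\in[0,1]$, I take $\bar\gamma=\gamma^*$ and $\bar\lambda_1=\bar\lambda_2=0$; stationarity reduces to $\omega'(\gamma^*)=0$, feasibility is immediate, and complementary slackness holds trivially with zero multipliers. In case \ref{it:01-1}, where $\gamma^*<0$, I take $\bar\gamma=0$, so that $g_1$ is active and $g_2$ is strictly inactive; complementary slackness forces $\bar\lambda_2=0$ and stationarity gives $\bar\lambda_1=\omega'(0)$. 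Since $0>\gamma^*$, the sign analysis yields $\omega'(0)>0$, whence $\bar\lambda_1\geq 0$ and all of \cref{eq:pseudoKKT} holds. Case \ref{it:01-2}, where $\gamma^*>1$, is the mirror image: take $\bar\gamma=1$, so $g_2$ is active and $g_1$ inactive, forcing $\bar\lambda_1=0$ and $\bar\lambda_2=-\omega'(1)$; since $1<\gamma^*$ we have $\omega'(1)<0$, hence $\bar\lambda_2\geq 0$. In each case \Cref{thrm:pseudocnvprog} then certifies that $\bar\gamma$ solves the constrained problem.

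I do not expect a genuine obstacle: the statement is essentially a one-dimensional monotonicity fact dressed as a KKT certificate, and the pseudoconvexity already supplied by \Cref{thm:optrankonegamma} does the heavy lifting. The only delicate point is bookkeeping: the signs in the stationarity equation must be matched to the chosen forms of $g_1,g_2$ and to the sign of $\omega'$ so that the multipliers come out nonnegative. A sign slip there is the single place where the argument could break, so I would double-check the orientation of each active constraint against the sign of $\omega'$ at the endpoint before concluding.
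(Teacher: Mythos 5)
Your proposal is correct and follows essentially the same route as the paper: recognize the problem as the pseudoconvex program of \Cref{thrm:pseudocnvprog} with $g_1(\gamma)=-\gamma$, $g_2(\gamma)=\gamma-1$, and certify the endpoint candidates via KKT multipliers whose nonnegativity comes from the sign of $\omega'$ at $0$ (resp.\ $1$), which the paper reads off from the bracket in \cref{eq:omegaprime} exactly as your monotonicity argument does. Your explicit treatment of case (iii) and the observation that $[0,1]\subseteq\Omega$ (so the definiteness constraint is never binding) are minor additions the paper leaves implicit, but the argument is the same.
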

\begin{proof}
\Cref{it:01-0} In this case, since   $\gamma^*$ is the global optimum of $\omega$ in $]-\|w\|^2,+\infty[$, it would also be so in the interval $[0,1]$.

For~\Cref{it:01-1,it:01-2}, it suffices to observe that, by~\Cref{eq:omegaprime,eq:omegaprime0}, when $\gamma^* <0$ (respectively, $\gamma^* > 1$) the derivative of $\omega$ is monotonically increasing (respectively, decreasing) in the interval $[0,1]$.
\end{proof}

\subsection{Optimal Conditioning with a Low Rank Update}
\label{sect:lowrankoptG}
\label{page:sizeZ}
We now consider the case where the update is low rank. 
We need the following notations.
For a matrix $Z\in \R^{n\times t}$, we use \textsc{Matlab} notation and define
the function {\color{black} \textdef{$\norms(Z) : \R^{n\times t}\to \R^t$}} as the
(column) vector of column $2$-norms of $Z$. We let \textdef{$\normsa(Z)$}
denote the vector of column norms with each norm to the power $\alpha$.

\index{$u\circ w$, Hadamard product}
\index{Hadamard product, $u\circ w$}
\label{page:simplifypf}
\begin{theorem}[Rank $t$-update]
\label{thm:lowrankupdate}
Let $A\in \Snpp, \, U=[u_1,\ldots,u_t]\in \R^{n\times t}$, be given with
$ n>t\geq 2$, and $\norms(U)>0$. Set
\[
A(\gamma) = A + U \Diag(\gamma) U^T,
             \text{  for  }   \gamma \in \R^t.
\]
Let the spectral decomposition of $A$ be given by $A=QDQ^T$, define $ w_i  =  D^{-1/2}Q^Tu_i, i\in[t]$, as in
\cref{eq:wbarDeps}, with $ W = \begin{bmatrix}w_1\ldots 
w_t\end{bmatrix}$. Let
\begin{equation}\label{eq:kUbU}
\begin{array}{rcl}
K(U)  
&=&
\begin{bmatrix}n\Diag\left(\normst(U)\right) -
e\normst(U)^T
\end{bmatrix},
 \\ b(U) 
&=& 
\begin{pmatrix}\trace(A)e- 
n\Diag\left(\normst(W)\right)^{-1}\normst(U)
\end{pmatrix},
\end{array}
\end{equation}
\label{page:dotslash}
where $e$ denotes the vector of all ones. Then, the  $\omega$-optimal conditioning,
\begin{equation}\label{eq:minlowrank}
\gamma^* = \argmin_{A(\gamma)\succ 0} \omega(\gamma),
\end{equation}
is given component-wise for $i\in[t]$ by
\begin{equation}
\label{eq:optgammalowrank}
\begin{aligned}
(\gamma^*)_i  
&=
(K(U)^{-1}b(U))_i
\\&= \frac{1}{(n-t)\|u_i\|^2}\left(\tr(A) - (n-t)\frac{\|u_i\|^2}{\|w_i\|^2} - \sum_{j=1}^{t}\frac{\|u_j\|^2}{\|w_j\|^2}\right).
\end{aligned}
\end{equation}
\end{theorem}
\begin{proof}
Let $A\succ 0$ and  
\[
U = \begin{bmatrix} u_1&\ldots&u_t\end{bmatrix}\in\R^{n\times t}, \, \quad \text{ with }
n>t\geq 2.
\]
We consider the update of the form
\[
A(\gamma) = A+ U \Diag(\gamma) U^T = A +
\sum_{i=1}^t  \gamma_i u_i u_i^t,\quad  \gamma\in \R^t.
\]
Same than in~\Cref{thm:optrankonegamma}, we start characterizing an open subset of $\R^t$ where $A(\gamma)$ is positive definite. In order to do this, we 
again transform the problem using the spectral decomposition of $A$, $A=QDQ^T$, 
and setting 
\[
 w_i = D^{-1/2} Q^T u_i  \quad \text{ and }  \quad  W_i = w_i w_i^T \quad \text{ for } i\in[t].
\]
Then, we can express  $A(\gamma)$ as
\[
\begin{array}{rcl}
A(\gamma) &=&A+ U\Diag(\gamma) U^T \\
&=&
QD^{1/2}\left(I + D^{-1/2} Q^T U \Diag(\gamma) U^T Q D^{-1/2}\right)D^{1/2} Q^T  \\
&=&
QD^{1/2} \left( I + \displaystyle\sum_{i=1}^t \gamma_i (D^{-1/2} Q^T u_i) (u_i^T Q D^{-1/2} ) \right) D^{1/2} Q^T \\
& =& 
QD^{1/2}  \left( I + \displaystyle\sum_{i=1}^t \gamma_i W_i \right)D^{1/2} Q^T.
\end{array}
\]
By repeatedly making use of the formula for the determinant of the 
sum of an invertible matrix and a rank one matrix (see,
e.g.,~\cite[Example~4]{miller1981inverse}),
we obtain the following expression for the determinant of $A(\gamma)$
\begin{equation}\label{eq:detAgammalw}
\det\big( A(\gamma)\big) = \det(A) \prod_{i=1}^t (1+\gamma_i\| w_i\|^2 ).
\end{equation}
Consequently, $A(\gamma)$ is nonsingular and, by continuity of the eigenvalues, positive definite for $\gamma$ belonging to the set
 \begin{equation}\label{eq:Omegalr}
\Omega := \left]-\frac 1{\|w_1\|^2},+\infty\right[ \;\times\; \left]-\frac 1{\|w_2\|^2},+\infty\right[ \times\;  \ldots\; \times \;\left]-\frac 1{\|w_t\|^2},+\infty\right[.
 \end{equation}
 Now, note that the constraint $A(\gamma) \succ 0$ is a positive definite constraint, so it is convex. Therefore, if there exists some $\gamma$ outside of $\Omega$ such that $A(\gamma) \succ 0$, we would lose the convexity of the feasible set, since $A(\gamma)$ is singular on the boundary of $\Omega$. This implies that 
 \[
 A(\gamma) \succ 0 \Longleftrightarrow \gamma \in \Omega.
 \]

Moreover, since $\omega(\gamma)\to+\infty$ as $\gamma$ tends to the border of $\Omega$ or to $+\infty$, we can ensure that $\gamma$ has a minimizer in $\Omega$. Since the function is pseudoconvex on this open set, the global minimum is attained at a point $\gamma^*$ such that $\nabla\omega(\gamma^*) = 0$. Next, we prove that $\gamma^*$ is given by~\Cref{eq:optgammalowrank}.

For this, note that $f(\gamma)$ can be expressed as
\[
f(\gamma) =  \frac 1n \trace(A+ U\Diag(\gamma)U^T)= 
\frac 1n \left(\trace (A) + \sum_{i=1}^t
\gamma_i\|u_i\|^2\right)=
\frac 1n \left(\trace (A) +
\gamma^T\normst(U)\right),
\]
and its gradient is $\nabla f(\gamma)= \frac 1n
\normst(U)$.  On the other hand, by~\Cref{eq:detAgammalw} $g(\gamma)$ can be expressed as 
\[
g(\gamma) = \det(A)^{1/n} \left( \prod_{i=1}^t (1+\gamma_i\| w_i\|^2 )\right)^{1/n}.
\]
The gradient of $g(\gamma)$ is then given component-wise by
\[
\begin{array}{rcll}
	\dfrac {\partial g(\gamma)}{\partial \gamma_j}
	&=& \dfrac{1}{n}\det(A)^{1/n} \left( \displaystyle\prod_{i=1}^t (1+\gamma_i\| w_i\|^2)\right)^{(1-n)/n} \left( \displaystyle\prod_{i=1, i\neq j}^t (1+\gamma_i\| w_i\|^2) \right)\| w_j\|^2 \\
	&=& \dfrac{1}{n}\det(A)^{1/n} \left( \displaystyle\prod_{i=1}^t (1+\gamma_i\| w_i\|^2)\right)^{1/n} (1+\gamma_j\|w_j\|^2)^{-1}\| w_j\|^2\\
	&=&\dfrac{g(\gamma)}{n}\dfrac{\|w_j\|^2}{1+\gamma_j\|w_j\|^2},&j\in[t].
\end{array}
\]
We make use of these expressions in order to compute the partial derivatives of $\omega$. For every $j\in[t]$, we have
\begin{equation}
\label{eq:partomega}
\begin{aligned}
\frac{\partial \omega(\gamma)}{\partial \gamma_j} & = \frac{1}{g(\gamma)^2}\left[\frac{\partial f (\gamma)}{\partial \gamma_j} g(\gamma) - f(\gamma) \frac{\partial g(\gamma) }{\partial \gamma_j}\right]\\
&= \frac{1}{n^2g(\gamma)}\left[n\|u_j\|^2 - \frac{\|w_j\|^2\big(\tr(A) + \gamma^T\normst(U)\big)}{1+\gamma_j\|w_j\|^2}\right].
\end{aligned}
\end{equation}
\label{page:typosfix}
Since $n^2g(\gamma)>0$, the $j$-th partial derivative of $\omega$ is zero if, and only if,
\[
n\|u_j\|^2 - \frac{\|w_j\|^2\big(\tr(A) + \gamma^T\normst(U)\big)}{1+\gamma_j\|w_j\|^2}=0.
\]
Therefore,
%
the minimum of the  pseudoconvex function is obtained as the solution of the linear system  defined by the $t$ equations  

\[
(n-1) \|u_k\|^2 \gamma_k - \sum_{i=1, i\neq k}^t\|u_i\|^2 \gamma_i= \tr(A) - n \frac{\|u_k\|^2}{\|  w_k\|^2}, \quad  k\in[t].
\]
Equivalently,
{\footnotesize
\[
\begin{bmatrix}
(n-1)\|u_1\|^2 & -\|u_2\|^2 &\ldots & & -\|u_t\|^2  \cr
-\|u_1\|^2 & (n-1)\|u_2\|^2 & -\|u_3\|^2 &\ldots &-\|u_t\|^2  \cr
\ldots \cr
-\|u_1\|^2 &  \ldots &\ldots &-\|u_{t-1}\|^2 &(n-1)\|u_t\|^2  \cr
\end{bmatrix}\gamma = 
\begin{pmatrix}
\trace(A) - n \|u_1\|^2/\| w_1\|^2\cr
\ldots \cr
\trace(A) - n \|u_t\|^2/\| w_t\|^2
\end{pmatrix}.
\]
}
This is further equivalent to 
\[
\begin{bmatrix}n\Diag(\normst(U)) - e
\normst(U)^T
\end{bmatrix}\gamma= 
\begin{pmatrix}\trace(A)e- 
n\Diag\left(\normst(W)\right)^{-1}\normst(U)
\end{pmatrix},
\]
which is  the system $K(U) \gamma = b(U)$ using the notation in~\Cref{eq:kUbU}.

Now we derive an explicit expression for the optimal $\gamma$. In order to do this, note that $K(U)$ is given as the sum   of an invertible matrix, $n\Diag(\normst(U))$, and an outer product of vectors, $-e\normst(U)^T$. By the \textdef{Sherman-Morrison formula}, this sum is invertible if and only if 
\[
1-\frac 1n \normst(U)^T \Diag(\normst(U))^{-1} e  \neq 0.
\]
This is always true for $t <n$. Indeed, we have
\[
1-\frac 1n \normst(U)^T \Diag(\normst(U))^{-1} e = 1- \frac 1n e^T e = 1 -\frac tn >0.
\]
Moreover, we obtain the following expression for the inverse
{\small
\[
\begin{aligned}
\left(n \right. &\left.\Diag(\normst(U)) -e\normst(U)^T\right)^{-1}  \\
&= \frac 1n \Diag(\normst(U))^{-1}  + \frac 1{\left( 1-\frac tn \right)n^2} \Diag(\normst(U))^{-1} e \normst(U)^T \Diag(\normst(U))^{-1}  \\
& =
\frac 1n \Diag(\normsti(U)) + \frac  1{(n-t)n}  \Diag(\normsti(U)) e e^T. \\
\end{aligned}
\]
}
Therefore, the inverse  of $K(U)$ in matrix form is given by
\[
K(U)^{-1} =
\frac1n
\begin{bmatrix}
\frac{1}{\|u_1\|^2} & 0 & \ldots & 0 \cr
0 & \frac{1}{\|u_2\|^2}  & \ldots & 0 \cr
\vdots &  \vdots & \ddots & \vdots \cr
0 & 0 & \ldots & \frac{1}{\|u_t\|^2} \cr
\end{bmatrix}
+
\frac 1{(n-t)n}
\begin{bmatrix}
\frac{1}{\|u_1\|^2} &\frac{1}{\|u_1\|^2} &\ldots & \frac{1}{\|u_1\|^2}  \cr
\frac{1}{\|u_2\|^2} &\frac{1}{\|u_2\|^2} &\ldots & \frac{1}{\|u_2\|^2}  \cr
\vdots &  \vdots & \vdots & \vdots \cr
\frac{1}{\|u_t\|^2} &\frac{1}{\|u_t\|^2} &\ldots & \frac{1}{\|u_t\|^2}  \cr
\end{bmatrix}.
\]
Finally, we obtain $\gamma^*$ by calculating the product $\gamma^* = K(U)^{-1}b(U)$ which yields
\begin{equation}\label{eq:optlr}
\gamma_i^* = \frac{1}{(n-t)\|u_i\|^2}\left(\tr(A) - (n-t)\frac{\|u_i\|^2}{\|w_i\|^2} - \sum_{j=1}^{t}\frac{\|u_j\|^2}{\|w_j\|^2}\right),
\end{equation}
for all $i\in[t]$. Since $\gamma^*$ is the unique zero of the gradient of $\omega$, we conclude that it belongs to $\Omega$ and solves~\Cref{eq:minlowrank}.
\end{proof}
We note that the  $\omega$-optimal conditioning for the rank one update in~\Cref{thm:optrankonegamma} is obtained from~\Cref{eq:optgammalowrank} when $t=1$. On the other hand, we can also employ the Cholesky decomposition of $A$ to derive the  $\omega$-optimal conditioning in~\Cref{thm:lowrankupdate}. We state this in the following corollary.

\begin{corollary}\label{cor:lrChol}
Given $A$ and $U$ as in~\Cref{thm:lowrankupdate}. Let $A=LL^T$ be the Cholesky decomposition of $A$. Then, the  formula for the  $\omega$-optimal conditioning $\gamma^*$ in~\Cref{eq:optgammalowrank} holds
with  the replacement 
\[
 w_i \leftarrow L^{-1}u_i,\, \quad   i\in[t].
\]
\end{corollary}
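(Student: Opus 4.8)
The plan is to exploit the fact that the closed form~\cref{eq:optgammalowrank} for $\gamma^*$ depends on the spectral decomposition of $A$ \emph{only} through the scalars $\|w_i\|^2$, together with the factorization-independent data $\trace(A)$, $\|u_i\|^2$, $n$ and $t$. Consequently, to prove the corollary it suffices to show that the vector of squared norms $\normst(W)$ is unchanged when the spectral factor $D^{-1/2}Q^T$ is replaced by the Cholesky factor $L^{-1}$; the formula then carries over verbatim.

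First I would record the governing identity. For each column $u_i$ of $U$, the Cholesky factorization gives $A^{-1}=L^{-T}L^{-1}$, while the spectral factorization gives $A^{-1}=QD^{-1}Q^T=(D^{-1/2}Q^T)^T(D^{-1/2}Q^T)$. Hence
\[
\|L^{-1}u_i\|^2 = u_i^T A^{-1} u_i = \|D^{-1/2}Q^T u_i\|^2, \quad i=1,\ldots,t.
\]
This is exactly the per-column form of~\cref{eq:equalsvd}, established in~\Cref{cor:orthogequiv}; in particular every $\|w_i\|^2$ appearing in~\cref{eq:optgammalowrank} takes the same value under either factorization.

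With this identity in hand the conclusion is immediate: the matrix $K(U)$ in~\cref{eq:kUbU} involves only the column norms $\|u_i\|^2$, and the right-hand side $b(U)$ involves the spectral data solely through $\normst(W)$, which we have just shown is factorization-invariant. Therefore the linear system $K(U)\gamma=b(U)$, and hence its solution $\gamma^*=K(U)^{-1}b(U)$, are identical for the two choices of $w_i$. Equivalently, one may simply rerun the proof of~\Cref{thm:lowrankupdate} with $w_i=L^{-1}u_i$, writing $A(\gamma)=L\bigl(I+\sum_{i=1}^t\gamma_i w_iw_i^T\bigr)L^T$; the determinant expression~\cref{eq:detAgammalw}, the feasible region~\cref{eq:Omegalr}, and the partial derivatives~\cref{eq:partomega} all depend on the update only through $\|w_i\|^2$ and so are unchanged. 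There is no real obstacle here: the single point that must be made explicit is the factorization-invariance of $\|w_i\|^2=u_i^T A^{-1}u_i$, which is precisely what~\Cref{cor:orthogequiv} supplies.
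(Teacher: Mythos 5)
Your proposal is correct and follows essentially the same route as the paper: the paper's proof is a one-line appeal to \cref{eq:equalsvd} in \Cref{cor:orthogequiv}, i.e., the factorization-invariance $\|L^{-1}u_i\|^2=u_i^TA^{-1}u_i=\|D^{-1/2}Q^Tu_i\|^2$, which is precisely the identity you isolate and use. Your additional observation that \cref{eq:optgammalowrank} depends on the decomposition only through $\normst(W)$ simply makes explicit why that identity suffices.
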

\begin{proof}
The proof follows similarly to the one of \Cref{thm:lowrankupdate} and thus is omitted.
\end{proof}

With the same assumptions as in~\Cref{thm:lowrankupdate}, we now
consider the  problem of finding the $\omega$-optimal
conditioning in the box $[0,1]^t$, i.e.,
\begin{equation}\label{eq:lrconst}
\bar \gamma = \arg \min_{\stackrel{\gamma\in{[0,1]^t}}{A(\gamma)\succ 0}}
        \omega(\gamma). 
\end{equation}
For the rank one update ($t=1$), \Cref{cor:rankoneconst} shows that 
the solution to~\Cref{eq:lrconst} can be obtained by first computing the
minimum of the unconstrained problem, whose explicit expression was
given in~\Cref{thm:optrankonegamma}, and then projecting onto the box
constraint, which in that case was the interval $[0,1]$. However, this
simple projection can fail in general for the low rank update, 
as we now show in~\Cref{ex:failproj} below. 

The illustration of this phenomenon will require
considering a constrained pseudoconvex minimization problem. 
In the following \Cref{thrm:pseudocnvprog}, see,
e.g.,~\cite[Chapter~10]{Mang:69}, we recall  the
sufficient optimality conditions for this class of optimization problems.
We note that no constraint qualification is needed for
\emph{sufficiency}.

\begin{fact}[Sufficient optimality conditions for pseudoconvex programming]
\label{thrm:pseudocnvprog}
Let $\Omega\subseteq \Rn$ be  nonempty open and convex. Let
$f:\Omega\to\R$ be  a pseudoconvex   function and
$(g_i)_{i=1}^m:\Omega\to\R$ a family of differentiable and quasiconvex
functions. Consider the optimization problem
\begin{equation}\label{eq:psudoconvprog}
\begin{array}{llr}
\min &f(x)  &\\
\text{\emph{s.t.}} & g_i(x) \leq 0,&  i\in[m], \\
& x\in\Omega. & 
\end{array}
\end{equation}
Let $\bar{x}\in \Omega, \bar{\lambda}\in\R^m$, be a KKT primal-dual
pair, i.e.,~the following \textdef{KKT conditions} hold:
\begin{equation}\label{eq:pseudoKKT}
\begin{aligned}
\nabla f(\bar{x}) + \sum_{i=1}^m \bar \lambda_i \nabla g_i(\bar x) & = 0  & \\
\bar \lambda_i &\geq 0, &  i\in[m], \\
\bar{\lambda}_i g_i(\bar x) & = 0,& i\in[m], \\
\bar{x}\in\Omega \text{ and }  g_i(\bar x)& \leq 0, & i\in[m].  \\
\end{aligned}
\end{equation}
Then $\bar x$ solves~\cref{eq:psudoconvprog}.
\end{fact}

\begin{example}[Failure of projection for constrained problem~\Cref{eq:lrconst}]
\label{ex:failproj}
Let $n=3$, $t=2$ and consider the following initial data for the 
$\omega$-minimization problem:
\begin{equation*}
A := \begin{bmatrix} 1 & 0 & 0 \\ 0 & 2 & 0 \\ 0 & 0 & 2 \end{bmatrix} \quad \text{ and } \quad U:= \begin{bmatrix} \frac {1}{\sqrt{2}} & 0 \\   \frac {-1}{\sqrt{2}} & 0 \\ 0 & 1 \end{bmatrix}.
\end{equation*}
Then, we get the following:
\begin{itemize}
\item 
From~\Cref{eq:optlr} and~\Cref{thm:lowrankupdate},
the $\omega$-optimal preconditioner is
$\gamma^* = \frac 13  \colvec{ \phantom{-} 1\\ -1 }$;
\item 
projecting onto $[0,1]^2$ yields
$\gamma^*_p =  \frac 13  \colvec{  1\\ 0 }$,
where $\omega(\gamma^*_p) = 16/(9 \sqrt[3]{5})$;

\item 
however, with $\bar \gamma := \frac 12  \colvec{  1\\ 0 }$,
we get a lower value:
\label{page:approxvalue}
\[
\omega(\bar\gamma) = 1/\left(3 \sqrt[3]{(2/11)^{2}}\right) \approx 
1.0386 < 1.0397 \approx 16/(9 \sqrt[3]{5}); 
\]
 and $\bar\gamma$ is the
$\omega$-optimal preconditioner  in $[0,1]^2$, as we now show.
\end{itemize}
To prove the last statement, note that~\Cref{eq:lrconst} can be written
as the pseudoconvex program in~\Cref{eq:psudoconvprog}  by setting
$f:=\omega:\R^2\to\R$, $g_1(\gamma) = - \gamma_1$,
$g_2(\gamma)= - \gamma_2$, $g_3(\gamma) = \gamma_1-1$,  $g_4(\gamma) =
\gamma_2-1$ and $\Omega$ defined as in~\Cref{eq:Omegalr}. In particular,
the only active constraint for $\bar \gamma = (1/2,0)^T$ is
$g_2(\gamma)=0$, so the KKT conditions become
\begin{equation*}
\begin{aligned}
0 &= \frac{\partial{\omega(\bar \gamma)}}{\partial \gamma_1},  \\
0 & = \frac{\partial{\omega(\bar \gamma)}}{\partial \gamma_2} - \bar{\lambda}_2,
\end{aligned}
\end{equation*}
for some $\bar{\lambda}_2 \geq 0$. This can be verified by  simply
substituting using the expressions of the partial derivatives of
$\omega$ obtained in~\cref{eq:partomega}.  By~\Cref{thrm:pseudocnvprog},
we conclude that for the given data, $\bar \gamma$ is the solution 
of~\Cref{eq:lrconst}.
\end{example}

As done in the previous example, obtaining the
$\omega$-optimal preconditioner in the box $[0,1]^t$  would require obtaining a
KKT point for the constrained pseudoconvex problem~\Cref{eq:lrconst}. This is
not an easy task. To the author's knowledge, closed formulas for this
kind of box constrained minimization problems are not known even when
the objective is a quadratic. Nevertheless, using the projection of
$\gamma^*$ onto $[0,1]^t$ as an approximation to $\bar \gamma$ appears
to give good results in practice. We see this in our numerical tests in~\Cref{section:NumericalTests}.

Finally, observe that  the computation of $\gamma^*$ in formula \Cref{eq:optgammalowrank} might be
as expensive as finding the Newton direction without preconditioning,
since it requires the spectral or Cholesky decomposition. However,
under the framework of \Cref{ex:generalizedJacobians}, 
we now see in \Cref{rem:gammaapprox} that
we can get the following inexpensive and effective approximation
$\gamma = \gamma^*_{\mathrm{apr}}$, given by the expression
\begin{equation}
\label{eq:estgamma}
\left[\gamma^*_{\mathrm{apr}}\right]_i:= \frac{\tr(A)}{(n-t)\|u_i\|^2}, \quad
\forall i\in [t].
\end{equation}

\begin{prop}
\label{rem:gammaapprox}
Let 
$A\in\Snp, B\in\R^{n\times m}, U\in\R^{n\times t}, 
\text{with } t:= |\cI_0|$,
be defined as in \Cref{ex:generalizedJacobians}. For simplicity
 define the columns of the matrix $\bar B:= \begin{bmatrix}B_i\end{bmatrix}_{i\in\cI_+}$
with $r:= \rank(\bar B)<n$. Let $j\in \cI_0$ and let
$u_j, w_j$ be defined as in \Cref{thm:lowrankupdate}, and
let $\epsilon> 0$ be the Levenberg--Marquardt regularization parameter 
in~\Cref{eq:GJ-setting}. Then,
	\[
u_j\notin\range(\bar B) \implies
	\frac{\|u_j\|^2}{\|w_j\|^2}\le \epsilon\frac{\|u_j\|^2}{\dist(u_j,\range(\bar B))^2}.
	\]
We conclude that \cref{eq:estgamma} provides an efficient estimate 
of $\gamma^*$ in formula \Cref{eq:optgammalowrank}.
\end{prop}
\begin{proof}
Let $x\in\range(\bar B)$, $y\in \nul(\bar B^T)$ such that $u_j = x+y$. Then,
	\[
	\begin{aligned}
		{\|w_j\|^2} &= u_j^TQD^{-1}Q^Tu_j\\
		&= (x+y)^TQD^{-1}Q^T(x+y)\\
		&= x^TQD^{-1}Q^Tx + 2x^TQD^{-1}Q^Ty + y^TQD^{-1}Q^Ty\\
		&= \sum_{\ell = 1}^{r}\frac{1}{\lambda_\ell + \epsilon}(q_\ell^Tx)^2 + 0 + \sum_{\ell = r+1}^{n}\frac{1}{\epsilon}(q_\ell^Ty)^2 \\
		&\ge \sum_{\ell = r+1}^{n}\frac{1}{\epsilon}(q_\ell^Ty)^2\\
		&= \frac{1}{\epsilon}\sum_{\ell=r+1}^{n}(q_\ell^Ty)^2 = \frac{1}{\epsilon}\sum_{\ell=1}^{n}(q_\ell^Ty)^2 = \frac{1}{\epsilon}\|y\|^2,
	\end{aligned}
	\]
	where $q_\ell$ is $\ell$-th column of $Q$. Since $\|y\|=\dist(u_j,\range(\bar B))$,
	\[
	\frac{\|u_j\|^2}{\|w_j\|^2}\le \epsilon\frac{\|u_j\|^2}{\dist(u_j,\range(\bar B))^2}.
	\]
\end{proof}

\begin{remark}
We note that the approximate $\omega$-optimal conditioning obtained in~\Cref{eq:estgamma} is strictly related to a popular choice of $\gamma$ appearing in the literature (see, e.g.,~\cite{CensorMoursiWeamsWolk:22}), namely, taking 
\begin{equation}\label{eq:optdiagforgenjac}
	\gamma_i:= \frac{1}{\|u_i\|^2},  \quad \forall i\in[t].
\end{equation}
Indeed, the updates resulting from~\Cref{eq:estgamma}   and~\Cref{eq:optdiagforgenjac} only differ in a scaling given by $\trace(A)/(n-t)$.
Recall from \Cref{prop:precond}~\Cref{item:optdiagprecond} that  the
selection of~\Cref{eq:optdiagforgenjac} corresponds to the
$\omega$-optimal diagonal preconditioner of the matrix $U$, i.e., it
aims to minimize the $\omega$-condition number of only the update term.
In contrast, our proposed approach aims to minimize  $\omega$ for the
whole matrix $A(\gamma) = A + U\Diag(\gamma)U^T$. Numerical comparisons
between both approaches are presented in the numerics in
\Cref{table:optgammapcg} and \Cref{fig:perfprofilespcg}, below.
\end{remark}




%
%

\section{Numerical Tests}
\label{section:NumericalTests}
We now present empirics for: the various preconditioners
\Cref{sect:empirprecond}; and the optimal preconditioned
low rank updates~\Cref{sect:numeriLowRank}.
The experiments were done on: Intel Core i7-12700H   2.30 GHz with 16GB
RAM, under Windows 11 (64-bit). We used  \textsc{Matlab} version 2024a.
The \textsc{Matlab} source code and data of all the experiments is available at \href{https://github.com/DavidTBelen/omega-condition-number.git}{https://github.com/DavidTBelen/omega-condition-number}.

\subsection{Comparisons of Preconditioners; Positive Definite Systems}
\label{sect:empirprecond}

In this section, we analyze the performance of an iterative method for
approximately solving  positive definite linear 
systems subject to different
preconditioning strategies. Specifically, we compare  the
$\omega$-optimal diagonal and
incomplete upper triangular $\omega$-optimal preconditioners introduced
above  with state-of-the-art preconditioners, e.g.,~the incomplete 
Cholesky preconditioner. Our test
enviroment follows the line of the extensive numerical comparisons
presented in the survey \cite{MR3638573}.

\subsubsection{Test Enviroment}
\label{subsect:test_enviroment}
The problems used in our experiment are all constructed with data from
the SuiteSparse Matrix Collection~\cite{Kolodziej2019}. We  consider the
symmetric positive definite matrices in this repository whose number of
rows (columns)  range from $5,000$ to $30,000$; but without
``duplicates'' (i.e., without similar matrices belonging to the same group). 
The right hand side of our linear system $b=e$, is always set as the 
vector of all ones. 

As the  iterative method for solving the positive definite linear systems,
we consider the implementation of the \textit{Preconditioned Conjugate
Gradients Method} given by \textsc{Matlab'}s built-in
function \textbf{pcg}. This is \textsc{Matlab}'s benchmark iterative solver for 
positive definite linear systems. In all our
experiments, our stopping criterion for \textbf{pcg} is when the
relative residual reaches a tolerance smaller than $10^{-6}$, i.e., 
\[
\frac{\|Wx-b\|}{\|b\|} < 10^{-6}.
\]

 Finally, in order to avoid ``trivialities'', we discard matrices
that generate problems that can be solved to the desired
tolerance in less than $10$ seconds by \textbf{pcg} with \emph{no
preconditioner}. This leaves a subset of $16$ matrices whose specific
characteristics are detailed in \Cref{tabl:exprecit}.
In the following we use $P$ to denote the set of these $16$ problems.

\subsubsection{Preconditioning Strategies}

We use the following strategies (with acronyms):

\begin{itemize}
\item \textbf{No preconditioning} (NONE).

\item The $\omega$-\textbf{optimal diagonal preconditioner} (DIAG)
given by \cref{eq:optdiagscal}.

\item The $\omega$-\textbf{optimal incomplete upper triangular
preconditioner} (ITRIU) given by \cref{eq:omegaoptincchol}. The dimension  $k$ of the triangular block is chosen according to the nonzero entries $nnz(W)$ of the matrix of interest $W$ as 
\index{$\lceil\cdot \rceil$, ceiling}
\index{ceiling, $\lceil\cdot \rceil$}
\[
k = \left\lceil \frac{1}{2} \left(1+\sqrt{1+\frac{4}{5} nnz(W)}\right) \right\rceil+1,
\]
where $\lceil\cdot \rceil$ is \emph{ceiling}.
The motivation on this choice resides in obtaining a
preconditioner with fewer nonzero entries than in $W$, i.e., $t(k-1) <<
nnz(W)$. The last summand $1$ ensures that the preconditioner is not
diagonal. 

\item \textbf{Incomplete Cholesky factorization} (ICHOL).  
\label{page:icholmatlab}
This preconditioning strategy consists in considering a Cholesky factorization of $W$, given by $LL^T$, but where some of the entries of $L$ are ignored agreeing with the sparsity pattern of $W$. The preconditioned system then becomes
\[
L^{-1} W L^{-T} y = L^{-1} b, \quad y = L^T x.
\]
We use \textsc{Matlab}'s \textbf{ichol} to construct $L$  and use the
options of the \textbf{pcg} solver for  solving the preconditioned
system without constructing $L^{-1}$ explicitly, as that could lead to
the loss of sparsity. To ensure that the process does not break down
(which can happen if a non positive pivot is encountered) we shift $W$
and obtain  an approximation of $W + \alpha\Diag(\diag W)$.
We make use of two different choices for the scaling factor~$\alpha$.
The first of them, denoted below as ICHOL(1) ,  uses the recommended
tuning in  the \textsc{Matlab} Help Center. However, as can be observed
in our tests below, this leads to a larger number of iterations of
\textbf{pcg} than what is expected from an incomple Cholesky
preconditioning. For instance, ICHOL(1) does not improve  ITRIU in this
aspect. Hence, we include a second choice of $\alpha$  taken two orders
of magnitude smaller than the recommended parameter in Matlab Help
Center. This significantly reduces the number of iterations required by \textbf{pcg}. In the experiments below, ICHOL(2) refers to this latter choice of $\alpha$.

\end{itemize}

\subsubsection{Performance Profile}
\label{subsect:performance_profile}

Besides illustrating the output from the experiments as displayed in
\Cref{tabl:exprecit,table:exp_prec_time,table:exp_prec_res,table:exp_prec_time_prec}, we 
 also employ performance profile plots, e.g.,~\cite{MR1875515}.
These plots are constructed as follows. Let 
$\Gamma:=\{\text{NONE, DIAG, ITRIU, ICHOL(1), ICHOL(2)}\}$ be the set of
preconditioners for our
comparisons. For each $p\in P$ and $\gamma \in\Gamma$, we denote as
$t_{p,\gamma}$ the measure we want to compare. In particular, we will
separately consider the number of iterations and the time required for
solving the system (to
the desired tolerance) for the preconditioned linear system
described in \Cref{subsect:test_enviroment}. In the cases where we consider a preconditioned system (i.e., all except NONE), the time for computing
the preconditioner is also included in $t_{p,\gamma}$, i.e.,
\[
\begin{aligned}
t_{p,\gamma} &=  \{\text{time for computing the preconditioner}\} \\
 &\quad + \{\text{time for solving  the  preconditioned problem by \textbf{pcg}}  \}.
\end{aligned}
\]
Then, for every problem $p\in P$ and every $\gamma \in \Gamma$, we define the performance ratio as
\begin{equation*}
r_{p,\gamma} :=
\left\{
\begin{array}{ll}
 \frac{ t_{p,\gamma}}{\min\{ t_{p,\gamma} \, : \, \gamma \in \Gamma\}} &\text{ if convergence test passed,} \\
 +\infty &\text{ if convergence test failed.}
\end{array}
\right.
\end{equation*}
In our experiments, a convergence test \emph{passed} if it succeeded in
solving the linear system with the required relative residual tolerance
in less than $100,000$ iterations, and otherwise it \emph{failed}. Note that the best performing preconditioner with respect to the measure under study (time or number of iterations), say $\tilde \gamma$, for problem $p$ will have performance ratio $r_{p,\tilde \gamma} = 1$. In contrast, if the preconditioner $\gamma$ underperforms in comparison with $\tilde\gamma$, but still manages to pass the test,  then 
\[
r_{p,\gamma} = \frac{ t_{p, \gamma}}{t_{p, \tilde{\gamma}}} > 1
\]
is the ratio between the overall time (resp., number of iterations) required for solving the problem $p$ for this particular choice and the time (resp., number of iterations) employed by $\tilde \gamma$. Consequently, the larger the value of $r_{p,\gamma}$, the worse the preconditioner $\gamma$ performed for problem $p$.

Finally, the performance profile of $\gamma \in \Gamma$ is defined as
\[
\rho_{\gamma}(\tau) := \frac{1}{|P|} \size\left\{ p \in P \, : \, r_{p, \gamma} \leq \tau \right\},
\]
where $|P|$ is the number of problems in $P$. This can be understood as
the relative portion of times that the performance ratio $r_{p,\gamma}$
is within a factor of $\tau \geq 1$  of the best possible performance
ratio. In particular, $\rho_{\gamma}(1)$ represents the number of
problems  where $\gamma$ is the best choice. Also, the existence of a $\tau\geq 1$ such that $\rho_{\gamma}(\tau) = 1$,  indicates that $\gamma$ passed the convergence test for every single problem in $P$.
In \Cref{fig:perf_profiles_prec}, we display our performance profiles,
with $\log_2$ scale on $\tau$.
\begin{figure}[ht!]\centering
 \includegraphics[height=.39\textwidth]{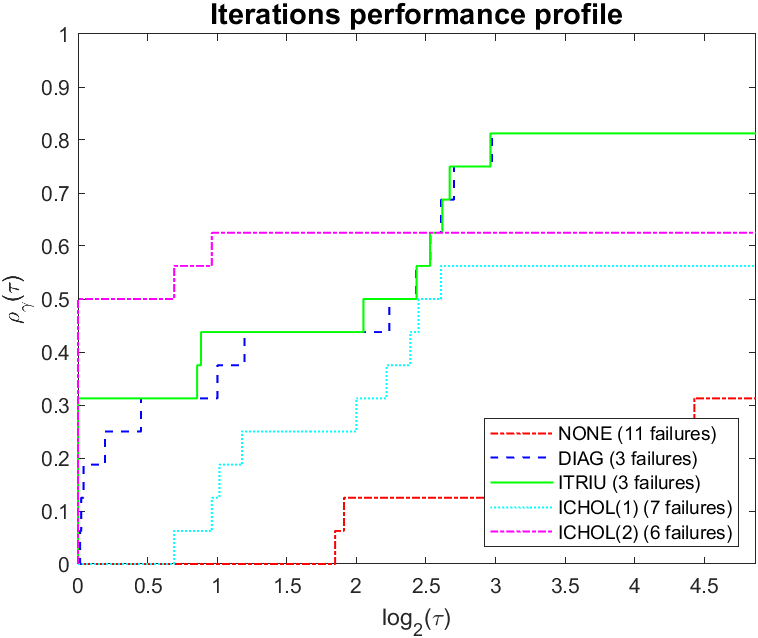}\hfill
 \includegraphics[height=.39\textwidth]{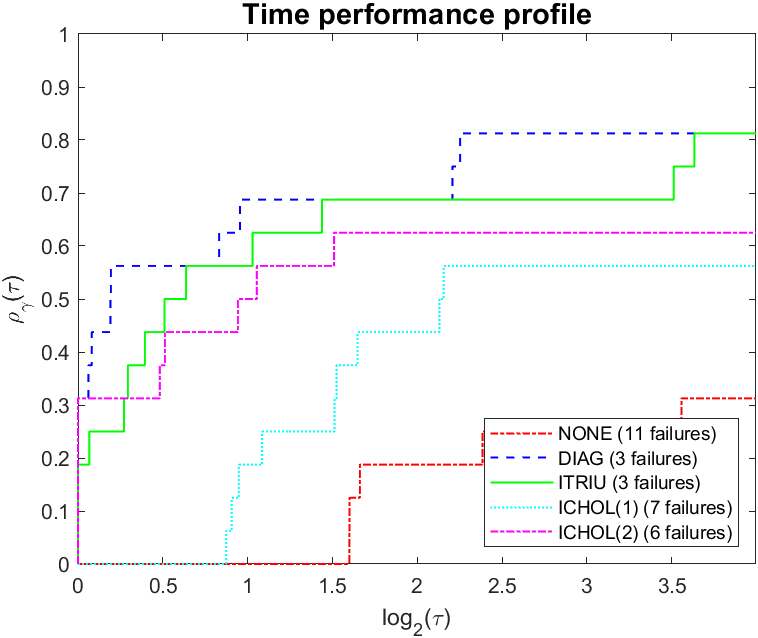}
 \caption{Iterations and time performance profiles for  solving the system with the different choices of preconditioner.}\label{fig:perf_profiles_prec}
\end{figure}

\subsubsection{Summary of the Empirics} 
\label{sect:summempirJac}

Our empirics suggest that the diagonal (DIAG) and the incomplete upper diagonal
$\omega$-optimal (ITRIU) preconditioners have  similar behaviour.   More precisely, ITRIU seems to  reduce   the number of iterations required by \textbf{pcg} in comparison to  DIAG (see \Cref{tabl:exprecit}).   This can be understood as a benefit of the additional reduction of the $\omega$-condition number furnished by the incomplete upper triangular block.
 In general, the incomplete Cholesky (ICHOL)(2)  appears to be the best solver for reducing the  number of iterations,  however it also fails to reach the desired relative residual accuracy more often (6 times) than the $\omega$-optimal preconditioners (3 times). The
residuals obtained by each one of the methods can be checked in \Cref{table:exp_prec_res}. 
Although the numerical results suggest that an appropriate incomplete Cholesky factorization provides a superior preconditioning, we note that the  $\omega$-optimal preconditioners  are more \emph{stable}, as the performance of the  incomplete Cholesky is usually influenced by the choice of the scaling factor $\alpha$. Indeed, a better result of \textbf{pcg} is obtained with the choice of  a small scaling factor $\alpha$ in ICHOL, but if $\alpha$ is too small a non positive pivot can be encountered and ICHOL may fail to be computable. In contrast, the $\omega$-preconditioners can always be computed without the need to manually set additional parameters.


\subsection{$\omega$-Optimal Low Rank Updates for Generalized Jacobians}
\label{sect:numeriLowRank}

We now present tests with different choices of $\gamma$ for efficient iterative
solutions of linear systems of the form \fbox{$A(\gamma) x= b$},
where $A(\gamma)$ is given in~\cref{eq:Aepsgamma}. 
We use \textsc{Matlab}'s builtin preconditioned conjugate gradient
function \textbf{pcg}. We focus
our attention on the case where $A(\gamma)\in \Snpp$ is a
low rank update that appears in choosing subgradients in nonsmooth Newton
methods, see~\Cref{ex:generalizedJacobians}. Our aim is to improve
conditioning to improve convergence, thus we call this 
\textdef{$\gamma$-conditioning}.

\subsubsection{Problem Generation and Definitions of $\gamma$}
\label{sect:probgenprec}
Specifically, we generate random instances as follows:
\begin{itemize}
\item  Define
\begin{equation}
\label{eq:Aepsgamma}
A(\gamma) := A +\epsilon I  + U \Diag(\gamma) U^T;
\end{equation}
\item
$\epsilon$ is a random   number in the interval $[10^{-7},10^{-9}]$;
\item 
$A= A_0^TA_0$ with $A_0\in\mathbb{R}^{r\times
n}$ a normally distributed random sparse matrix with density
at most $0.5/\log(n)$; $r\in [n/2+1,n-1]$ is a random integer;
\item 
$t \in [2,r/2]$ is the randomly chosen rank of the update,
$U\in\mathbb{R}^{n\times t}$ is a normally distributed random 
sparse matrix of density at most $1/\log(n)$;
\item The right hand side, $b$, is chosen as the sum of two random vectors in the range of $A$ and $U$, respectively. More precisely, 
\[
b = A \, b^1 + U\, b^2,
\]
with $b^1\in\R^n$ and $b^2\in\R^t$ vectors randomly generated using the standard normal distribution.
\end{itemize}
As explained in~\Cref{ex:generalizedJacobians}, in this application the 
$\gamma$ for conditioning is required to belong to the hypercube
$[0,1]^t$. Therefore, in our experiments we test  the performance of four
different choices of $\gamma$-conditioning:
\begin{itemize}
\item 
($\gamma=0$): the zero vector ;

\item ($\gamma=e$): the vector of ones;

\item  ($\gamma = u^{-2}$):
projection onto $[0,1]^t$ 
of the $\omega$-optimal diagonal preconditioner for the last term,
$U\Diag(\gamma)U^T$, of \Cref{eq:Aepsgamma}.
Recall from \Cref{prop:precond}, \Cref{item:optdiagprecond} that $\gamma$ is given by
\[
\gamma_i = \min\{ 1, 1/\|u_i\|^{2}\},\; i\in[t],
\]
where $u_i$ denotes the $i$th column of $U$; 
\item ($\gamma = \gamma^*_p$) projection  of $\gamma^*$, obtained
in~\Cref{thm:lowrankupdate}, onto $[0,1]^t$;
\item 
($\gamma = \gamma^*_{\mathrm{apr}}$):
projection of the approximated $\omega$-optimal 
obtained in~\Cref{eq:estgamma}, onto $[0,1]^t$.
\end{itemize}

\subsubsection{Descriptions of Parameters and Outputs}
For each dimension $n\in\{1000,2000,3000,5000\}$, we generate
$10$ instances of random problems and solve the corresponding systems 
with \textsc{Matlab}'s \textbf{pcg} and with 
the  five different choices of $\gamma$-conditioning.

\Cref{table:optgammapcg} shows the average over the $10$ instances of:
$\kappa$- and $\omega$-condition numbers of every $A(\gamma)$;
relative residual; number of iterations; time used by
\textbf{pcg} for every choice of $\gamma$; and time for computing each (nontrivial) $\gamma$. Both the spectral and Cholesky decompositions are implemented for computing $\gamma=\gamma^*_p$ but the table only contains the time for more efficient approach. We indicated the corresponding approach in the last column as well.
We stop if a tolerance of $10^{-12}$ is reached or the maximum
$50,000$ iterations is exceeded.
We use the origin as our initial starting point.
 
 
 \begin{table}[htp!]
 	\resizebox{\columnwidth}{!}{%
 		\begin{tabular}{|c|c|c|c|c|c|c|c|c|} \hline
$n$ & $\gamma$ & $\kappa(A(\gamma))$ & $\omega(A(\gamma))$ & Rel.Res & Iter & T. Total & T. Solve & T. $\gamma^*$ \\
  \hline 
\multirow{4}{*}{1000} & 0 &2.3249e+09 & 1.0173e+04 & 1.5146e-01 & 2734.30 &  0.0218 & 0.0218 & -\\ 
 \cline{2-9} 
& e &2.2193e+11 & 4.7689e+03 & 1.7750e-06 & 4187.40 &  0.8181 &   0.8181 & -\\ 
 \cline{2-9} 
& $u^{-2}$ &8.5539e+09 & 2.2114e+03 & 3.8172e-07 & 2730.10 &  0.0810 & 0.0807 & 0.0003\\ 
 \cline{2-9} 
& $\gamma^{\ast}_p$ &1.0722e+10 & 2.2095e+03 & 1.8169e-07 & 2788.10 &  0.0880 & 0.0748 & 0.0132 (C)\\ 
 \cline{2-9} 
& $\gamma^{\ast}_{\text{apr}}$ &1.0432e+10 & 2.2095e+03 & 1.5855e-07 & 2821.80 &  0.0757 & 0.0752 & 0.0005\\ 
 \hline 
 \multirow{4}{*}{2000} & 0 &2.0729e+12 & 2.2127e+04 & 1.8829e-07 & 230.70 &  0.5887 & 0.5887 & -\\ 
 \cline{2-9} 
& e &3.4235e+12 & 6.5292e+02 & 9.2055e-13 & 425.50 &  1.4527 & 1.4527 & -\\ 
 \cline{2-9} 
& $u^{-2}$ &1.8491e+12 & 8.9700e+02 & 9.1536e-13 & 1364.40 &  0.7545 & 0.7541 & 0.0003\\ 
 \cline{2-9} 
& $\gamma^{\ast}_p$ &2.0726e+12 & 5.6538e+02 & 9.1902e-13 & 376.20 &  0.6021 & 0.2319 & 0.3702 (S)\\ 
 \cline{2-9} 
& $\gamma^{\ast}_{\text{apr}}$ &2.0644e+12 & 5.6538e+02 & 9.1737e-13 & 376.30 &  0.2391 & 0.2368 & 0.0023\\ 
 \hline 
 \multirow{4}{*}{3000} & 0 &4.4961e+12 & 1.9718e+04 & 7.4824e-08 & 586.20 &  3.3928 & 3.3928 & -\\ 
 \cline{2-9} 
& e &3.6078e+12 & 3.9795e+03 & 9.3498e-13 & 261.70 &  1.5414 & 1.5414 & -\\ 
 \cline{2-9} 
& $u^{-2}$ &3.6699e+12 & 4.3747e+03 & 9.1465e-13 & 917.80 &  1.0956 & 1.0954 & 0.0002\\ 
 \cline{2-9} 
& $\gamma^{\ast}_p$ &3.6544e+12 & 3.9795e+03 & 9.4326e-13 & 261.60 &  1.6990 & 0.3219 & 1.3770 (S)\\ 
 \cline{2-9} 
& $\gamma^{\ast}_{\text{apr}}$ &3.5688e+12 & 3.9795e+03 & 9.4326e-13 & 261.60 &  0.3247 & 0.3199 & 0.0048\\ 
 \hline 
 \multirow{4}{*}{5000} & 0 &1.0028e+13 & 3.0421e+04 & 2.6256e-07 & 698.80 &  11.5600 & 11.5600 & -\\ 
 \cline{2-9} 
& e &1.1709e+13 & 8.9242e+02 & 9.3629e-13 & 362.90 &  5.9142 & 5.9142 & -\\ 
 \cline{2-9} 
& $u^{-2}$ &8.2778e+12 & 1.4249e+03 & 1.2583e-09 & 1563.00 &  6.3804 & 6.3783 & 0.0021\\ 
 \cline{2-9} 
& $\gamma^{\ast}_p$ &8.7440e+12 & 8.2755e+02 & 9.6946e-13 & 344.30 &  8.3604 & 1.4008 & 6.9596 (S)\\ 
 \cline{2-9} 
& $\gamma^{\ast}_{\text{apr}}$ &8.8089e+12 & 8.2755e+02 & 9.5456e-13 & 344.30 &  1.4175 & 1.4026 & 0.0149\\ 
 \hline 
 \end{tabular}
}
 	\caption{For different dimensions $n$, every choice of $\gamma$ for
 		updating, average of $10$ instances: $\kappa$- and
 		$\omega$-condition numbers of $A(\gamma)$; residual; number
 		of iterations; total time (in seconds); solve time; time for computing $\gamma^*$,
 		(S) stands for spectral and (C) for Cholesky decomposition.}
 	\label{table:optgammapcg}
 \end{table}

We also use performance profiles to compare the different choices of $\gamma$; 
details in~\Cref{subsect:performance_profile}. 
Again, let $P$ denote the set of problems, and now set
$\Gamma:=\{0,e,u^{-2},\gamma^*_p,\gamma^*_{\mathrm{apr}}\}$ as the set of $\gamma$ conditioners.
We separately consider the number of iterations and the time required for
solving the system $A(\gamma) \, x = b$. We set the time
\[
t_{p,\gamma^*_p} = \{\text{time for solving  the system }  A(\gamma) \, x = b\} + \{\text{time for computing } \gamma^*_p\}.
\]
The latter quantity is taken as the minimum between the spectral and Cholesky approach.
For constructing the performance ratio in this setting, we consider that
a convergence test passed, rather than failed, if it succeeded in
solving the linear system with the required tolerance in less than
$50,000$ iterations. The output appears in~\Cref{fig:perfprofilespcg}.


\begin{figure}[ht!]\centering
 \includegraphics[height=.39\textwidth]{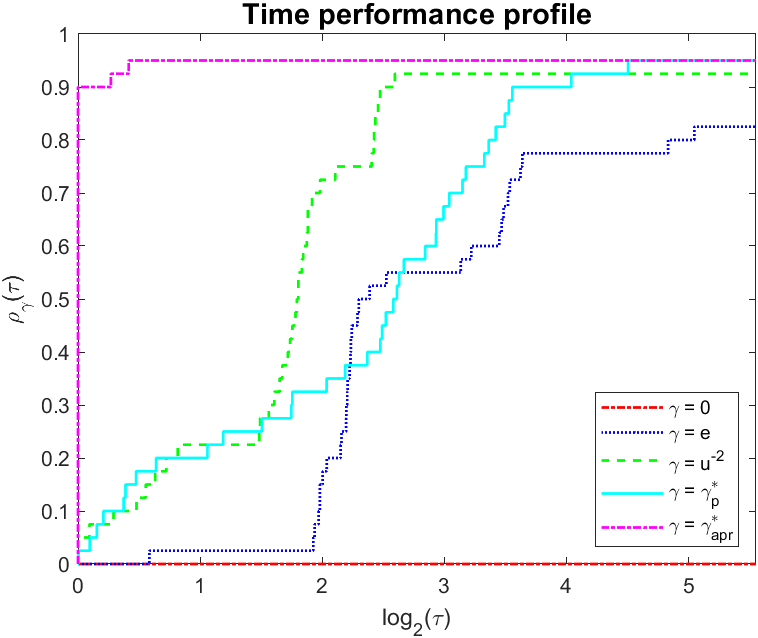}\hfill
 \includegraphics[height=.39\textwidth]{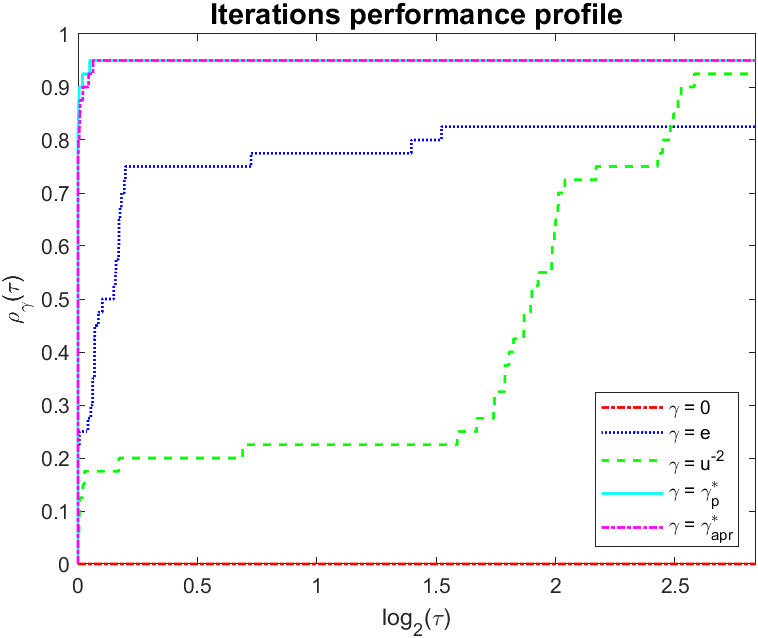}
 \caption{Time, iterations performance profiles; 
for system $A(\gamma)x=b$ with different choices
of $\gamma$ in \Cref{sect:probgenprec}; using  \textsc{Matlab}'s
\textbf{pcg}.}\label{fig:perfprofilespcg}
\end{figure}

\subsubsection{Summary of Empirics} 
\label{sect:summempir}

Firstly, we observe that \textbf{pcg} with no $\gamma$-conditioning ($\gamma{=}0$)  fails to achieve the desired residual, so the problems under consideration are sufficiently ill-conditioned. 
The performance profiles reveal that, in more than $90\%$ of the tested
instances,  the $\omega$-optimal conditioning leads to a problem that can
be solved with the least number of iterations. However, the computation of
the optimal conditioning $\gamma^*_p$ is too time expensive (see \Cref{table:optgammapcg}) which does not
make it advantageous in general\footnote{Regarding the different approaches for computing $\gamma=\gamma^*_p$, we want to mention that although obtaining the Cholesky
decomposition $A=LL^T$ is in general
less costly than computing its eigenvalue decomposition, the
computation of the $\omega$-optimal conditioning in this case requires
solving the system $LW = U$, see \Cref{cor:lrChol}. This means that, for larger dimensions,
employing the spectral decomposition for computing $\gamma^*$
seems to be more time efficient.}. Nonetheless, we observe that the approximated
 $\omega$-optimal conditioning $\gamma^*_{apr}$ maintains the benefits
 of  $\gamma^*_p$ in  terms of both solve time and number of iterations. In addition,
 it can be computed very efficiently which makes it the best option among all $\gamma$-conditionings.


\section{Conclusion}\label{sect:conclusion}
In this paper we have studied $\omega$, a nonclassical matrix 
condition number  formed as  the ratio of the arithmetic
and geometric means of eigenvalues. We have shown that $\omega$ has many
advantages over $\kappa$, the classic condition
number formed as the ratio of the largest to smallest eigenvalues, as
the latter is more of a \emph{worst case} condition number.
Moreover, the fact that $\kappa(A) = \kappa(A^{-1})$, (not true for
$\omega$) is misleading as
the conditioning of a linear system
$\cond(A) \neq \cond(A^{-1})$, and it is the latter that
in general needs reducing.  In fact, for linear systems $Ax=b$, we illustrated
empirically that it is $\omegatwo\cong \omega(A^{-2})$ 
that needs reducing. In addition, we found a new optimal 
diagonal preconditioner using this new measure, and we verified
its strengths empirically. This is currently of theoretical interest
only as exploiting $\omegatwo$ without evaluating
$A^{-1}$ first is still an open question.

We have used the differentiability and simplicity
of trace and determinant in $\omega(A)$ to find
optimal parameters for improving condition numbers for:
low rank updates that arise in the
context of  nonsmooth Newton methods; and for preconditioning for 
linear systems. We empirically show that the $\omega$-optimal preconditioners obtained in this work improve the performance of iterative methods. 
\label{page:positivetypo}

The $\omega$-condition
number, when compared to the classical $\kappa$-condition number,
is significantly more closely correlated to reducing the number of iterations
and time for iterative methods for positive definite linear systems.
This matches known results that show that preconditioning
for clustering of eigenvalues helps in iterative methods, i.e.,~using
all the eigenvalues rather than just the largest and smallest is desirable.
This is further evidenced by the empirics that show that $\omega(A)$ is
a significantly better estimate of 
the true conditioning of a linear system, i.e.,~how perturbations in the
data $A,b$ effect the solution $x$.

Finally, we have shown that an exact evaluation of $\omega(A)$ can be
found using either the Cholesky or LU factorization. This is in contrast
to the evaluation of $\kappa(A)$ that requires a spectral
decomposition or a $\|A\|\|A^{-1}\|$ evaluation.

In a future study we hope to continue on exploiting the measure
$\omega(A^{-2})$ by finding appropriate approximations,
e.g.,~\cite{WU2016828}.
The complexity of the denominator is unchanged, but
estimating the trace of an \emph{inverse} is a more difficult problem.
The condition number is also important in complexity analysis of
optimization methods, e.g.,~in the convergence of conjugate gradient
type methods.
We hope to avoid the worst case analysis to get a more average case
using $\omega$.

Finally we note that
the results we presented here can be extended beyond $A$ positive
definite by replacing eigenvalues with singular values in the definition
of $\omega(A)$.

\paragraph{Acknowledgements}
The authors would first like to thank Haesol Im and Walaa M. Moursi
for many useful and helpful conversations. We would also like to thank
two referees for many helpful comments that helped improve this paper.

\paragraph{Funding}
The author
D. Torregrosa-Bel\'en was partially supported by Centro de Modelamiento Matemático (CMM) BASAL fund FB210005 for center of excellence from ANID-Chile
and by Grants PGC2018-097960-B-C22
and PID2022-136399NB-C21 funded by ERDF/EU and by
MICIU/AEI/ 10.13039/501100011033.
Also by Grant PRE2019-090751 funded by
``ESF Investing in your future' and by MICIU/AEI/10.13039/501100011033.
\\All the authors were partially
supported by the National Research Council of Canada.

\paragraph{Data Availability} 
The \textsc{Matlab} source code and data of all the experiments in this manuscript are available at \href{https://github.com/DavidTBelen/omega-condition-number.git}{https://github.com/DavidTBelen/omega-condition-number}.

\section*{Declarations}

\paragraph{Conflict of interest} The authors declare they have no conflict of interest.

\newpage
\appendix

\section{Further Tables and $\omega$-Optimal Preconditioners}
\label{sect:furhterpreconditioners}
\label{sect:tablesapp}

\subsection{Tables}
We now present the tables for the empirics for the three
preconditioners in~\Cref{sect:empirprecond}. 
We use  matrices from the SuiteSparse Matrix Collection.

\begin{table}[h!]
\tiny
\resizebox{\columnwidth}{!}{%
\begin{tabular}{|l|rr|r|r|r|r|r|} \hline
name & $n$ & $nnz(W)$ & NONE & DIAG & ITRIU & ICHOL(1) & ICHOL(2)  \\
  \hline 
\verb|mhd4800b| &4800 & 27520  & $>$97633  & 26  & 19  & 37  & 37 \\ 
\verb|s3rmt3m3| &5357 & 207123  & $>$99172  & 14283  & 14134  & $>$15207  & $>$14300 \\ 
\verb|ex15| &6867 & 98671  & $>$98296  & 46299  & 45029  & $>$99102  & $>$47275 \\ 
\verb|bcsstk38| &8032 & 355460 & - & 10104  & 8837  & 14264  & 14267 \\ 
\verb|aft01| &8205 & 125567  & $>$8452  & 786  & 780  & 610  & 100 \\ 
\verb|nd3k| &9000 & 3279690  & 6012  & 9245  & 8632  & $>$8007  & 1599 \\ 
\verb|bloweybq| &10001 & 49999 & -& - & $>$11 & -& -\\ 
\verb|msc10848| &10848 & 1229776  & 56719  & 5274  & 4767  & 5328  & 2634 \\ 
\verb|t2dah_e| &11445 & 176117  & $>$99495  & 33  & 29  & 28  & 7 \\ 
\verb|olafu| &16146 & 1015156  & $>$90196  & 28028  & 22572  & 27670  & 12232 \\ 
\verb|gyro| &17361 & 1021159  & 28942  & 11605  & 11684  & 9964  & 1904 \\ 
\verb|nd6k| &18000 & 6897316  & 6589  & 9857  & 10574  & 8515  & 1831 \\ 
\verb|raefsky4| &19779 & 1316789 & - & 82865  & 81551  & $>$87212  & $>$17990 \\ 
\verb|LFAT5000| &19994 & 79966 & - & $>$4984  & $>$5037 & -& -\\ 
\verb|msc23052| &23052 & 1142686 & - & $>$91699  & $>$91700  & $>$99722  & $>$98530 \\ 
\verb|smt| &25710 & 3749582  & 9764  & 3343  & 3273  & 2803  & 514 \\ 
 \hline 
 \end{tabular}
}
\caption{preconditioners: number of iterations}
\label{tabl:exprecit}
\end{table}

\begin{table}[h!]
\tiny
\resizebox{\columnwidth}{!}{%
\begin{tabular}{|l|rr|r|r|r|r|r|} \hline
name & $n$ & $nnz(W)$ & NONE & DIAG & ITRIU & ICHOL(1) & ICHOL(2)  \\
  \hline 
\verb|mhd4800b| &4800 & 27520  & $>$2.70  & 0.00  & 0.00  & 0.01  & 0.00 \\ 
\verb|s3rmt3m3| &5357 & 207123  & $>$8.54  & 1.76  & 2.16  & $>$2.07  & $>$2.75 \\ 
\verb|ex15| &6867 & 98671  & $>$6.54  & 2.63  & 2.76  & $>$14.68  & $>$11.94 \\ 
\verb|bcsstk38| &8032 & 355460  & $>$18.72  & 1.94  & 1.83  & 3.36  & 5.22 \\ 
\verb|aft01| &8205 & 125567  & $>$0.66  & 0.07  & 0.07  & 0.15  & 0.03 \\ 
\verb|nd3k| &9000 & 3279690  & 13.13  & 19.78  & 47.35  & $>$19.50  & 4.15 \\ 
\verb|bloweybq| &10001 & 49999  & $>$5.41  & $>$5.35  & $>$5.97  & $>$25.40  & $>$24.97 \\ 
\verb|msc10848| &10848 & 1229776  & 38.88  & 3.78  & 3.30  & 6.19  & 6.35 \\ 
\verb|t2dah_e| &11445 & 176117  & $>$12.74  & 0.01  & 0.01  & 0.02  & 0.01 \\ 
\verb|olafu| &16146 & 1015156  & $>$59.21  & 16.55  & 15.86  & 30.59  & 22.62 \\ 
\verb|gyro| &17361 & 1021159  & 19.61  & 7.39  & 8.50  & 18.61  & 6.47 \\ 
\verb|nd6k| &18000 & 6897316  & 30.31  & 46.16  & 124.02  & 43.77  & 10.00 \\ 
\verb|raefsky4| &19779 & 1316789  & $>$75.59  & 62.72  & 89.31  & $>$223.45  & $>$68.90 \\ 
\verb|LFAT5000| &19994 & 79966  & $>$10.08  & $>$10.16  & $>$10.27  & $>$24.52  & $>$25.49 \\ 
\verb|msc23052| &23052 & 1142686  & $>$78.53  & $>$77.28  & $>$79.37  & $>$106.01  & $>$173.72 \\ 
\verb|smt| &25710 & 3749582  & 25.31  & 8.63  & 13.13  & 15.18  & 4.85 \\ 
 \hline 
 \end{tabular}
}
\caption{preconditioners: total time}
\label{table:exp_prec_time}
\end{table}

\begin{table}[h!]
\tiny
\resizebox{\columnwidth}{!}{%
\begin{tabular}{|l|rr|r|r|r|r|r|} \hline
name & $n$ & $nnz(W)$ & NONE & DIAG & ITRIU & ICHOL(1) & ICHOL(2)  \\
  \hline 
\verb|mhd4800b| &4800 & 27520  & -  & 5.860e-02  & 7.919e-02  & 5.221e-05  & 4.173e-05 \\ 
\verb|s3rmt3m3| &5357 & 207123  & -  & 5.149e-02  & 4.339e-02  & -  & - \\ 
\verb|ex15| &6867 & 98671  & -  & 2.327e+00  & 2.335e+00  & -  & - \\ 
\verb|bcsstk38| &8032 & 355460  & -  & 1.148e-01  & 6.272e-02  & 8.006e-05  & 7.169e-05 \\ 
\verb|aft01| &8205 & 125567  & -  & 2.083e-04  & 3.355e-04  & 8.085e-05  & 5.592e-05 \\ 
\verb|nd3k| &9000 & 3279690  & 9.084e-05  & 1.089e-04  & 5.620e-04  & -  & 8.861e-05 \\ 
\verb|bloweybq| &10001 & 49999  & -  & -  & -  & -  & - \\ 
\verb|msc10848| &10848 & 1229776  & 8.705e-05  & 2.156e-03  & 1.563e-03  & 1.002e-04  & 7.700e-05 \\ 
\verb|t2dah_e| &11445 & 176117  & -  & 1.338e-04  & 1.784e-03  & 8.938e-05  & 8.211e-05 \\ 
\verb|olafu| &16146 & 1015156  & -  & 2.182e-03  & 7.755e-04  & 1.118e-04  & 9.864e-05 \\ 
\verb|gyro| &17361 & 1021159  & 1.289e-04  & 1.955e-04  & 2.167e-04  & 1.234e-04  & 1.174e-04 \\ 
\verb|nd6k| &18000 & 6897316  & 1.324e-04  & 1.602e-04  & 7.317e-04  & 1.325e-04  & 1.327e-04 \\ 
\verb|raefsky4| &19779 & 1316789  & -  & 2.273e-01  & 2.573e-01  & -  & - \\ 
\verb|LFAT5000| &19994 & 79966  & -  & -  & -  & -  & - \\ 
\verb|msc23052| &23052 & 1142686  & -  & -  & -  & -  & - \\ 
\verb|smt| &25710 & 3749582  & 1.450e-04  & 2.729e-04  & 2.658e-04  & 1.530e-04  & 1.310e-04 \\ 
 \hline 
 \end{tabular}
}
\caption{preconditioners: residual $\|Wx-b\|$}
\label{table:exp_prec_res}
\end{table}

\begin{table}[htp!]
\tiny
\resizebox{\columnwidth}{!}{%
\begin{tabular}{|l|rr|r|r|r|r|} \hline
name & $n$ & $nnz(W)$  & DIAG & ITRIU & ICHOL(1) & ICHOL(2) \\
  \hline 
\verb|mhd4800b| &4800 & 27520  & 1.089e-03  & 3.193e-03  & 2.079e-03  & 7.248e-04 \\ 
\verb|s3rmt3m3| &5357 & 207123  & 6.965e-04  & 2.426e-03  & 1.644e-03  & 1.831e-03 \\ 
\verb|ex15| &6867 & 98671  & 4.206e-04  & 1.234e-03  & 9.327e-04  & 2.310e-03 \\ 
\verb|bcsstk38| &8032 & 355460  & 7.168e-04  & 5.815e-03  & 2.181e-03  & 2.136e-03 \\ 
\verb|aft01| &8205 & 125567  & 5.021e-04  & 2.030e-03  & 1.146e-03  & 1.925e-03 \\ 
\verb|nd3k| &9000 & 3279690  & 2.506e-03  & 7.995e-02  & 1.712e-02  & 1.676e-02 \\ 
\verb|bloweybq| &10001 & 49999  & 5.385e-04  & 1.365e-03  & 4.774e-04  & 4.115e-04 \\ 
\verb|msc10848| &10848 & 1229776  & 1.481e-03  & 8.259e-03  & 8.472e-03  & 2.943e-02 \\ 
\verb|t2dah_e| &11445 & 176117  & 7.269e-04  & 2.055e-03  & 2.599e-03  & 5.342e-03 \\ 
\verb|olafu| &16146 & 1015156  & 1.789e-03  & 1.623e-02  & 6.528e-03  & 1.793e-02 \\ 
\verb|gyro| &17361 & 1021159  & 1.730e-03  & 1.107e-02  & 1.736e-02  & 6.125e-02 \\ 
\verb|nd6k| &18000 & 6897316  & 5.121e-03  & 2.661e-01  & 3.337e-02  & 3.741e-02 \\ 
\verb|raefsky4| &19779 & 1316789  & 1.755e-03  & 2.275e-02  & 2.149e-02  & 6.287e-02 \\ 
\verb|LFAT5000| &19994 & 79966  & 8.536e-04  & 1.299e-03  & 8.915e-04  & 1.077e-03 \\ 
\verb|msc23052| &23052 & 1142686  & 2.056e-03  & 7.827e-03  & 5.936e-03  & 1.119e-02 \\ 
\verb|smt| &25710 & 3749582  & 3.886e-03  & 1.172e-01  & 5.734e-02  & 2.481e-01 \\ 
 \hline 
 \end{tabular}
}
\caption{Times (cpu) for computing the preconditioners}
\label{table:exp_prec_time_prec}
\end{table}

\subsection{$\omega$-Optimal Preconditioners}
\label{sect:furhterpreconditioners}

In this section we derive expressions for  $\omega$-optimal preconditioner matrices in different forms. The first one of them is a lower triangular two diagonal preconditioner. The second is a diagonal $+$ upper triangular preconditioner. The proofs of both results proceed similarly to Claim $1$ in \Cref{thm:incompleteuppertri}. Therefore, we will not reproduce the complete proofs and limit ourselves to highlight the main steps.

\subsection{Lower Triangular, Two Diagonal Preconditioning}
\label{sect:lowerTrTwoDiag}
In this section, we extend  the $\omega$-optimal diagonal scaling to an
$\omega$-optimal \textdef{lower triangular two diagonal scaling}.
We define \textdef{$\TDiag$} and 
\textdef{$\tdiag = \TDiag^*$} in obvious ways to construct the 
lower triangular two diagonal matrix from a vector and its adjoint.
Specifically, for a matrix $L=(L_{ij})_{i,j=1}^n \in \R^{n\times n}$, 
we get that
\[
\tdiag(L) = \begin{pmatrix}
         L_{1,1}\cr L_{2,2} \cr \ldots \cr L_{n,n} \cr
         L_{2,1}\cr L_{3,2} \cr L_{4,3}\cr  \ldots \cr L_{n,n-1} 
\end{pmatrix} =: \left( \begin{matrix} \phantom{1} \bar l \phantom{1} \cr \phantom{1} \hat l \phantom{1} \: \end{matrix}\right)\in \R^{n+(n-1)},
\]
while, given vectors $\bar{d} = (\bar{d}_1,\ldots,\bar{d}_n)^T \in \R^n $ and $\hat{d} = (\hat{d}_1,\ldots,\hat{d}_{n-1})\in\R^{n-1}$, we have
\[
\TDiag (\bar d,\hat d) = 
\begin{bmatrix}
\bar{d}_1 &0 & \ldots & \ldots   & \ldots &  0 \cr
\hat{d}_1& \bar{d}_2 & 0 & \ldots  &\ldots & 0 \cr
0 & \hat{d}_2 & \bar{d}_3& \vdots &  \vdots & 0  \cr
\vdots & \ldots & \ddots  & \ddots &  \vdots  & \vdots \cr
0 & \ldots &  \ldots& \hat{d}_{n-1} & \bar{d}_{n-1} & 0   \cr
0 &0& \ldots  &0 &  \hat{d}_{n-1} & \bar{d}_n   \cr
\end{bmatrix}.
\]
Note that $\TDiag :\R^{2n-1} \to \R^{n\times n}$ and
$\langle \TDiag(\bar d,\hat d), L\rangle = 
     \left\langle \dbardhat ,\tdiag(L)\right\rangle$, for any squared  matrix $L\in\R^{n\times n}$.

\begin{theorem}
\label{thm:Tdiags}
Let $W \in \Snpp$ and set
	\[
	\bar d^*_i = \begin{cases}
	\left(W_{i,i}-\frac{W_{i,i+1}^2}{W_{i+1,i+1}}\right)^{-1/2}
	=
\left(\frac{W_{i,i}W_{i+1,i+1}-W_{i,i+1}^2}{W_{i+1,i+1}}\right)^{-1/2},
& \text{if }i\in[n-1];\\
	W_{n,n}^{-1/2}, & \text{if }i=n
	\end{cases}
	\]
	and
	\[
\hat d^*_i  = 
        -\frac{W_{i,i+1}}{W_{i+1,i+1}}\bar d^*_i,\quad i\in [n-1].
	\]
Then the $\omega$-optimal lower triangular two diagonal
scaling of $W$ is given by
\begin{equation}
\label{eq:omegaobj}
(\bar d^*,\hat d^*) = \displaystyle \argmin_{(\bar d, \hat d) \in
\R^n_{++}
\times \R^{n-1}}   \omega (\bar d, \hat d), 
\end{equation}
where
\textdef{$\omega (\bar d, \hat d):= \omega\left(\TDiag(\bar d,\hat
d)^TW\TDiag(\bar d,\hat d)\right)$}.

\end{theorem}
\begin{proof}
First we note, since the $2\times 2$ principal minors for $W\succ 0$ are
all positive, the definitions of the optimal $d^*$ are well defined.
Let $\bar d\in \R^n_{++}$ and $\hat d\in\R^{n-1}$. Define the  
$\omega$-condition number, $f$ and $g$ as  functions of a pair $(\bar d,
\hat d) \in\R_{++}^n\times\R^{n-1}$. This is
\[
\omega(\bar d, \hat d) = \frac{f (\bar d,\hat d)}{g (d,\hat d)} := \frac{\tr\bigl(  \TDiag(\bar d,\hat d)^TW \TDiag(\bar
d,\hat d)\bigr)/n}{ \det (W)^{1/n}  \prod_{i=1}^n(\bar d_i)^{2/n} }.
\]
Differentiating the pseudoconvex $\omega$ and equating to $0$, we get the optimality condition 
	\begin{equation}\label{eq:2diagsoptcond}
		\left(\tdiag W\TDiag\right)(\bar d,\hat d) = \begin{pmatrix} \bar
			d^{-1} \cr 0_{n-1}\end{pmatrix}
	\end{equation}
Solving~\Cref{eq:2diagsoptcond} for $(\bar d, \hat d)$, results in 
	\[
	\bar d_i = \begin{cases}
		\left(W_{i,i}-\frac{W_{i,i+1}^2}{W_{i+1,i+1}}\right)^{-1/2}
		= \left(\frac{W_{i,i}W_{i+1,i+1}-W_{i,i+1}^2}{W_{i+1,i+1}}\right)^{-1/2}, & \text{if }i \in [n-1];\\
		W_{n,n} ^{-1/2}, & \text{if }i=n;
	\end{cases}
	\]
	and
	\[
\hat d_i  = -\frac{W_{i,i+1}}{W_{i+1,i+1}}\bar d_i,\quad i \in[n-1].
	\]

\end{proof}

\subsection{Upper Triangular $D_{+k}$ Diagonal Preconditioning}
\label{sect:optDponeprecondij}
We note that the $\omega$-optimal lower triangular two diagonal
preconditioner in~\Cref{thm:Tdiags} is sparse but its 
inverse though still lower triangular is not necessarily as sparse, 
i.e.,~the two diagonal structure can be lost completely, sparsity can be lost. 
We now consider the diagonal with upper
triangular elements that maintain the same structure in the inverse,
i.e.,~maintain sparsity for the inverse. Recall that the triangular number $t(k) = k(k+1)/2$ and define the transformation \textdef{$D_{+k}:\R^{n+t(k)} \to \R^{n\times n}$}:
\begin{equation}\label{eq:Dk}
	\begin{array}{rcl}
		D_{+k}(d,\alpha) &=&  \Diag(d) + 
		\begin{bmatrix}
			\begin{bmatrix}
				0_{n\times n\!-\!k}
			\end{bmatrix}
			&|&
			\begin{bmatrix}
				\begin{bmatrix}
					\Triu(\alpha)
				\end{bmatrix} \cr
				\begin{bmatrix}
					0_{n\!-\!k\times k}
				\end{bmatrix} 
			\end{bmatrix}
		\end{bmatrix}
		\vspace{.1in}
		\\ &=&
		\Diag(d) + \Triuk(\alpha) 
		=  \begin{bmatrix} \Diag & \Triuk\end{bmatrix}  
		\begin{pmatrix} d \cr  \alpha\end{pmatrix}
		\vspace{.1in}
		\\&=&
		\left(\begin{array}{ccccccccc}
			d_1 & 0 & \ldots & 0 & \ldots & \alpha_{1, n\!-\!k+1} & \alpha_{1, n\!-\!k+2} & \ldots & \alpha_{1, n}\\
			0 & d_2 & \ldots & 0 & \ldots & 0 & \alpha_{2, n\!-\!k+2} & \ldots & \alpha_{2, n}\\
			\vdots & \vdots & \ddots & \vdots & \ldots & \vdots & \vdots & \ddots & \vdots\\
			0 & 0 & \ldots & d_{k} & \ldots & 0 & 0 & 0 & \alpha_{k,n}\\
			\vdots & \vdots & \vdots & \vdots & \ddots & \vdots & \vdots & \vdots & \vdots\\
			0 & 0 & \ldots & 0 & \ldots & d_{n\!-\!k+1} & 0 & 0 & 0\\
			0 & 0 & \ldots & 0 & \ldots & 0 & d_{n\!-\!k+2} & 0 & 0\\
			\vdots & \vdots & \vdots & \vdots & \ldots & \vdots & \vdots & \ddots & \vdots\\
			0 & 0 & \ldots & 0 & \ldots & 0 & 0 & 0 & d_{n}
		\end{array}\right)
	\end{array}
\end{equation}
where $d \in \R^n$ and $\alpha:= (\alpha_{1, n\!-\!k+1}, \alpha_{1, n\!-\!k+2}, \alpha_{2, n\!-\!k+2}, \dots,\alpha_{1, n},\dots,\alpha_{k,n})^T \in \R^{t(k)}$.
Then the optimal upper triangular $D_{+k}(d,\alpha)$ diagonal preconditioner is given by solving the following optimization problem:
\begin{equation}
	\label{eq:uppertriag2diag}
(\bar d,\bar \alpha):= \argmin_{(d,\alpha)\in\R_{++}^n\times\R^{t(k)}}   
          \omega \big(D_{+k}(d,\alpha)^TWD_{+k}(d,\alpha)\big).
\end{equation}
\begin{theorem}\label{thm:Dplusk}
	Let $W\in\Snpp$ be given and let $(\bar d,\bar \alpha)\in \R^{n+{t(k)}}$ such that
	\begin{equation}\label{eq:diagD+k}
		\bar d_i = W_{i,i}^{-1/2},\quad  i\in[n-k]
	\end{equation}
	and the following hold for each $i\in [n-k+1,n]$:
	\begin{equation}\label{eq:optD+k}
		\begin{array}{rcll}
			W_{i,i}\bar d_i + \sum_{\ell=1}^{i-n+k}\bar \alpha_{\ell, i}W_{\ell,i} &=& 1/\bar d_i,\\
			W_{i,j}\bar d_i + \sum_{\ell=1}^{i-n+k}\bar \alpha_{\ell, i}W_{\ell,j} &=& 0,&j \in [i-n+k].
		\end{array}
	\end{equation}
	Then, $(\bar d, \bar \alpha)$ is the optimal solution of~\cref{eq:uppertriag2diag}.
\end{theorem}
\begin{proof}
Define the transformations (isometries)
\textdef{$\Triu : \R^{t(k)} \to \R^{k\times k}$} 
and 
\textdef{$\Triuk : \R^{t(k)} \to \R^{n\times n}$} according to \cref{eq:Dk}.
We denote the adjoints by \textdef{$\triu$} and
\textdef{$\triuk$}, respectively, and note that
\[
\triu^\dagger = \triu^*,\, \Triu^\dagger = \Triu^*.
\]
Hence,
\[
\begin{array}{rcl}
	D_{+k}(d,\alpha) &=& \Diag(d) + \Triuk(\alpha)\\
	&=& \begin{bmatrix} 
		\Diag & \Triuk 
	\end{bmatrix}\begin{pmatrix}
	d\\ \alpha
	\end{pmatrix}.
\end{array}
\]
Denote
\[
\begin{array}{rcl}
	\omega_k(d,\alpha) &:=&
	\omega\big(D_{+k}(d,\alpha)^TWD_{+k}(d,\alpha)\big)\\
	&=& \frac{\trace\big(D_{+k}(d,\alpha)^TWD_{+k}(d,\alpha)\big)/n}{\det\big(D_{+k}(d,\alpha)^TWD_{+k}(d,\alpha)\big)^{1/n}}\\
	&=& \frac{\trace\big(D_{+k}(d,\alpha)^TWD_{+k}(d,\alpha)\big)}{\det( W)^{1/n}\prod_{i=1}^{n}d_i^{2/n}}.
\end{array}
\]
For the numerator of $\omega_k$ we use
\[
\begin{array}{rcl}
	f(d,\alpha) 
	&:= & \frac{1}{n} \trace\big(D_{+k}(d,\alpha)^TWD_{+k}(d,\alpha)\big)\\
	&=& \frac{1}{n}
	\big\langle D_{+k}(d,\alpha), WD_{+k}(d,\alpha) \big\rangle\\
	&=& \frac{1}{n}
	\left\langle \begin{pmatrix}d \cr \alpha \end{pmatrix},  D_{+k}^* \big( W  D_{+k}(d,\alpha)\big) \right\rangle\\
	&=& \frac{1}{n}
	\begin{pmatrix}d \cr \alpha \end{pmatrix}^T  D_{+k}^* \big( W  D_{+k}(d,\alpha)\big)
	\\&=& \frac{1}{n}
	\begin{pmatrix}d \cr \alpha \end{pmatrix}^T \begin{bmatrix} \diag \cr \triuk\end{bmatrix} \big( W  D_{+k}(d,\alpha)\big)
	\\&=& \frac{1}{n}
	\begin{pmatrix}d \cr \alpha \end{pmatrix}^T \begin{bmatrix} 
		\diag W \big(\Diag(d) +\Triuk(\alpha) \big)
		\cr \triuk W \big(\Diag(d) +\Triuk(\alpha) \big)
	\end{bmatrix}
	\\&=& \frac{1}{n}
	\begin{pmatrix}d \cr \alpha \end{pmatrix}^T \begin{bmatrix} 
		\diag W \Diag &\diag W \Triuk  \cr
		\triuk W \Diag &\triuk W\Triuk
	\end{bmatrix}
	\begin{pmatrix}d \cr \alpha \end{pmatrix}.
\end{array}
\]
and the gradient is therefore
\begin{equation*}
	\nabla f(d,\alpha) 
	=
	\frac{2}{n} \begin{bmatrix} 
		\diag W \Diag &\diag W \Triuk  \cr
		\triuk W \Diag &\triuk W\Triuk
	\end{bmatrix}
	\begin{pmatrix}d \cr \alpha \end{pmatrix}.
\end{equation*}
The denominator of $\omega_k$ is
\[
g(d, \alpha):= \det(W)^{1/n}\prod_{i=1}^n d_i^{2/n}
\]
and thus
\[
\nabla g(d, \alpha) =
\frac 2n g(d, \alpha) \begin{pmatrix} 
	1/ d_1 \\ 1/ d_2 \\ \vdots \\ 1/ d_n
	\cr
	0 \\ \vdots \\0
\end{pmatrix}.
\]
For simplicity, denote $\bar d^{-1}:= (1/\bar d_1, 1/\bar d_2, \dots, 1/\bar d_n)^T\in \R^{n}$. Then,
\[
\begin{array}{rcl}
	\nabla \omega_k(d,\alpha) 
	&=& 
	\frac 1{g(d,\alpha)^2}  
	\big(g(d,\alpha) \nabla f(d,\alpha) - f(d,\alpha) \nabla g(d,\alpha)\big)
	\\&=&
	\frac {1}{g(d,\alpha)}  
	\left(\nabla f(d,\alpha) - \frac 2n f(d,\alpha) \begin{pmatrix}
		d^{-1} \cr 0_{t(k)}\end{pmatrix}
	\right).
\end{array}
\]
Finally, the proof follows from noticing that
\[
\begin{array}{rcl}
	(\bar d, \bar \alpha)\text{ satisfies~\Cref{eq:diagD+k,eq:optD+k}} &\iff& \frac{n}{2}\nabla f(\bar d, \bar \alpha)=\begin{pmatrix}
		\bar d^{-1}\cr 0_{t(k)}
	\end{pmatrix}\cr
	&\implies& f(\bar d, \bar \alpha) = 1.
\end{array}
\]
Hence, \Cref{eq:diagD+k,eq:optD+k} implies $\nabla \omega_k(\bar d,\bar \alpha)=0$, i.e., $(\bar d, \bar \alpha)$ is optimal.

\end{proof}

The following~\Cref{ex:D+1} and \Cref{ex:D+2} solve~\Cref{eq:optD+k} for $k=1$ and $k=2$.
\begin{example}[$k=1$]
	\label{ex:D+1}
	Let $W \in \Snpp$ be given. Set
	\[
	\bar d_i = 
	\begin{cases}
		W_{i,i}^{-1/2},& \text{if }i \in [n-1]\\
		\left(\frac{W_{1,1}W_{n,n}-W_{1,n}^2}{W_{1,1}}\right)^{-1/2},&\text{if }i=n.
	\end{cases}
	\]
	and
	\[
	\bar \alpha = 
	-\frac{W_{1n}}{W_{11}}\bar d_n.
	\]
	Then the optimal $D_{+1}$-diagonal upper triangular scaling is given by
	\begin{equation*}
		(\bar d,\bar \alpha) = \displaystyle \argmin_{ d\in
\R_{++}^n, \alpha \in \R}
		\omega \big(D_{+1}(d,\alpha)^TWD_{+1}(d,\alpha)\big).
	\end{equation*}
\end{example}

\begin{example}[$k=2$]
	\label{ex:D+2}
	Let $W \in \Snpp$ be given. Set
	\[
	\bar d_i = 
	\begin{cases}
		W_{i,i}^{-1/2},& \text{if }i \in [n-2]\\
		\left(\frac{W_{1,1}W_{n-1,n-1}-W_{1,n-1}^2}{W_{1,1}}\right)^{-1/2},&\text{if }i=n-1\\
		\left(W_{n,n} + \frac{W_{1,n}^2W_{2,2} - 2W_{1,n}W_{2,n}W_{1,2} + W_{2,n}^2W_{1,1}}{W_{1,2}^2 - W_{1,1}W_{2,2}}\right)^{-1/2},&\text{if }i=n.
	\end{cases}
	\]
	\[
	\begin{array}{rcl}
		\bar \alpha_{1,n} &=& \left(\frac{W_{1,n}W_{2,2}-W_{1,2}W_{2,n}}{W_{1,2}^2-W_{1,1}W_{2,2}}\right)\bar d_n,\\
		\bar\alpha_{1, n-1} &=& -\frac{W_{1,n-1}}{W_{1,1}}\bar d_{n-1},\\
		\bar \alpha_{2,n} &=& \left(\frac{W_{1,1}W_{2,n}-W_{1,2}W_{1,n}}{W_{1,2}^2-W_{1,1}W_{2,2}}\right)\bar d_n.
	\end{array}
	\]
	Then the optimal $D_{+2}$-diagonal upper triangular scaling is given by
	\begin{equation*}
		(\bar d,\bar \alpha) = \displaystyle \argmin_{ d\in
\R_{++}^n, \alpha \in \R^3}
		\omega \big(D_{+2}(d,\alpha)^TWD_{+2}(d,\alpha)\big).
	\end{equation*}
\end{example}

\cleardoublepage
\phantomsection
\addcontentsline{toc}{section}{Index}
\printindex
\label{ind:index}

\cleardoublepage
\phantomsection
\addcontentsline{toc}{section}{Bibliography}
\bibliographystyle{siam}
\bibliography{.master,.edm,.psd,.bjorBOOK,davidt,leo}
\label{bib:bibl}

\end{document}